\def\cal{\mathcal}
\newtheorem{theorem}{Theorem}
\newtheorem{lemma}{Lemma}
\newtheorem{conjecture}{Conjecture}
\def\ra{\rightarrow}
\def\qed{\hfill \mbox{\rule{0.5em}{0.5em}}}
\def\BP{{\Bbb P}}
\def\BR{{\Bbb R}}
\def\BN{{\Bbb N}}
\def\BE{{\Bbb E}}
\def\BZ{{\Bbb Z}}
\def\bone{\mbox{\bf 1}}
\def\p{\BP}
\def\e{\BE \,}
\def\ll{\log \! \log}
\def\l3{\ \log \! \log \! \log}
\def\lg4{\ \log \! \log \! \log \! \log}
\def\L{{\cal L}}
\def\X{{\cal X}}
\def\indist{\stackrel{\mbox{\small d}}{=}}
\def\Ak{A^{(k)}}
\newcommand{\ignore}[1]{}
\begin{document}
\title[Prime Factorization of a Uniform Random Integer]
{On the Amount of Dependence in the Prime Factorization
of a Uniform Random Integer}

\author[Richard Arratia]{Richard Arratia}
\address[Richard Arratia]{Univ.\ of Southern California \\
Department of Mathematics \\
Los Angeles CA 90089-1113}

\email{rarratia@math.usc.edu}

\thanks{
Lectures given June 26,27,29,30, 1998, at the
workshop on Probabilistic Combinatorics at the
Paul Erd\H{o}s Summer Research Center of Mathematics}

\date{March 18, 1999; updated September 4, 2000}
\maketitle

\begin{abstract}
How much dependence is there in the prime factorization of a
random integer distributed uniformly from 1 to $n$? How much
dependence is there in the decomposition into cycles of a random
permutation of $n$ points?  What is the relation between the
Poisson-Dirichlet process and the scale invariant Poisson process?
These three questions have essentially the same answers, with
respect to total variation distance, considering only small
components, and with respect to a Wasserstein distance,
considering all components.  The Wasserstein distance is the
expected number of changes -- insertions and deletions -- needed
to change the dependent system into an independent system.

In particular we show that for primes,
roughly speaking,
$2+o(1)$ changes
are necessary
and sufficient to convert a uniformly distributed random integer
 from 1 to $n$ into a random integer $\prod_{p \leq n} p^{Z_p} $
in which the multiplicity $Z_p$ of the factor $p$ is geometrically
distributed, with all $Z_p$ independent.
The changes are, with probability tending to 1, one deletion,
together with
a random number of insertions, having expectation $1+o(1)$.

The crucial tool for showing that $2+\epsilon$ suffices is a
coupling of the infinite independent  model of prime
multiplicities, with the scale invariant Poisson process on
$(0,\infty)$.
A corollary of this construction is the first metric bound on the
distance to the Poisson-Dirichlet in Billingsley's 1972 weak
convergence result.  Our bound takes the form:  there are
couplings in which
$$
\e \sum |\log P_i(n) - (\log n) V_i | = O( \log \log n),
$$
where $P_i$ denotes the $i^{th}$ largest prime factor and $V_i$
denotes the $i^{th}$ component of the Poisson-Dirichlet process.
It is reasonable to conjecture that $O(1)$ is achievable.
\end{abstract}

\tableofcontents

For the reader impatient to get to business, the main results are
given by
the bound
(\ref{dw upper claim}) in Theorem \ref{dw upper theorem},
and
the bound
(\ref{pd distance 2}) in Theorem \ref{thm for PD}.
A $ \$ 500 $ conjecture, related to Theorem  \ref{dw upper theorem},
is given by relation (\ref{500 question}), and a $ \$ 100 $
conjecture, related to Theorem \ref{thm for PD}, is given by
the bound (\ref{pd distance 1}).

\section{Lecture 1.  Growing a random integer}

I would like to thank the J\'anos Bolyai Mathematical Society and
the organizers of this conference for the honor of speaking here,
where the spirit of Erd\H{o}s seems so close.  At the conference
reception last night Imre Csiszar, student of R\'enyi, suggested that
Erd\H{o}s, now in heaven, has read the book where all best
proofs are given.  But I  prefer to believe that in heaven,
Erd\H{o}s by choice does not ask to see the book; he only asks of a proof,
``Is there an even better one in the book?''  And  he
watches
us fellow mathematicians here on earth, as we lecture and write and
discover; by not diving into the book, he can continue to compete
against us.  For the joy of discovery through one's own effort is
far more rewarding than even reading the book.
I would also like to thank my collaborators, Andrew D. Barbour and
Simon Tavar\'e, as their work and ideas pervade these lectures.
I hope that our
book on logarithmic combinatorial structures \cite{abt}, in preparation
since 1992, will soon see publication!

The guiding question for today's lecture is to ask, by analogy with
the Erd\H{o}s-R\'enyi notion of {\em growing} a random graph,
``can we grow a random integer?''
For guidance, we compare with the simpler task of ``growing'' a
random permutation.

NOTATION:
$n$ is always the parameter, rather than the random object.  We
consider

\hspace{1in} -- a permutation chosen uniformly from ${\cal S}_n$

\hspace{1in} -- an integer $N$ chosen uniformly from 1 to $n$.

We may emphasize the role of the parameter $n$ by writing it
explicitly:
$$
\p_n(N=i) \equiv \p(N(n)=i) \  = 1/n \ \mbox{ for } i=1,2,\ldots,n.
$$
For the prime factorization we write
$$
N(n) = \prod p^{C_p(n)}, \mbox{ so } (C_p(n))_p =
(C_2(n),C_3(n),C_5(n),\ldots)
$$
is a dependent process, with $C_p(n)$ identically zero if $p>n$.

The baby fact: as $n \rightarrow \infty$
\begin{equation}
\label{primes limit}
(C_p(n))_p \Rightarrow (Z_p)_p = (Z_2,Z_3,Z_5,\ldots)
\end{equation}
with independent, geometrically distributed coordinates,  with
$\p(Z_p \geq~k)=p^{-k}$.

\noindent{\bf Proof.}  Given $j$ distinct primes $p_1,\ldots,p_j$,
and integers $i_1,\ldots,i_j\geq 0$, write
$d=p_1^{i_i}\cdots p_j^{i_j}$ so that as events,
$$
\{ C_{p_1} \geq i_1,\ldots, C_{p_j}\geq i_j \}
  = \{  \ d | N \ \}.
$$
Thus
\begin{equation}
\label{eq1.1}
\p_n( \ d|N\ ) = \frac{1}{n} \big\lfloor \frac{n}{d} \big\rfloor \
 \rightarrow \frac{1}{d} = \p(Z_{p_1} \geq i_1,\ldots,Z_{p_j} \geq
 i_j),
\end{equation}
and $j$-dimensional differencing yields
\begin{equation}
\label{eq1.2}
\p_n(C_{p_1}=i_1,\ldots,C_{p_j}=i_j) \ \rightarrow
\p(Z_{p_1}=i_1,\ldots,Z_{p_j}=i_j).
\end{equation}\hfill \qed

In fact, the approximation error in (\ref{eq1.1}) is at most
$1/n$, so the error
in (\ref{eq1.2}) is at most $2^j/n$ --- a crude upper bound which
comes from taking the absolute value inside.
The sum over all integers  $d = \prod_{p \leq b} p^{a_p}$ whose largest prime
factor is at most $b$, i.e.
\begin{equation}\label{dtv def 1}
\sum_{d:\, P^+\!\!(d) \leq b}
|\  \p(C_p(n)=a_p \ \forall p \leq b) -\p(Z_p=a_p \ \forall p \leq
b)|,
\end{equation}
gives the total variation distance $d_{TV}(b,n)$, restricting to primes not
exceeding $b$, and surprisingly, the crude upper bound $u(b,n)$, formed by  taking
absolute values inside, gives pretty good information.  But the
information is not nearly as good as the result of Kubilius; we will
describe these bounds later, in sections \ref{dtv section} and
\ref{absolute section}. \vspace{1pc}

Since $\e Z_p = \frac{1}{p-1}$ with  $\sum \e Z_p =
\infty$, it follows from the independence of the $Z_p$ that
$$
1=\p\left( \sum Z_p = \infty\right).
$$
Thus we call the multiset, having $Z_p$ copies of $p$ for each
prime, the \emph{natural random infinite multiset of primes}.  The
guiding
question for today's lecture, can we grow random integers, is now
stated more precisely as:  can we construct $N(1),N(2),\ldots,N(n),\ldots$
all on a single probability space, together with the natural
random infinite multiset of primes, so that the $C_p(n)$
{\em evolve smoothly}, with $C_p(n) \rightarrow Z_p$
as $n \rightarrow \infty$.  The answer is yes in a sense; the construction
culminating with \eqref{construct N} at the end of Lecture 3 has
$C_p(n) \rightarrow Z_p$ \emph{in probability}, but with probability one,
$\liminf C_p(n)=Z_p$ and $\limsup C_p(n)=Z_p+1$.

\subsection{Overview: the limits for primes, small and large}

This section presents the material from one of the two
transparencies which were shown repeatedly throughout the lectures;
the material from the other transparency  appears in this writeup
as Conjecture \ref{500 conjecture} in section \ref{score} and
as Conjecture \ref{100 conjecture} in section \ref{distance}.

$N(n)$ is uniform 1 to $n$, with \hfill to focus on---

$N(n) = \prod p^{C_p(n)}$ \hfill   \emph{---small factors},

$N(n) = P_1(n)P_2(n)\cdots, P_1\geq P_2 \geq \cdots$, prime or
1, \hfill  \emph{---large factors}
\vskip 1pc

\noindent{\bf LIMITS} in distribution as $n \rightarrow \infty$

$(C_p(n))_p \Rightarrow (Z_p)_p,$ \hfill
  independent Geometric

$$\left({\log P_1(n) \over \log n},{ \log P_2(n) \over \log n} ,\ldots \right)
\Rightarrow (V_1,V_2,\ldots), \ \ \ \ \ \mbox{Poisson-Dirichlet}
$$
proved by  Billingsley in 1972,
where the limit is now known as the Poisson-Dirichlet process,
with parameter 1.
\vskip 1pc

\noindent{\bf HOW CLOSE?}  One coupling has
\begin{equation}\label{summary a}
\e \sum_{p \leq n} |C_p(n)-Z_p| \ \leq 2 + O\left( \frac{(\log
\log n)^2}{\log n} \right),
\end{equation}
\begin{equation}\label{summary b}
  \e \sum |\log P_i(n) -(V_i) (\log n)  | = O(\log \log n).
\end{equation}
We note that in (\ref{summary a}), $2-\epsilon$ is not possible,
while in (\ref{summary b}), $O(1)$ should be possible.
The reason that $\ll n$ appears in our bound for
(\ref{summary b}) is made clear by equation
(\ref{log log asymptotics}) in conjunction with the
proof of Theorem \ref{thm for PD}.

\subsection{Sketch of the coupling to grow $N(n)$}\label{sketch}

Take a size biased permutation, say $Q_1,Q_2,\ldots,$
of the prime factors in $2^{Z_2}3^{Z_3}\cdots$\ .

Let $J(n) =$ the largest partial product $Q_1Q_2\cdots Q_j \leq n$.
Write $L \equiv L(n)$ for the number of factors, so that $J(n)=Q_1 Q_2 \cdots
Q_{L(n)}$.

Show: $J(n)$ has approximately the same distribution as $H(n)$, where by definition
$H(n)$ has the harmonic distribution on $[n]$,
\begin{equation}
\label{harmonic}
\p(H(n)=i) = \ \frac{1/i}{1+ \ 1/2 + \cdots + 1/n} \ = \frac{1}{i \, h_n}, \ \
i=1,2,\ldots,n.
\end{equation}

Fill in one extra factor, $P_0(n)$, to be prime or one.  Use a
random uniform $U \in (0,1]$ to choose
uniformly from the $1+\pi(n/J(n))$ possibilities with
$J(n) P_0(n) \leq n$.

Show that the resulting random integer, $J P_0 \equiv J(n) P_0(n)$
is close to uniform.  In fact, from Lemma \ref{lemma j bound}
\begin{equation}
\label{harmonic bound}
d_{TV}(J(n),H(n) \, ) = O \left( \frac{1}{\log n}\right),
\end{equation}
and from Lemma \ref{lemma jp bound}
\begin{equation}
\label{JP versus N}
d_{TV}(J(n)P_0(n),N(n) \, ) = O \left( \frac{\log \log n}{\log n}\right).
\end{equation}

Finally, modify the coupling on the event whose probability is the
left side of (\ref{JP versus N}), so that the modified versions of
$J P_0$ are exactly uniform, for all $n$.

Noga Alon asked, ``Does $\p(J(n)P_0(n)=1) \sim 1/n$?  The total
variation distance bound (\ref{JP versus N}) does not determine
the
answer.  Since $\p(P_0(n)=1 \ | \ J(n)=1) =
 1/(1+\pi(n)) \sim \log n / n$,
Alon's question is equivalent to, ``Does $\p(J(n)=1) \sim 1/\log
n$?  Now if $Z_p=0$ for all $p \leq n$ then we must have $J(n)=1$,
and $\p(Z_p=0 \ \forall p \leq n)$
$=\prod_{p \leq n} (1- \, 1/p)$
$\sim e^{-\gamma}/\log n$ by Mertens' Theorem.  Thus Alon's question is
equivalent to asking:  does $(1-e^{-\gamma})/\log n$ give the
asymptotic  probability that \emph{there are} one or
more primes less than or equal to $n$ in the infinite multiset
{ \em and, in the size biased permutation, }
some prime greater than $n$ comes before all of them.
The answer, which I did not give at the workshop, but was
more or less
evident from (\ref{compare harmonic}) and (\ref{simpler 1}),
is yes; see Lemma \ref{lemma j bound}  for further details.  While this
affirmative answer might give us hope that $J(n)P_0(n)$ is close
to uniform in the sense that for all $i\leq n$,
$\p(J(n)P_0(n)=i) \sim 1/n$, it is clearly not so.
In fact, (for the coupling in Lecture 3, which is slightly different
from the coupling described in this lecture,)
for fixed $i$, having $\omega(i)$ distinct prime factors, as
$n \rightarrow \infty$, $\p(J(n)P_0(n)=i) \sim (1+\omega(i))/n$;
see (\ref{ph exact}) and (\ref{ph simple}) for details.
Thus $i=1$ is the only fixed integer
having the correct asymptotic probability!

\vskip 2pc
\noindent
How does $J(n)P_0(n)$ evolve as $n$ grows?  Write
$p^{\#}$ for the smallest prime larger than $p$,
with $1^{\#}=2$.
The following properties
hold for all $n>1$ and for all outcomes.

$J(n) P_0(n) \in [1,n]$.

\noindent Each factor has its
own smoothness:

 $J(n)/ J(n-1)$ is one or a prime,

 $P_0(n) = P_0(n-1), P_0(n-1)^{\#} $, or 1,

\noindent and the jumps in the two factors are linked:

$J(n) \neq J(n-1)$ implies
  ( $J(n)=n$ and  $P_0(n)=1$),

$P_0(n)<P_0(n-1)$ implies
  ( $J(n)=n$ and  $P_0(n)=1$).

\subsection{Size biased permutations}\label{size bias section}

Several in the audience requested clarification:  what is
a size biased permutation?  Given $k$ objects, with ``sizes'' or
``weights''
$r_1,r_2,\ldots,r_k>0$, we can carry out
a size biased permutation using $k$ independent standard exponentially
distributed ``alarm clocks'' $S_1,S_2,\ldots,S_k$  with $\p(S_i > t)
= e^{-t}$ for all $t>0$.
The labels $W_i := S_i/r_i$ are exponentially distributed with
$\p(W_i > t) = e^{-r_i t}$, (we say ``rate $r_i$'' or ``mean $1/r_i$,'')
and the labels are independent, with
$\p(W_i=W_j)=0$ for every $i \neq j$.  The ranking of the labels
induces a size biased permutation of the objects.  Observe that
 $\p(W_i$ is the smallest among $W_1,W_2,\ldots,W_k)$
$= r_i/(r_1+r_2+\cdots+r_k)$, which is a familiar fact from the
study of finite-state Markov chains in continuous time,
where the $r_i$ are jump rates.  This calculation
shows the distribution of the first item; iterating and
using the memoryless property of exponentials gives a
product formula for the distribution of the full size biased permutation.

For primes,
the size of $p$ is $\log p$, and such a size biased permutation was used
in the 1984 Ph.D. thesis of Eric  Bach
\cite{bach}
to generate uniformly distributed random integers, factored into
primes,
and independently by
Donnelly and Grimmett in 1993
\cite{DG}
to give a simple proof of Billingsley's Poisson-Dirichlet limit
for prime factors -- discussed at (\ref{Billingsley pd}) below.
In our context, the infinite random multiset of primes, there
are infinitely many labels, but with probability one, the only
limit point is zero, and there is a largest label.  Our
size biased permutation starts with the prime having this largest
label; listing the labels from largest downward tends to put smaller
primes toward the front of the list.

\subsection{An example of the growth of an integer}

Take for example an outcome of
the experiment with
\begin{equation}
\label{prime example counts}
Z_2=3,Z_3=1,Z_5=0,Z_7=1,Z_{11}=1,\ldots
\end{equation}
and size biased permutation
\begin{equation}
\label{prime example}
   Q_1,Q_2,\ldots,Q_6,\ldots = 3,2,2,11,2,7,\ldots.
\end{equation}

The first seven partial products, one through
$3 \cdot 2 \cdot 2 \cdot 11 \cdot 2 \cdot 7$,
shown on a logarithmic scale, are

\begin{picture}(500,50)(-20,0)
\thicklines
\put(0,25){\line(1,0){320}}
\put(0,25){\circle*{5}}
\put(32.96,25){\circle*{5}}
\put(53.75,25){\circle*{5}}
\put(74.54,25){\circle*{5}}
\put(146.48,25){\circle*{5}}
\put(167.27,25){\circle*{5}}
\put(225.65,25){\circle*{5}}
\put(302.6,35){\vector(0,-1){10}}

\put(0,15){\makebox(0,0){1}}
\put(32.96,15){\makebox(0,0){3}}
\put(53.75,15){\makebox(0,0){6}}
\put(74.54,15){\makebox(0,0){12}}
\put(146.48,15){\makebox(0,0){132}}
\put(167.27,15){\makebox(0,0){264}}
\put(225.65,15){\makebox(0,0){1848}}
\put(302.6,15){\makebox(0,0){24024}}
\end{picture}

The arrow pointing to 24024 = 1848 13 is to show that the next
partial product will lie on or to the right of this location.
We know this because the information $6=\sum_{p < 13} Z_p $,
together with the good luck that the first six primes in the size
biased permutation are less than 13, implies that the seventh
prime in the size biased permutation will be 13 or greater. In
particular,
for $1848 \leq n < 13 \times 1848,$   we know that $J(n)=1848$, even though we
don't know the seventh prime in the size biased permutation, accounting for
the last line of the table below.

The jumps in $J(n)$ are shown by double horizontal lines in the
two tables below.

\begin{flushleft}

\hspace{.6in}\begin{tabular}{|c|c|c|c|}
\hline
  $n$ & $J(n)$ & $1+\pi(n/J(n))$ & $P_0(n)$ \\ \hline \hline
1 & 1 & 1 &  1 \\ \hline
2 & 1 & 2 & 1 if $U\leq.5$, 2 if $U>.5$ \\ \hline \hline
3,4,5  & 3 & 1 & 1 \\ \hline \hline
6 to 11 & 6 & 1 & 1 \\ \hline \hline
12 to 23 & 12 & 1 & 1 \\ \hline
24 to 35 & 12 & 2 & 1 if $U\leq.5, 2 $ if $U>.5$ \\ \hline
36 to 59 & 12 & 3 & 1 if $U\leq1/3, \ldots, 3$ if $U>2/3$ \\ \hline
60 to 83 & 12 & 4 & 1 if $U\leq1/4, \ldots, 5$ if $U>3/4$ \\ \hline
84 to 131 & 12 & 5 & 1,2,3,5, or 7 \\ \hline \hline
132 to 263 & 132 & 1 & 1 \\ \hline \hline
$[264,2 \cdot 264)$ & 264 & 1 & 1 \\ \hline
$[2 \cdot 264, 3 \cdot 264)$ & 264 & 2 &  1 if $U\leq.5, 2 $ if $U>.5$ \\ \hline
$[3 \cdot 264, 5 \cdot 264)$ & 264 & 3 &1 if $U\leq1/3, \ldots, 3 $ if $U>2/3$ \\  \hline
$[1320,1848)$ & 264 & 4 & 1, 2, 3, or 5 \\ \hline \hline
$[1848,13 \cdot 1848)$ & 1848 & 1,2,3,4,5, or 6 & 1,2,3,5,7, or 11\\ \hline
\end{tabular}
\end{flushleft}
\vspace{2ex}

To continue the above example, we will consider three cases: the
first being $U \in (.5,.6]$, the next being $U \in (.6,2/3]$, and
the last being $U > 5/6$.

\begin{center}
\begin{tabular}{|c || c | c || c|c ||c| c |} \hline
 &  \multicolumn{2}{c|}{ $.5 < U \leq .6$}
 &  \multicolumn{2}{c|}{ $.6 < U \leq 2/3 $}
 &  \multicolumn{2}{c|}{ $5/6 < U$} \\ \hline
n & $P_0(n)$ & $J(n)P_0(n)$& $P_0(n)$ & $J(n)P_0(n)$&
$P_0(n)$ & $J(n)P_0(n)$  \\ \hline

1 & 1 & 1 & 1 & 1 & 1 & 1 \\ \hline
2 & 2 & 2 & 2 & 2 & 2 & 2 \\ \hline \hline
3,4,5 & 1 & 3 & 1 & 3 & 1 & 3 \\ \hline \hline
[6,11) & 1 & 6 & 1 & 6 & 1 & 6 \\ \hline \hline
[12,23) & 1 & 12 & 1 & 12 & 1 & 12 \\ \hline
[24,35) & 2 & 24 & 2 & 24 & 2 & 24 \\ \hline
[36,59) & 2 & 24 & 2 & 24 & 3 & 36 \\ \hline
[60,83) & 3 & 36 & 3 & 36 & 5 & 60 \\ \hline
[84,131) & 3 & 36 & 5 & 60 & 7 & 84 \\ \hline \hline
[132,263) & 1 & 132 & 1 & 132 & 1 & 132 \\ \hline \hline
[264,528) & 1 & 264 & 1 & 264 & 1 & 264 \\ \hline
[528, 792) & 2 & 528 & 2 & 528 & 2 & 528 \\ \hline
[792, 1320) & 2 & 528 & 2 & 528 & 3 & 792 \\ \hline
[1320,1848) & 3 & 792 & 3 & 792 & 5 & 1320 \\ \hline \hline

\end{tabular}
\end{center}

Recall that
$d_{TV}(J(n)P_0(n),N(n))\rightarrow 0$, but the random integer we
grow is {\em not exactly uniform}.  For comparison, random
permutations have similar behavior and can be grown exactly.
Eric Bach's procedure gets an {\em exactly uniform} random
integer,
but not with $n$ {\em evolving}.

\subsection{Notions of distance}

\subsubsection{\bf The expected number of insertions and deletions
needed}\label{indel section}

The material below on $d_W$ was delivered at the workshop at the start of
Lecture 2; the material on $d_{TV}$ was drawn out in workshop conversations
with Joel Spencer, and I prepared a transparency for the lecture
but did not deliver it, for lack of time.

We consider the metric $d$ on positive integers
which counts the number of changes needed to convert the prime
factorization of one integer into that of the other.
For example, $d(40,500) = d(2^3 5^1,2^2 5^3)=3$, $d(8,3)=4$, and
$d(i, ip) =1$ for
any integer $i$ and prime $p$.  Writing $(i,j)$
for the greatest common divisor, and $\Omega(i) $ for the number
of prime factors, including multiplicities, we have in general
$$
d(i,j) = \Omega\left(\frac{i}{(i,j)}\right)+
\Omega\left(\frac{j}{(i,j)}\right).
$$
If we think of converting $j$ to $i$, the first term above is the
number of {\em insertions} needed, and the second term is the
number of {\em deletions}, so we think of $d$ as the
insertion/deletion distance, analogous to the string edit distance
of Levenstein \cite{levenstein} or Ulam \cite{ulam}; see
the book of Kruskal and Sankoff \cite{time warps} for more history.

For the sake of comparing the uniform random $N(n)$ with the
infinite random multiset of primes, clearly primes $p>n $ should
not be considered.  Thus, we code up the relevant part of the multiset
by {\em defining}
\begin{equation}
\label{def M}
M(n) := \prod_{p \leq n} p^{Z_p}, \ \ \ \mbox{ with }
Z_2,Z_3,\ldots \mbox{ independent geometric}.
\end{equation}Recall that
$$
N(n) = \prod_p p^{C_p(n)} = \prod_{p \leq n} p^{C_p(n)} \ \ \mbox{
is uniform 1 to } n.
$$

The insertion-deletion distance between these two random integers
is
a random, nonnegative integer
$$
d(N,M) = \Omega\left( \frac{NM}{(N,M)^2} \right) =
\sum_{p \leq n} |C_p(n) - Z_p |.
$$
The Wasserstein distance between two random objects $M$ and $N$,
for a given
metric $d$, is by definition the infimum, over all couplings, of
the expected value of $d(N,M)$.  Recall that a coupling means a
construction of $M$ and $N$ simultaneously on a single probability
space; it is understood that the {\em marginal distributions} for $M$
and for $N$ have been specified in advance, but there is no other
constraint on their {\em joint distribution}.  A compactness
argument shows that the inf is achieved; see for example
\cite{dudley}. To emphasize the role of the parameter $n$, which
determines the marginal distributions of $N(n)$ and $M(n)$, we
define
\begin{equation}\label{def dw}
d_W(n) := \min_{couplings} \e \ d(N(n),M(n)).
\end{equation}

Our result is
\begin{equation}
\label{dw=2 claim}
 \lim_{n \rightarrow \infty} d_W(n) \ \ =2.
\end{equation}
We will prove the hard part of this, that $\limsup \ d_W(n) \leq 2$,
in Lecture 3, essentially by analyzing the growth of a random
integer.  The matching lower bound, from \cite{primedw}, is that
$\liminf \ d_W(n) \geq 2$; this is relatively easy, and the reader
is challenged to discover a proof, before considering the hint
given by the paragraph following (\ref{dw upper claim}).  A related
discussion appears in section 22 of \cite{dimacs}.

For perspective on the content of (\ref{dw=2 claim}), we
note that even the bound  $d_W(n) = O(1)$ is very strong.  For
instance, by comparing $(C_p(n))_{p \leq n}$
with the independent process $(Z_p)_{p \leq n}$,
the following consequences can be derived easily: (see \cite{primedw})

$d_W(n) = o(\log \log n)$  implies the Hardy-Ramanujan Theorem for
the normal order of the number of prime divisors.

$d_W(n) = o(\sqrt{\log \log n})$  implies the Erd\H{o}s-Kac
Central Limit Theorem.

$d_W(n) = O(1)$ gives another proof
of the ``conjecture of LeVeque,'' that the error in the
Central Limit Theorem is
$O(1/\sqrt{\log \log n})$; the first proof was given by  R\'enyi and
Tur\'an in 1957 \cite{renyi turan}.

$d_W(n)=o(\l3 n)$  yields that the optimal rate in the simplest case of
the
Brownian motion convergence of  Billingsley and Philipp,
\cite{billingsley-monthly,billingsley,philipp}, for the expected
sup norm, is order of $\l3 n /\sqrt{\log \log n} $, and no smaller.
The underlying idea is that for coupling
Brownian motion with the rate one (centered) Poisson process, with both
processes
run until the variance is $t$, and \emph{without} rescaling by
$\sqrt{t}$,
the coupling distance grows on the order of $\log t$.
For primes this is applied with $t := \sum_{p \leq n} 1/p \ \sim \log \log
n$.  See
Kurtz (1978) \cite{kurtz} for the upper bound,
Rio (1994) \cite{rio2} for the lower bound, and
\cite{primedw} for the connection with primes.

\subsubsection{\bf The total variation distance}\label{dtv section}

This section gives a ``one transparency'' overview of the
situation involving total variation distance for primes.

NOTATION:  $\beta \in [0,1]$, fixed or $\beta=\beta(n) \rightarrow 0; \
u \equiv 1/\beta$.
\begin{equation}\label{dtv beta}
  d_{TV} := d_{TV}(n^\beta,n) := \ \min_{couplings}
  \p\left( \ (C_p(n))_{p \leq n^\beta} \neq (Z_p)_{p \leq n^\beta}
  \right).
\end{equation}
Kubilius \cite{kub} in the 1950's showed a) below with an upper bound
of the form $\exp(-c u)$, Barban and Vinogradov \cite{barban}
improved this to the form $\exp(-c u \log u)$, and Elliott
\cite{elliott} gave the particular constants in b) below.

a) If $\beta \rightarrow 0$ then $d_{TV} \rightarrow 0$, and

b) $d_{TV} = O\left( \exp(-\frac{1}{8} u \log u) + n^{-1/15}
\right)$.

Elliott  \cite{elliott} gives a partial converse: if $\beta \rightarrow
1$ then $d_{TV} \not\rightarrow 0$,  and it is not hard to see the full
converse, that $d_{TV} \rightarrow 0$ implies $\beta \rightarrow
0$. In Spring 1996 I stated a conjecture: that for fixed $\beta$,

c) $H(\beta) := \lim_{n \rightarrow \infty} d_{TV}$  \ exists, and is the
same as the limit for permutations.  This limit is given explicitly
as an integral in \cite{tv1}, and
proved to be the limit for permutations in Stark's
1994 PhD Thesis; see \cite{stark th}.

Later in 1996 Tenenbaum \cite{crible} improved b) to
$$
d_{TV} = O\left( \exp( - u(\log u + \log \log u -
(1+\log {2}+\epsilon))) + n^{\epsilon-1} \right)
$$
and Arratia and Stark \cite{AS} proved c).
Soon after Tenenbaum \cite{crible} also proved c), with a rate.
The limit was further identified \cite{T1} as a distance between
the restrictions to $[0,\beta]$ of two processes which will be discussed in
lectures 3 and 4,  the
Poisson-Dirichlet process with parameter 1, and the
scale invariant Poisson process with intensity $(1/x) \ dx$:
for every $\beta \in [0,1]$,
\begin{equation}\label{H exact}
H(\beta)= d_{TV} \left( \{V_i: V_i < \beta\}, \{X_i: X_i < \beta \} \right)
\end{equation}
$$
=\min_{couplings} \p \left( \{V_i: V_i < \beta\}\neq \{X_i: X_i < \beta \} \right).
$$

\subsubsection{\bf The bound from taking absolute values inside}
\label{absolute section}

The crude procedure of ``taking the absolute values inside''
described following (\ref{dtv def 1}) shows that the total variation
distance $d_{TV}(n^\beta,n)$ is at most $u(n^\beta,n)$ where
\begin{equation}\label{def u}
  u(b,n)={1 \over n} \sum_{d \geq 1:\, P^+\!(d) \leq b} \left\{ n \over
d \right\} 2^{\omega(d)}.
\end{equation}
The notation here is  $\{ x \}$ for the fractional part of $x$,
 $\omega(d)$
for the number of distinct prime factors of $d$, and
$P^+(d)$ for the largest prime factor of $d$, with
$P^+(1)=1$.
Analysis of $u(b,n)$ (see \cite{primedw}) shows that
when $b,n \rightarrow \infty$ together,
the  threshold
for whether $u(b,n)$ tends to zero or infinity
is
$\log b = (.5 \pm \epsilon) \log n \l3 n / \ll n$. In fact
if $
\log b \leq {\log n \l3 n / (c \ll n)}$ with  $c>2+a > 2$,
then $u(b,n) = o \left( (\log n)^{-a} \right),$
while if $\log b >
{\log n \l3 n  / (c \ll n)}$, for  $c < 2$,
then  $u(b,n) \ra \infty$.
While much weaker than the upper bound  of Kubilius, the
upper bound $u(b,n)$ on the total variation distance
has the virtue
that it also serves as an upper bound on the Wasserstein
distance: for all $1 \leq b \leq n$,
\begin{equation}\label{dw and dtv}
  d_{TV}(b,n) \leq d_W(b,n) \leq u(b,n),
\end{equation}
where $d_W(b,n)$ is the minimum expected number of insertions and
deletions needed to convert $\prod_{p \leq b} p^{C_p(n)}$ to
$\prod_{p \leq b} p^{Z_p}$. [The $d_W(n)$   in (\ref{def dw})
is the special case $b=n$ of this, i.e. $d_W(n)=d_W(n,n)$.]
While we know the limit of
(\ref{H exact}) for $d_{TV}(n^\beta,n)$ as $n \rightarrow \infty$
with $\beta \in (0,1)$ fixed, the corresponding behavior of $d_W(b,n)$
remains unknown.

\noindent{\bf Open question}  What is the limit of
$d_W(n^\beta,n)$ as $n \rightarrow \infty$ for fixed $0<\beta<1$?

We know that for $\beta=1$, the limit is 2 --- this is (\ref{dw=2 claim}).
For all $\beta \in [0,1]$,
it can be seen from the coupling in Lecture 4
that $\limsup d_W(n^\beta,n) \leq 2 \beta$ --- but only for
$\beta=1$ is there a matching lower bound.
The natural candidate for the limit of the distance is the
Wasserstein distance for
the limit systems,
\begin{equation}\label{HW exact}
H_W(\beta) := d_{W} \left( \{V_i: V_i < \beta\}, \{X_i: X_i < \beta \} \right)
\end{equation}
$$
\equiv
\min_{couplings} \e \left| \{V_i: V_i < \beta\}\Delta \{X_i: X_i < \beta \} \right|.
$$
Thus, the open question involves two tasks. First, prove
(or disprove!) that the
limit exists and equals $H_W(\beta)$.

\noindent {\bf Conjecture 0.}
$ \forall \beta \in [0,1], \ H_W(\beta) = \lim d_W(n^\beta,n).
$
\\ \\
The second task in our open question is to find an explicit
formula for $H_W(\beta)$. We know only $H_W(1)=2,$ $H_W(\beta) \leq 2 \beta,$
and trivially, $H_W(0)=0$ and $H_W$ is monotone.

\section{Lecture 2. Growing a random permutation}

This is a fresh start --- the following can be understood easily from
scratch, without worrying about the connection with prime
factorizations.  However, our example for permutations below has been
carefully cooked to match the example for primes in Lecture 1.

Write $C_i(n)$
for the number of cycles of length $i$ in our random permutation
$\pi \in {\cal S}_n$, so that always $C_i(n)=0$ if $i>n$, and
$n = \sum i C_i(n)$.
The analog of Theorem \ref{primes limit}, provable easily with
inclusion-exclusion,  is that
as $n \rightarrow \infty$
\begin{equation}
\label{perms limit}
(C_i(n))_i \Rightarrow (Z_i)_i = (Z_1,Z_2,Z_3,Z_4,\ldots)
\end{equation}
with independent coordinates $Z_i$, Poisson distributed with
$\e Z_i = 1/i$.  (Note, we are deliberately
re-using the same notation that we used for prime factorizations,
although the index
must be a prime in the latter case.  To contrast the
two situations,  $Z_p$ is either geometrically distributed, with
$\p(Z_p \geq k)=p^{-k}$
and $\e Z_p = 1/(p-1)$, or else $Z_p$ is Poisson, with
$\e Z_p = 1/p$.)  That there is an independent process limit, without
rescaling, for both prime factorizations of random integers,
and the cycle structure of random permutations, is the most
basic ingredient in the analogy between these two structures.

Historical notes:  the similarity between primes and permutations
appears to have been noted first in 1976 by Knuth and Trabb Pardo
\cite{knuth}. The similarity they note is based on the common
limit behavior for the $i^{th}$ largest prime factor and the
$i^{th}$ longest cycle; further aspects of the similarity,
are discussed in \cite{AMS1}, which sets forth the role of
``conditioning'' the independent limit process as the
fundamental reason for this similarity. In contrast, in these
lectures we will focus on the closeness of primes and permutations
to the scale invariant Poisson process, with its relations involving
spacings and size biased permutations, as the underlying reason
for the similarity.  A reasonable attribution for the
construction below, based on canonical cycle notation, is Feller
1945 \cite{feller45}, but the explicit connection with cycle
\emph{lengths} seems to appear first in the unpublished lecture notes
\cite{dp86}; it was independently discovered in \cite{statsci2}
and elaborated on in \cite{tv3}, which attached the name ``Feller
coupling'' to this construction.

Start with the canonical cycle notation for a permutation $\pi$.
For
example, the permutation $\pi$  with $1 \mapsto 5, 2 \mapsto 2,$
$3 \mapsto 1,$ $4 \mapsto 4$, $5 \mapsto 3$, $6 \mapsto 7$, $7 \mapsto 6$
is written as $\pi = (1 5 3)(2)(4)(6 7)$.  In writing the
canonical cycle notation for a random $\pi \in {\cal S}_7$,
one always starts with ``(1 \ '', and then makes a seven-way
choice, between ``(1)(2\  '', ``(1 2 \ '', $\ldots$, and  ``(1 7 \ ''.
One continues with a six-way choice, a five-way choice, $\ldots$,
a two-way choice, and finally a one-way choice.

Let $\xi_i$ be defined as the indicator function $\xi_i = 1($
close off a cycle when there is an $i$-way choice).
Thus
$$
\p(\xi_i=1) = \frac{1}{i}, \ \p(\xi_i=0) = \frac{i-1}{i}, \
\mbox{and } \xi_i,\xi_2,\ldots,\xi_n \mbox{ are independent}.
$$
An easy way to see the independence of the
$\xi_i$ is to take $D_i$
chosen from 1 to $i$ to make the $i$-way choice, so
that $\xi_i=1(D_i=1)$. Absolutely no computation is needed to verify
that the map constructing canonical cycle notation,
$(D_1,D_2,\ldots,D_n) \mapsto \pi$,
 from
$[1] \times [2] \times \cdots \times [n]$ to ${\cal S}_n$,
is a bijection.  The ``decision'' variables $D_1,D_2,\ldots,D_n$
determine the random permutation on $n$ points, while the
Bernoulli variables $\xi_1,\xi_2,\ldots,\xi_n$ determine the cycle
structure, and something more
--- a size biased permutation SB($n$) of the cycle lengths!
The total number of cycles is
$$
K_n := \# \mbox{ cycles} = \xi_1+\xi_2+\cdots+\xi_n,
 \mbox{ with } \e K_n = 1 + \frac{1}{2}+\cdots + \frac{1}{n} =
 \gamma + \log n + o(1).
$$

[Incidentally, the relations above give a quick and dirty, but
pretty, way to see that the entropy $h((C_i(n))_i)$
for the cycle structure
of a random permutation of $n$ objects is asymptotically
$(\log n)^2/2$.  Namely,
the entropy of a size biased permutation of $k$ objects
is at most the entropy of a \emph{uniform}
permutation of $k$ objects, which  is $\log k! \leq k \log k$.
Thus the entropy $h$(SB($n$)) of a size biased permutation of the
$K_n$ cycle lengths of a random $n$-permutation is at most
$\e K_n \log K_n$ $\sim \log n \ll n$.  The entropy of the
Bernoulli($1/i)$ random variable $\xi_i$ is
$h(\xi_i) = -(\frac{1}{i} \log \frac{1}{i} +\frac{i-1}{i} \log \frac{i-1}{i}) $
$=\log i -\log(i-1) + \frac{1}{i}\log(i-1)$ for $i\geq 2$,
so
$$
\sum_1^n h(\xi_i) =
 \sum_2^n (\log \frac{i}{i-1}+\frac{1}{i}\log (i-1))
= \log(n+1) + \sum_1^{n-1} \frac{\log i}{i+1} \
 \sim \frac{(\log
n)^2}{2}.
$$
Since the cycle structure together with the size biased
permutation of the cycle lengths determine $\xi_1,\ldots,\xi_n$,
we have  $(\log n)^2/2 \sim \sum_1^n h(\xi_i) =$
$h((C_i(n))_i) +  h$(SB($n))$ $=h((C_i(n))_i)+ O(\log n \ll n) $,
hence $h((C_i(n))_i) \sim (\log n)^2/2$.  As an exercise,
the reader can try to verify this directly from
Cauchy's formula, and perhaps (open problem?) give an asymptotic expansion!]

The coupling which ``grows'' a random permutation
requires a very simple idea: {\em for all } values of $n$, use
the same $D_1,D_2,\ldots,$ and hence the same
$\xi_1,\xi_2,\ldots$ \ .

The Feller coupling, as motivated by the process of
writing out canonical cycle notation, ``reads'' $\xi_1\xi_2\cdots
\xi_n$ from right to left: the length of the first cycle is
the waiting time to the first one, the length of the next cycle is
the waiting time to the next one, and so on.  The multiset of
cycle lengths can be determined without regard to right or left:
every
$i$-spacing in $1 \xi_2 \xi_3 \cdots \xi_n 1$, that is,
every pattern of two ones separated by $i-1$ zeros, corresponds to a cycle of
length $i$.   The spacing from
the rightmost one in $1 \xi_2 \xi_3 \cdots \xi_n$ to the
``artificial'' one at position $n+1$  corresponds to the first cycle in canonical
cycle notation, and also to the factor $P_0$
in our construction for growing an almost uniform random integer
$J(n)P_0(n)$.

Since we are
interested in growing with $n$, we will always read
$\xi_1\xi_2\xi_3\cdots$ from left to right.
Recall that $\xi_1=1$ identically, so $\xi_1\xi_2\xi_3\cdots$ is
a random infinite word in the alphabet \{0,1\}, starting with a 1.
Almost surely, this sequence has infinitely many ones, since the
$\xi_i$ are independent with $\sum_{i \geq 1} \e \xi_i$
$=\sum_{i \geq 1} 1/i= \infty$.  We {\em define} the
inter-one spacings  $B_1,B_2,\ldots \in \BN$ by the
requirement that
$$
\xi_1\xi_2\xi_3\cdots \mbox{ has ones at } 1, 1+B_1,
1+B_1+B_2,\ldots,\ \mbox{ and nowhere else} .
$$
The example which matches the example (\ref{prime example})
 from Lecture 1,
which was 3,2,2,11,2,7,$p$ with $p \geq 13$, and partial products
1,3,6,12,132,264,1848, $1848p$,  is the sequence
\begin{equation}
\label{perm example}
\xi_1\xi_2\cdots \xi_{20}=10111000011000100000,
\end{equation}

\begin{picture}(500,50)(-20,0)
\thicklines
\put(-15,25){\line(1,0){330}}
\put(0,25){\circle*{5}}
\put(30,25){\circle*{5}}
\put(45,25){\circle*{5}}
\put(60,25){\circle*{5}}

\put(135,25){\circle*{5}}
\put(150,25){\circle*{5}}
\put(210,25){\circle*{5}}
\put(300,35){\vector(0,-1){10}}

\put(0,15){\makebox(0,0){1}}
\put(30,15){\makebox(0,0){3}}
\put(45,15){\makebox(0,0){4}}
\put(60,15){\makebox(0,0){5}}

\put(135,15){\makebox(0,0){10}}
\put(150,15){\makebox(0,0){11}}
\put(210,15){\makebox(0,0){15}}
\put(300,15){\makebox(0,0){21}}
\end{picture}

\noindent
or equivalently,
$$
B_1,\ldots,B_6,B_7 \ = \ 2,1,1,5,1,4,B_7, \  \mbox{ with } B_7 \geq 6.
$$
Do you see the correspondence between
3,2,2,11,2,7,$p$ with $p \geq 13$, and
$10111000011000100000\cdots$\ ?  Writing $p_i$ for $i^{th}$ smallest prime $p_i$,
the  prime and permutations examples match in that the
$k^{th}$ prime on the list  of primes is $p_{B_k}$.

In the Feller coupling, the cycle structure of
a random $\pi \in {\cal S}_n$
has been realized via $C_i(n) =  \# i$-spacings in
$1 \xi_2 \xi_3 \cdots \xi_{n} 1$.  For comparison, consider
$$
C_i(\infty) : =  \# i-\mbox{spacings in }1 \xi_2 \xi_3 \cdots \  = \sum_{k \geq 1} 1(B_k=i).
$$

An easy calculation, (\ref{feller bound}) and
(\ref{feller bound 2}) below,
shows $\p(C_i(n) \neq C_i(\infty))$
$\leq 2/(n+1)$.  Recall that convergence in distribution
for infinite sequences,
as in (\ref{perms limit}),
is equivalent to having convergence for the restriction to the
first $k$ coordinates, for all $k$. Thus
$$
\p( \ (C_1(n),\ldots,C_k(n)) \neq (Z_1,\ldots,Z_k)\  )
\ \leq \ 2k/(n+1) \ \rightarrow 0
$$
for all $k$, so that
$(C_1(n),\ldots,C_k(n)) \Rightarrow (Z_1,\ldots,Z_k),$
and hence \\
$(C_1(n),C_2(n),\ldots) \Rightarrow
(C_1(\infty),C_2(\infty),\ldots)$.
Comparison with (\ref{perms limit}) shows that the
$C_i(\infty)$ are independent, Poisson, with $\e C_i(\infty)=1/i$.

\subsection{A size biased permutation of the multiset having
$i$ with multiplicity $Z_i \sim $ Poisson($1/i)$}\label{Janson}

The above indirect argument, that the $C_i(\infty)$
are independent, Poisson($1/i)$ was presented at
Oberwolfach one morning in August 1993; Erd\H{o}s and Svante Janson were
in the audience. Svante asked if there were a direct proof; I said I
didn't know of one.  Before lunch time, Svante Janson
found and presented the following direct argument.
Start with the $Z_i$, given to be independent, Poisson($1/i$).
Take a random infinite multiset, having $Z_i$ copies of
$s_i := 0^{i-1}1$, the string of length and weight $i$.
Take a size biased permutation of this multiset to get a
list $R_1,R_2,\ldots$, so that by construction, the
number of $i$-spacings in the string $1 R_1 R_2 \cdots$
is $Z_i$, for each $i$.  Calculation (\cite{course}, section 9.1)
shows that the random string
$1 R_1 R_2 \cdots$ of zeros and ones has the same distribution
as $\xi_1 \xi_2 \cdots$\ .  And as an historical note:
there already existed yet another direct argument, by marking Poisson processes.
Jim Pitman describes it this way:
``As observed in Diaconis-Pitman \cite{dp86},
the fact that the numbers of i-spacings in the Bernoulli (1/j) sequence
are independent Poisson (1/i) is an immediate consequence of the
structure of records of a sequence of i.i.d. uniform (0,1) variables
$U_1, U_2, \ldots$\ .
For if $N_1 < N_2 < \cdots$ are the successive record indices, then by
a well known result of R\'enyi the
indicators $1(N_k = j \mbox{ for some }  k)$ are independent Bernoulli $(1/j)$,
and as shown by Ignatov \cite{ignatov81}
the numbers of $i$-spacings in the record sequence are independent
Poisson($1/i$).''

\subsection{Keeping score for the Feller coupling}
\label{score}

Starting from the independent Bernoulli $\xi_i$ with
$\p(\xi_i=1)=1/i$,
and defining $Z_i$ as $Z_i:=C_i(\infty)$, we have a coupling of
the cycle structures $(C_i(n))_i$ for $n=1,2,\ldots$, together
with
the independent Poisson $Z_i$ with $\e Z_i = 1/i$.
Note that for the event that an $i$-spacing occurs with right
end at $k$, the probability is a simple telescoping product:
$$
 \p( \xi_{k-i}\cdots \xi_k = 1\, 0^{i-1}\!1)
= \frac{1}{k-i}\ \frac{k-i}{k-(i-1)} \cdots
\frac{k-3}{k-2} \frac{k-2}{k-1} \ \frac{1}{k}
= \frac{1}{(k-1)k}.
$$
 We can have
$C_i(n)<Z_i$, due to $i$-spacings whose right end occurs after position $n+1$;
the expected number of times this occurs is
\begin{equation}
\label{feller bound}
\sum_{k>n+1} \p( \xi_{k-i}\cdots \xi_k = 1\, 0^{i-1}\!1)
=\sum_{k>n+1} \frac{1}{(k-1)k} = \frac{1}{n+1}
\end{equation}
The only
way that $C_i(n) > Z_i$ can occur is
if the ``artificial'' one at position $n+1$ in $1 \xi_2 \cdots \xi_n 1$
creates an extra $i$-spacing; for each $n$  this can occur for at
most one $i$, and it occurs for each $1 \leq i \leq n$ with the same
probability,
\begin{equation}
\label{feller bound 2}
\p( \xi_{n-i+1} \cdots \xi_n \xi_{n+1}=  1\, 0^{i-1}\!0)
= \frac{1}{n-i+1} \ \frac{n-i+1}{n-i+2} \cdots
\frac{n}{n+1}
 = \frac{1}{n+1}.
\end{equation}

The length $A(n)$ of the first cycle in canonical cycle notation
is precisely the value $i$ for which  this ``extra'' $i$-cycle may
occur,
$$A(n) = n+1 - \max \{j \leq n\!: \xi_j=1\},$$
so that with
\begin{equation}
\label{J for perms}
 J(n) := \max \left\{ \sum_1^l B_l : \  \sum_1^l B_l \leq n-1
\right\}
\end{equation}
we have $J(n)+A(n) = n$.

We now summarize the Feller coupling in a way which matches the coupling
for primes in subsection \ref{sketch}.
Start with independent Poisson random
variables $Z_i$ with $\e Z_i = 1/i$.  Take the
infinite multiset having $Z_i$ copies of $i$.  Take a size biased
permutation $B_1,B_2,\ldots$ of this multiset.  Let $J(n) \in [0,n-1]$
be the
largest partial sum not exceeding $n-1$.  (There is no need to
calculate the distribution of $J(n)$, but it happens to be exactly
uniform on $0,1,\ldots,n-1$, which matches  the harmonic distribution
in (\ref{harmonic}), in the sense that $\log H(n)$ is
approximately uniform on $[0,\log n]$.)  Fill in one extra cycle
length, $A(n) := n-J(n)$; this corresponds to $P_0(n)$.  The
resulting cycle structure, with cycles of lengths
$B_1,B_2,\ldots,B_L$ and $A(n)$
(where $J(n)=B_1+\cdots + B_L$), is the cycle structure of a
random
permutation chosen uniformly from ${\cal S}_n$.

Write ${\bf e}_i:= (0,0,\ldots,0,1,0,\ldots)$
for the unit vector with all zero coordinates except for a one in
position $i$.  The Feller coupling shows that the Wasserstein
insertion-deletion distance for permutations compared to their
independent limit process is at most 2, for all $n$, with a
monotonicity relation as a bonus:
\begin{equation}\label{less than 2}
\e \sum_1^n |C_i(n) - Z_i| \ \leq \frac{2n}{n+1} < 2, \mbox{ and }
\end{equation}
\begin{equation}
\label{monotone Feller}
\mbox{ always } \ \ \ \
(C_1(n),C_2(n),\ldots) \ \leq \ (Z_1,Z_2,\ldots) + {\bf e}_{A(n)}\ .
\end{equation}

The result (\ref{monotone Feller}) for random permutations is
stronger than the analogous result for prime factorizations,
(\ref{JP versus N}), in that there is no exceptionally probability
for an event where monotonicity may fail.  The result for
permutations suggests the following conjecture
 from \cite{AMS1}, for which, in the
style of Erd\H{o}s, I now offer a five hundred dollar prize.

\begin{conjecture} {\bf (\$500 prize offered) }\label{500 conjecture}
For all $n \geq 1$, it is possible to construct
$N(n)$ uniformly distributed from 1 to $n$,
$M(n)$ defined by (\ref{def M}),
and a prime $P(n)$ such that
\begin{equation}
\label{500 question}
\mbox{always } \ \ \  N(n) \ | \ M(n)P(n).
\end{equation}
\end{conjecture}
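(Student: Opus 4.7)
The plan is to use the Feller-style coupling of Section \ref{sketch} as the starting point, since it already delivers the required divisibility for free. Recall that there $J(n) = Q_1 Q_2 \cdots Q_{L(n)}$ is a prefix of the size-biased permutation of the primes in $M(n)$, so $J(n) \mid M(n)$, and $P_0(n)$ is either $1$ or a prime. Taking the candidate $\widetilde N(n) := J(n) P_0(n)$ and letting $P(n) := P_0(n)$ when the latter is prime (and an arbitrary prime otherwise), one has $\widetilde N(n) \mid M(n) P(n)$ for every outcome. The only missing ingredient is exact uniformity of $N(n)$. This parallels (\ref{monotone Feller}) in the permutation case, where monotonicity holds unconditionally; the task is to upgrade the prime analog from ``approximately uniform'' to ``exactly uniform'' while preserving divisibility.

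I would reformulate the conjecture via Strassen as an existence question. A coupling of $(N(n), M(n), P(n))$ with the prescribed marginals and with $N(n) \mid M(n) P(n)$ exists if and only if for every $S \subseteq \{1, \ldots, n\}$,
$$
\frac{|S|}{n} \; \leq \; \p\!\left( \exists\, i \in S \text{ and prime } p : \; i \mid M(n) \cdot p \right).
$$
Since $\p(d \mid M(n)) = 1/d$ for every $d$ with $P^+(d) \leq n$, the right-hand side admits explicit lower bounds by inclusion--exclusion, splitting according to whether the chosen prime $p$ divides $i$ or not. For generic $S$ spread across many $\omega$-classes the inequality is slack; the difficulty concentrates on $S$ chosen from integers with very few prime factors.

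The main obstacle is verifying this Hall condition in the extremal cases. The Feller coupling overweights integers $i$ with large $\omega(i)$ by a factor $1 + \omega(i)$ (cf.\ the comment following (\ref{JP versus N})), so integers with $\omega(i) \in \{0,1\}$ are underweighted and will control the tightest cases, in particular sets $S$ consisting of primes in $(n/2, n]$ or of prime powers. One must show that the product-of-geometrics distribution on $M(n)$ is rich enough in large-prime-factor configurations to meet the uniform demand $|S|/n$ for all such $S$, a statement finer than the bound in Theorem \ref{dw upper theorem} because it must hold exactly for every $n \geq 1$ rather than on average in the limit. My expectation is that any successful attack will couple the Feller backbone to an explicit mass transport --- guided by Hall or max-flow --- that exploits the multiplicative structure of $M(n)$ near the top prime factor; the presence of the prize reflects that no routine adaptation of the known couplings, each of which works only up to $o(1)$ in total variation, seems to suffice.
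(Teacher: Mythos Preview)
This statement is a \emph{conjecture}, not a theorem: the paper offers no proof, only a \$500 prize, a heuristic reason for belief (permutations ``fit together perfectly'' via the Feller coupling, and primes ought to as well), and a reformulation in the language of bipartite matchings. Your proposal is likewise not a proof --- and you are candid about this in your final paragraph --- so there is no gap to diagnose in the usual sense. What you have written is a research outline, not a demonstration.

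Your Strassen/Hall reformulation is essentially the same as the paper's own restatement. The paper writes the Hall condition from the $M$-side: for every $D \subset [n]$,
\[
\frac{1}{n}\,\bigl|\,l(D)\cap[n]\,\bigr| \;\ge\; \prod_{p\le n}\Bigl(1-\frac{1}{p}\Bigr)\sum_{m\in D}\frac{1}{m},
\]
where $l(D)=\{i:\exists\,m\in D,\ p\text{ prime},\ i\mid mp\}$. You write it from the $N$-side: for every $S\subset[n]$, $|S|/n$ is at most the probability that $M(n)p$ has a divisor in $S$ for some prime $p$. These are the two directions of Hall's marriage condition on the same bipartite graph, so the reformulations coincide. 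Your identification of the extremal sets (integers with $\omega\in\{0,1\}$, primes in $(n/2,n]$) is a reasonable guess and goes slightly beyond what the paper records; the paper also mentions a sharper version of the conjecture in which $P_0(n)$ is specifically the first prime factor of $N(n)$ under a size-biased permutation.

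In short: you have correctly recognized that the statement is open, and your proposed line of attack matches the paper's own reformulation. Neither you nor the paper closes the problem.
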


My reason for believing the conjecture to be true is that
permutations ``fit together perfectly,'' as witnessed by
the Feller coupling, and that primes do so also.  One sense
in which ``primes fit together perfectly'' is that the
weights $\log 2, \log 3, \log 5,\log 7, \log 11, \ldots$
are such that 1)  all multisets of primes have distinct weights,
and 2) the weights of these multisets are evenly spaced:
 $\log 1,\log 2, \log 3, \log 4, \ldots$\ .

A restatement of Conjecture \ref{500 conjecture} in the language
of
stochastic monotonicity, with respect to the partial order of divisors and
multiples,  is that for every $n$, \emph{for some randomized choice} of $P_0(n)$ to
be 1 or a prime factor of $N(n)$, $N(n)/P_0(n)$ lies below $M(n)$
in distribution.  A more specific version of the conjecture, from
\cite{primedw}, is that $P_0(n)$ can be chosen as the first prime
factor of $N(n)$ under a size biased permutation.

In terms of the usual combinatorial language of matchings,
Conjecture  \ref{500 conjecture} may be stated as follows.
For any set $D$ of positive integers, define
$$
l(D) := \{ i: \exists m \in D, \ p \mbox{ prime}, \ \ i \ | \ mp
\},
$$
so that for example $l(\{1\})$ is the set of primes, together
with 1.  Write $[n]$
for $\{1,2,\ldots,n\}$.  The conjecture is that
$\forall n \geq 1, \ \forall D \subset [n],$
$$
\frac{1}{n} \ | \ l(D) \cap [n] \ | \  \geq \ \sum_{m \in D}
\p(M(n)=m) =
\prod_{p \leq n} \left(1-\frac{1}{p}\right) \sum_{m \in D }\frac{1}{m}.
$$

\subsection{At least $2-\epsilon$ indels are needed}

The following extensions to (\ref{less than 2}) aren't obvious,
but help give a full picture of the qualitative behavior
of the Feller coupling. Namely, the positive and negative
parts of the quantity inside the absolute value in the
left side of (\ref{less than 2}) have
$$
 \sum_1^n (C_i(n)-Z_i)^+ \ \rightarrow 1 \ \ \mbox{ in probability
 and expectation, and}
$$
$$
\mbox{for } k=0,1,2,\ldots,
\p\left(  \sum_1^n (Z_i -C_i(n))^+ \ = k\right) \rightarrow p_k >0,
\ \mbox{ with }\sum k p_k =1.
$$
In words, the coupling converts $(C_1(n),\ldots,C_n(n))$ to $(Z_1,\ldots,Z_n)$
with, in the limit, one deletion and a random, mean 1 number of
insertions, using $k$ insertions with probability $p_k$.

A separate argument (see \cite{primedw}) shows that, for {\em any}
coupling, Feller or otherwise, with probability approaching one,
at least one deletion is necessary, i.e.
$1=\lim \p(  \sum_1^n (C_i(n)-Z_i)^+  \ \geq 1)$.  In particular,
$1 \leq \liminf \e  \sum_1^n (C_i(n)-Z_i)^+ $.  Then, since $\e C_i(n)$
$= 1/n$ $= \e Z_i$  for $i=1$ to $n$, the number of insertions
must, on
average, equal the number of deletions.  Hence
$1 \leq \liminf \e  \sum_1^n (Z_i-C_i(n))^+ $, so that
$2 \leq \liminf  \e \sum_1^n |C_i(n) - Z_i|$.  This shows that the Feller
coupling, from the point of view of  (\ref{less than 2}),
is asymptotically optimal.

\section{Lecture 3.  Rescaling space -- to get a scale invariant Poisson process}

\subsection{Review: couplings for primes and permutations}

To review, we have two couplings for growing a system with parameter
$n$.  The coupling for primes grows a random integer $J(n)P_0(n)$ which is
almost uniformly distributed from 1 to $n$.  The coupling for permutations grows a
random permutation which is distributed exactly uniformly in
${\cal S}_n$.

The coupling for primes takes a list $Q_1,Q_2,\ldots$ of primes,
forms their partial products, and pays attention to
$J(n)=Q_1 Q_2\cdots Q_{L(n)}$, the
largest partial product not exceeding $n$.  On a logarithmic
scale, this means that we plot the points $0, \log Q_1$,
$\log Q_1 + \log Q_2, \ldots$, and consider the largest point not
exceeding $\log n$. Think of the values $\log Q_i$ as spacings.
We finish by  filling in one extra spacing, $\log P_0(n)$, to
get to the point $\log( J(n)P_0(n))$ close to, but not to the right of,
$\log n$.  The resulting integer is $J(n)P_0(n) = Q_1 Q_2 \cdots Q_L
P_0$,
with either $L$ or $L+1$ prime factors, depending on whether
$P_0=1$ or not.

The coupling for permutations takes a list $B_1,B_2,\ldots$ of
positive integers, forms their partial sums, and pays attention
to $J(n)$, the largest partial sum not exceeding $n-1$.
We then fill in one extra spacing, of size $A(n)$, to get to the
point $n=J(n)+A(n)$ $=B_1+B_2+\cdots+B_L+A(n)$.  The resulting
cycle structure has $L(n)+1$ cycles.

We review once again, looking only at the sequence of points
plotted, and their spacings.  For primes, the points plotted are
$0$, $\log Q_1$, $\log Q_1 + \log Q_2, \ldots$,  with spacings
$\log Q_1, \log Q_2, \log Q_3, \ldots$.  Conditional on the multiset of
spacings, the order in which they are taken is given by a
size biased permutation.  For each prime $p$,
the number of $k$
such that $\log Q_k=\log p$ is $Z_p$, and the $Z_p$ are independent,
geometrically distributed.
For permutations, the points plotted are
$0$, $B_1$, $B_1 + B_2, \ldots$,  with spacings
$B_1, B_2, B_3, \ldots$.  Conditional on the multiset of
spacings, the order in which they are taken is given by a
size biased permutation.  For each positive integer  $i$,
the number of $k$
such that $B_k=i$ is $Z_i$, and the $Z_i$ are independent,
Poisson($1/i)$.

The underlying reason that primes and permutations have similar
behavior is that for both systems, the {\em spacings}
have the same {\em logarithmic} property:  the expected total
number of spacings of size at most $x$ grows like $\log x$.
For primes, this is the property that
$$
\e \sum_{\log p \leq x} Z_p \  \ = \sum_{p \leq e^x}
\frac{1}{p-1} \ \sim \log x
$$
and for permutations,
$$
\e \sum_{i \leq x} Z_i \ \ = \sum_{i \leq x} \frac{1}{i} \ \sim
\log x.
$$
We pursue this further; even the expression $\gamma + \log x + o(1)$
is common to the two systems --- see (\ref{1/q f}).

\subsection{Limits after scaling space}

What does the sequence $\xi_1 \xi_2 \cdots $ look like
when viewed from a distance?  Encode
$\xi_1 \xi_2 \cdots  \in \{0,1\}^\infty$ as the point process,
i.e random measure,
$$
\sum_{i \geq 1} \xi_i \, \delta(i),
$$
where $\delta(i)$ is the measure placing unit mass at $i$.
Our example was $\xi_1 \xi_2 \cdots = $
$$
10111000011000100000\cdots
\longleftrightarrow \delta(1) + \delta(3) + \delta(4) + \delta(5) +
\delta(10) + \delta(11) + \delta(15) + \cdots \ .
$$
\begin{picture}(500,50)(-20,0)
\thicklines
\put(-15,25){\line(1,0){330}}
\put(0,25){\vector(0,1){15}}
\put(30,25){\vector(0,1){15}}
\put(45,25){\vector(0,1){15}}
\put(60,25){\vector(0,1){15}}

\put(135,25){\vector(0,1){15}}
\put(150,25){\vector(0,1){15}}
\put(210,25){\vector(0,1){15}}

\put(0,15){\makebox(0,0){1}}
\put(30,15){\makebox(0,0){3}}
\put(45,15){\makebox(0,0){4}}
\put(60,15){\makebox(0,0){5}}

\put(135,15){\makebox(0,0){10}}
\put(150,15){\makebox(0,0){11}}
\put(210,15){\makebox(0,0){15}}
\end{picture}

We rescale space:  divide the locations by $x$, to get
$\sum_i \xi_i \, \delta(i/x)$ and take $x \rightarrow
\infty$.
The limit is the ``scale invariant'' Poisson process $\X$
on $(0,\infty)$ with intensity $\frac{1}{x} \ dx$.  The
intensity gives the expected number of points in any interval
$(a,b)$ with $0<a<b<\infty$, which is
$$
\e \X(a,b) = \int_a^b \frac{1}{x} \ dx \ =
\log (b/a).
$$
The Poisson property is that for any disjoint $I_1,I_2,\ldots \subset
(0,\infty)$,
the counts of points in these sets, $\X(I_1), \X(I_2),\ldots$ are
independent, and Poisson distributed.  In particular, for $0<a<b<\infty$
$$
\p( \X(a,b)=0) = \exp(- \e \X(a,b)) = \frac{a}{b}\ .
$$

Likewise, consider the point process $\sum Z_i \, \delta_i$.
This process encodes the multiset of spacings used in the Feller
coupling, without keeping track of the order in which the spacings
are used.
In our example, $\xi_1 \xi_2 \cdots = $
$10111000011000100000\cdots,$
which implies $Z_1 \geq 3, Z_2 \geq 1,Z_4\geq1, Z_5\geq 1$.  We
now
reveal more  about the outcome in this example, and declaring that
$Z_1=3,Z_2=1,Z_3=0,Z_4=1,Z_5=1$, which corresponds to the example
for primes, (\ref{prime example counts}).  And to have a nice picture,
we also declare that $Z_i=0$ for $i=6$ to 13, $Z_{14}=2$,
$Z_{15}=Z_{16}=0,$ $Z_{17}=1$, and $Z_i=0$ for $i=18$ to 25.
Our random measure is

$$
\sum_{i \geq 1} Z_i \, \delta(i) = 3 \delta(1) + \delta(2) +
\delta(4)
+ \delta(5) + \delta(14) + \delta(17) +\cdots \ .
$$
\nopagebreak
\begin{picture}(500,60)(-20,10)
\thicklines
\put(-15,25){\line(1,0){330}}

\put(0,25){\vector(0,1){45}}
\put(15,25){\vector(0,1){15}}
\put(45,25){\vector(0,1){15}}
\put(60,25){\vector(0,1){15}}
\put(165,25){\vector(0,1){30}}
\put(210,25){\vector(0,1){15}}

\put(0,15){\makebox(0,0){1}}
\put(15,15){\makebox(0,0){2}}
\put(45,15){\makebox(0,0){4}}
\put(60,15){\makebox(0,0){5}}
\put(165,15){\makebox(0,0){14}}
\put(210,15){\makebox(0,0){17}}
\end{picture}

This process rescaled, $\sum Z_i \, \delta(i/x)$,
converges in distribution to $\X$:
\begin{equation}
\label{rescale perms}
\sum_{i \geq 1}  Z_i \, \delta(i/x) \ \Rightarrow \X \ \ \mbox{
as } x \rightarrow \infty.
\end{equation}
  Recall that $Z_i$ is the
number of $i$-spacings
of $ \xi_1 \xi_2 \cdots$\ .  We have a
process $\xi$ whose rescaled limit is $\X$, and the spacings of
this process $\xi$ also have rescaled limit $\X$.  Hence it
is plausible to guess that $\X$ is equal in distribution to its
own process of spacings.

\subsection{The scale invariant spacing lemma}\label{sect scale invar}
\label{sect exp(-wy)}

Indeed, the process $\X$ {\em is equal in distribution} to its own
spacings.  Since $\X$ has no multiple points (unlike
$\sum Z_i \delta(i),$ which can have multiple points), $\X$ \
can be encoded as a random set
 $\X = \{ X_i\!\! : i \in \BZ \}$ $ \subset (0,\infty)$, with the points
indexed so that for all $i \in \BZ$, $X_i < X_{i+1}$.  With
such indexing, the spacings are the points
$$
Y_i := X_{i+1}-X_i,
$$
and the statement that the spacings of $\X$ have the same
distribution as $\X$ is
$$
\{ Y_i\!: i \in \BZ \} \indist \X,
 \ \ \mbox{ and }1 = \p( Y_i \neq Y_j \ \forall i \neq j \in \BZ).
$$

The proof of this, from \cite{primedw},
is inspired by Janson's proof in
section \ref{Janson} that if the multiset with $Z_i$
copies of $i$, to be used as spacings,
 is placed in a size biased permutation, then the
set of points is encoded by the Bernoulli ($1/i$)
sequence $1 \xi_2 \xi_3 \cdots$\ .  Viewing this in terms of the
limits under spatial rescaling, it suggests that if the points of
$\X$, to be used as spacings, are placed in a size biased
permutation, then the set of partial sums also is distributed
as the scale invariant Poisson process.

The size biased permutation of $\X$ has
a very pleasing, symmetric expression.
Recall from section \ref{size bias section}
that a size biased permutation can be generated by using,
for an object of weight $x$, an exponentially distributed
label $W$, with density $f_W(w) = x e^{-wx}$.  To emphasize
that the weights are also random, and we have conditioned on
the values of these weights, we write this with
the notation for a conditional density given $x$, that is
$f_{W|X}(w|x) \ dw = x e^{-wx} \ dw$.
Starting with the scale invariant Poisson process
$\X = \{ X_i \}$, with intensity $f_X(x) \ dx = (1/x) \ dx$ on
$(0,\infty)$, and attaching label $W_i$ to $X_i$, the process
$\{ (W_i,X_i): i \in \BZ\}$ of points with labels is a Poisson
process  with intensity $f_X(x) \ dx \ f_{W|X}(w|x) \ dw\ =  $
$$
f_{W,X}(w,x) \ dw \ dx  = \
e^{-wx} \ dw \ dx , \ \ \ \ (w,x) \in (0,\infty)^2.
$$
Note the beautiful and  perfect symmetry between the
points  and  labels: the distribution of the
process is invariant
under $(w,x) \mapsto (x,w)$.
No such symmetry is  possible for
our other size biased permutations, since the points have discrete
support, such as $\BN$ or $\{ \log p: \ p $ is prime~$\}$, while
the labels are continuously distributed in $(0,\infty)$.
The distribution of the
process is also invariant
under rescaling of the form
$(w,x) \mapsto (cw,x/c)$; we apply this with $c=2$ in the picture
below, and with $c=\log n$ in the proof of Lemma
\ref{density of J lemma}.

The picture shows the Poisson process with intensity $e^{-wx}$
restricted to the region $(0,b]^2$ with $b=5$. The number of points
in this region
is Poisson with mean $\int_0^b (1-e^{-bx}) \ dx$; this mean is 3.796
for $b=5$, and would be $8.401$ for $b=50$.

\begin{picture}(400,150)(35,-25)
\put(0,0){\line(1,0){400}}
\put(0,0){\line(0,1){100}}
\put(-5,100){\line(1,0){10}}
\put(400,-5){\line(0,1){10}}
\put(-12,100){\makebox(0,0){5}}
\put(400,-15){\makebox(0,0){5}}

\put(53.04,54.14){\circle*{5}}
\put(8.846,27.42){\circle*{5}}
\put(183.797,27.288){\circle*{5}}
\put(116.31,4.451){\circle*{5}}

\put(6.166,92.158){\circle{5}}
\put(31.46,47.249){\circle{5}}
\put(26.53,43.995){\circle{5}}
\put(170.85,42.64){\circle{5}}
\put(52.35,24.497){\circle{5}}
\put(32.84,22.41){\circle{5}}
\put(58.8,20.36){\circle{5}}
\put(175.267,11.076){\circle{5}}
\put(182.97,9.252){\circle{5}}
\put(178.859,4.1){\circle{5}}
\put(25.84,56.55){\makebox(0,0){$\times$}}
\put(190.39,14.63){\makebox(0,0){$\times$}}
\put(134.03,1.003){\makebox(0,0){$\times$}}

\end{picture}
 We show three
realizations; the first experiment, shown with solid circles, had  4
points in this region,  the second experiment, shown with open circles
had 10 points, and the third experiment, shown with ``$\times$'' had 3 points.
This is honest simulation; I did only three runs, even though they
hardly look typical to my naive eye.  Try to visualize each of
the three runs by itself.

The proof of the scale invariant spacing lemma goes as follows.
Start with the point process on $(0,\infty)^2$ with intensity
$e^{-wy} \ dw \ dy$, and index the points as $(W_i,Y_i)$ with
$W_i > W_{i+1}$ for all $i \in \BZ$. (The labels
$W_i$ are all distinct, with probability one, and we remove
 from the probability space the complementary
event.)
Notice the reverse direction for
the deterministic inequality;
small indices $i \longleftrightarrow$  large labels
$W_i$,
which
tend to go with small weights $Y_i$.
This gives us a set of
points $\{ Y_i \}$ having the distribution of the scale invariant
Poisson process, indexed in order of a size biased permutation, tending
 from small to large.
Define the points $X_j$ to be the partial sums of the $Y_i$ in
this order:
$$
\mbox{ for } i \in \BZ, \ \ \ X_j :=  \sum_{-\infty < i < j}
Y_i
$$
This gives a set of points $0 < \cdots < X_i < X_{i+1} < \cdots < \infty$,
and further calculation shows that the distribution of
$\{X_i: \ i \in \BZ\}$
is that of a scale invariant Poisson process
with intensity $(1/x) \ dx$.
By construction, the spacings of the $X_i$ are the points $Y_i$,
with the desired distribution.

\subsection{Primes and the scale invariant Poisson}

How does the process of  points in the coupling for primes appear,
viewed from a distance?  Recall, the points are
$0, \log Q_1, \log Q_1 +\log Q_2, \cdots$,
and their spacings, $\log Q_1, \log Q_2,\ldots$
are such that $\log p$  occurs $Z_p$ times, where the $Z_p$
are independent and geometrically distributed.  A process which
encodes the multiset of spacings, in a form suitable for spatial
rescaling, is the random measure
\begin{equation}
\label{primes measure}
\sum_{i \geq 1} \delta(\log Q_i)
\ = \ \sum_p Z_p \ \delta(\log p).
\end{equation}
The direct analog of (\ref{rescale perms}) would be the
statement that
\begin{equation}
\label{rescale primes}
\sum_{p}  Z_p \, \delta(\log p /x)
 \ \Rightarrow \X \ \ \mbox{ as } x \rightarrow \infty.
\end{equation}
Standard probability theory reduces this to showing that for
$0<a<b<\infty$, the expected mass that the rescaled measure gives
to $(a,b)$ converges to $\log(b/a)$ as $x\rightarrow \infty$.
The mass is $\sum_{ax < \log p <bx} \e Z_p$ $=\sum_{e^{ax} < p < e^{bx}}
1/(p-1)$,
so that sufficient number-theoretic knowledge needed to prove
(\ref{rescale primes}) is that as $y \rightarrow \infty$,
$\sum_{p \leq y} 1/p $ $=B+ \log \log y + o(1)$,
for some constant $B$,  a statement weaker than the prime
number theorem.

But the random measure $\sum_{p}  Z_p \, \delta(\log p )$
=$\sum_{i \geq 1} \delta(\log Q_i)$
is much closer to $\X$ than the rescaling relation
(\ref{rescale primes}) shows. Namely, we can match up
points so that the total amount of displacement needed to convert one
sequence to the other is finite (in expectation, and therefore
almost surely.)  This coupling comes from \cite{primedw}, where it
is combined with the total variation distance
approximation of the small prime factors of a
uniform integer, to give a metrized version of the Poisson
process approximation for the ``intermediate'' prime divisors,
 from De Koninck and Galambos \cite{galambos}.
The points $X_i$ of $\X$ are indexed by $i \in \BZ$, while the
points $Q_i$ are indexed by $i \in \BN$, so we define $Q_i = 1$
for $i \leq 0$.  The claim is that we can construct the $Q_i$ and $X_i$
on a single probability space, so that
\begin{equation}\label{couple geometric}
  \e \sum_{i \in \BZ} | X_i - \log Q_i | < \infty\ .
\end{equation}

\subsubsection{\bf Ignoring the difference between geometric and
Poisson}

In (\ref{couple geometric}) the chief obstacle is that $\X$ is
intrinsically a Poisson process, while the prescribed multiplicity
of $p$ in the sequence $Q_i$ is not Poisson, but rather a
geometric, $Z_p$.
We make our task easier if we change the prescription to the following:
for every
$p$, $A_p= \sum_i 1(Q_i=p)$, where
the $A_p$ are independent, Poisson($1/p$).  The two versions of
the prescribed counts, $Z_p$ and
$A_p$ are close, and can be coupled with $\e |Z_p-A_p| =
1/(p(p-1))$.
To convert the coupling with Poisson multiplicities into a
coupling with geometric multiplicities, we can move
$|Z_p-A_p|$ copies of $\log p$, each at most through distance $\log
p$, because there is an infinite supply of
points $\log Q_j = 0$ to swap with.
Since
\begin{equation}\label{perturb}
\sum \log p /(p(p-1)) < \infty,
\end{equation}
this perturbation
is
absorbed by the right side of (\ref{couple geometric}).

A simple way to handle the scale invariant Poisson process
on $(0,\infty)$ is to
start with the translation invariant Poisson $\L$ with intensity
$ 1 \ dx$ on $\BR$ --- a process for which the number of points in
an interval of length $x$ is Poisson distributed, with mean $x$.
If the points of this process are $L_i$ for $i \in \BZ$, then
setting
\begin{equation}\label{exponential map}
  X_i := e^{L_i}
\end{equation}
gives the points of the scale invariant Poisson process $\X$.
To get $A_p$ to be Poisson with mean $1/p$, all we need to do is
assign some interval of length $1/p$, and let $A_p$ count how many
points of $\L$ land in that interval.  Using disjoint intervals
for different $p$ makes the $A_p$ mutually independent.

The error estimate for the prime number theorem (see e.g.
\cite{rs}, or
\cite{tenenbaum} section 4.1) implies the well known estimate
\begin{equation}\label{1/p rate}
  \sum_{p \leq x} \frac{1}{p} = B + \log \log x \ +
  O \left(\exp(-c  \sqrt{\log x} \, )\right)
\end{equation}
as $x \rightarrow \infty$, for some $c>0$, with  constant
$$
B :=  \gamma - \sum_{k \geq 2} \sum_p \frac{1}{kp^k}
\ \doteq
.261497,
$$
where $\gamma$ is  Euler's constant,
$\gamma \doteq .5772$\ .
We use this in the form
\begin{equation}\label{1/p f}
  f(x) := -B + \sum_{p \leq e^x} \frac{1}{p} \ \ \
   =\log x + O \left(\exp(-c  \sqrt{ x} \, )\right).
\end{equation}
The best upper bound for (\ref{1/p rate})
is $O( \exp(-c(\log x)^{3/5}(\ll  x)^{-1/5}))$, but
more easily stated estimate
leads to the same order error bounds in our work.

We define a function $g: \BR \rightarrow \{0,\log 2,\log 3,\ldots \}$
to be essentially the inverse of $f$.
Specifically,  $g$ has $(-\infty,-B] \mapsto 0$, $(-B,-B+1/2] \mapsto
\log 2,$ $(-B+1/2,-B+1/2+1/3] \mapsto \log 3, \ldots$\ \, .  Since $f$
is close to the log function, $g$ is close to the exponential
function.  Now let $h: (0,\infty)
\rightarrow \{0,\log 2,\log 3,\ldots \}$ be defined by
$h(x) := g( \log x)$, so that $h$ is close to the identity
function on $(0,\infty)$, and can be applied directly to the
points $X_i$ of the scale invariant Poisson process on $(0,\infty)$.
We have, under $h$, $(0,e^{-B}] \mapsto 0$, $(e^{-B},e^{-B+1/2}] \mapsto
\log 2,$ $(e^{-B+1/2},e^{-B+1/2+1/3}] \mapsto \log 3, \ldots$\ \, .
This map $h$ is such that  the number of $X_i$ with
$h(X_i)=\log p$ is Poisson(1/p), independently for all primes $p$,
and $h$ is so close to the
identity function that $\e \sum_{i \in \BZ} |X_i - h(X_i)| <
\infty$.

The illustration below show the ``bins'' for the translation
invariant Poisson process on $\BR$, with all arrivals to the left
of $-B$ representing ones, arrivals between $-B$ and $-B+1/2$
representing twos, and so on.  The length of the bin corresponding
to $p$ is $1/p$.  We have faked arrivals to
correspond to the example (\ref{prime example counts})
 from Lecture 1, with $Z_2=3,Z_3=1,Z_5=0,Z_7=1,Z_{11}=1$.
The reader should \emph{imagine} an arrow labeled
``exponential map,'' pointing to a second picture, labeled
``Bins for Poisson $(1/x \ dx$ on
$(0,\infty$))'', with dividing markers at
$e^{-B},e^{-B+1/2},e^{-B+1/2+1/3}$, and so on.

\begin{picture}(500,70)(-20,0)
\put(50,50){\makebox(0,0)[bl]{Bins for Poisson ($1 \ dx$
on $(-\infty,\infty)$)}}
\thicklines
\put(-15,25){\line(1,0){330}}
\thinlines

\put(47.7,25){\line(0,1){15}}      
\put(147.7,25){\line(0,1){15}}    
\put(214.367,25){\line(0,1){15}}
\put(256.367,25){\line(0,1){15}}
\put(282.938,25){\line(0,1){15}}
\put(301.12,25){\line(0,1){15}}

\put(47.7,16){\vector(0,1){9}}      
\put(147.7,16){\vector(0,1){9}}    
\put(214.367,16){\vector(0,1){9}}

\put(100,20){\line(0,1){10}} 

\put(-15,15){\makebox(0,0){$-\infty$}}\put(325,15){\makebox(0,0){$\infty$}}
\put(47.7,5){\makebox(0,0){$-B$}}      
\put(100,10){\makebox(0,0){0}}
\put(142,5){\makebox(0,0){$-B+\frac{1}{2}$}}    
\put(204,5){\makebox(0,0){$-B+\frac{1}{2}+\frac{1}{3}$}}

\put(98,38){\makebox(0,0){$\frac{1}{2}$}}
\put(181,38){\makebox(0,0){$\frac{1}{3}$}}
\put(238,38){\makebox(0,0){$\frac{1}{5}$}}
\put(270,38){\makebox(0,0){$\frac{1}{7}$}}
\put(292,38){\makebox(0,0){$\frac{1}{11}$}}
\put(40,28){\circle{4}}\put(25,28){\circle{4}}\put(0,28){\makebox(0,0){$\cdots$}}
\put(60,28){\circle{4}}\put(65,28){\circle{4}}\put(120,28){\circle{4}}  
\put(150,28){\circle{4}} 
\put(282,28){\circle{4}}    
\put(289,28){\circle{4}}   
\put(316,28){\makebox(0,0){$\cdots$}}
\end{picture}

\begin{theorem}
\label{Poisson 1/p thm}
Let $\X$ be the scale invariant Poisson process on $(0,\infty)$,
with intensity $(1/x) \ dx$, and points $X_i, i \in \BZ$.  Define
$Q_i$ by $\log Q_i = h(X_i)$.  Then every $Q_i$ is either
one or a prime, and for every prime $p$, $A_p := \sum_i 1(Q_i=p)$
is Poisson distributed, with $\e A_p = 1/p$.  The $A_p$ are
mutually independent, and with $f$ defined by (\ref{1/p f}),
\begin{equation}\label{Poisson 1/p distance}
  \e \sum_{i \in \BZ} | X_i - \log Q_i |  \
  = \int_0^\infty |f(x) - \log x | \ dx  \ < \infty.
\end{equation}
\end{theorem}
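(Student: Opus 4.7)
My strategy is to transfer everything to the translation-invariant Poisson process $\L$ on $\BR$ of unit intensity via the exponential map $X_i = e^{L_i}$, where all three claims become transparent. Since $h(x) = g(\log x)$, we have $\log Q_i = g(L_i)$, and because $g$ takes only the values $0, \log 2, \log 3, \log 5,\ldots$ by construction, every $Q_i$ is either $1$ or a prime. For a given prime $p$, the count $A_p$ equals the number of points of $\L$ lying in the subinterval of $\BR$ that $g$ sends to $\log p$; this interval has length $1/p$. Since these subintervals are pairwise disjoint (one for each prime $p$, together with the leftmost ray that maps to $0$) and $\L$ is a Poisson process of unit intensity, the counts $A_p$ are automatically mutually independent, and each is Poisson with mean $1/p$.

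For the distance identity, I would apply Campbell's theorem to the Poisson process $\X$ (whose intensity measure is $(1/x)\,dx$ on $(0,\infty)$) to get
$$\e \sum_{i \in \BZ} |X_i - \log Q_i| \;=\; \int_0^\infty |x - h(x)| \, \frac{dx}{x},$$
and then substitute $u = \log x$, $du = dx/x$, to obtain $\int_{-\infty}^\infty |e^u - g(u)| \, du$. The remaining task is to establish
$$\int_{-\infty}^\infty |e^u - g(u)| \, du \;=\; \int_0^\infty |f(x) - \log x| \, dx,$$
which I would prove by a geometric reflection argument. Both integrals compute the area of a planar region lying between two curves: on the left, between $y = e^u$ and the horizontal step function $y = g(u)$; on the right, between $y = \log x$ and the horizontal step function $y = f(x)$. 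The key observation is that $f$ and $g$ are monotone pseudo-inverses, so that the filled-in graph of $g$ (vertical jumps at bin endpoints included) reflects across the diagonal $u = y$ onto the filled-in graph of $f$: the vertical jumps of one become horizontal plateaus of the other because the heights of the plateaus of $g$ are precisely the $x$-coordinates $\log p$ of the jumps of $f$, and the lengths $1/p$ of the plateaus of $g$ are precisely the heights of the jumps of $f$. Since $y = e^u$ and $y = \log x$ are also diagonal reflections, the whole region maps onto its counterpart and areas are preserved under reflection.

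Finiteness of $\int_0^\infty |f(x) - \log x| \, dx$ follows from two elementary bounds. By (\ref{1/p f}), $|f(x) - \log x| = O(\exp(-c\sqrt{x}))$ for large $x$, which is integrable at infinity. On $(0, \log 2)$ no primes satisfy $p \leq e^x$, so $f \equiv -B$ there and the integrand is $|B + \log x|$, which has only a logarithmic singularity at $0$ and is therefore integrable. On any compact subinterval of $(0,\infty)$ away from $0$ the integrand is bounded. I expect the main subtle step to be verifying the reflection identity with care: one must track how the jumps and plateaus of the two step functions $f$ and $g$ match under reflection across the diagonal, so that their filled-in graphs are exact geometric reflections, after which the equality of the two integrals is automatic.
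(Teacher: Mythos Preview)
Your proposal is correct and follows essentially the same approach as the paper: transfer to the unit-intensity Poisson process $\L$ via $X_i=e^{L_i}$ so that $\log Q_i=g(L_i)$, read off the independence and Poisson$(1/p)$ distribution of the $A_p$ from the disjoint bins of length $1/p$, compute the expected sum via Campbell/Fubini as $\int_{-\infty}^\infty |g(l)-e^l|\,dl$, and identify this with $\int_0^\infty |f(x)-\log x|\,dx$ by the diagonal-reflection area argument, checking finiteness at $0$ and $\infty$ exactly as you describe. The only cosmetic difference is that you apply Campbell's theorem to $\X$ and then substitute $u=\log x$, whereas the paper works directly in $\L$-coordinates; the computations are identical.
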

\begin{proof}
Constructing $\X$ from $\L$ as in the discussion above, so that
$X_i = \exp(L_i)$, we have $\log Q_i = h(X_i) = g(L_i)$.
This construction makes it obvious that the $A_p$ are independent, Poisson, with
$\e A_p = 1/p$.  To prove (\ref{Poisson 1/p distance}), we argue that
\begin{equation}\label{area bound}
\e \sum_{i \in \BZ} |\log Q_i -X_i |
= \e \sum |g(L_i)-\exp(L_i)| = \int_{-\infty}^\infty |g(l)-e^l| \ dl =: a_0,
\end{equation}
where we have used Fubini to express an expectation of a sum over
points of $\L$ in terms of the intensity of $\L$. Now
\begin{equation}\label{a0}
a_0 = \int_{-\infty}^\infty |g(l)-e^l| \ dl = \int_0^\infty |f(x)-\log x| \ dx <
\infty,
\end{equation}
where the equality holds because $a_0$
may be interpreted as the area between the
graphs of $g$ and the exponential function, and equals
the area between the graphs of $f$ and the logarithm (with vertical segments
added to span the jumps of $f$ and of $g$.)
The convergence of this last integral at infinity follows from the bound
(\ref{1/p f}), and the convergence of the integral
at 0  follows
simply as $\int_0^{\log 2} |f(x)-\log x| \ dx $
$= \int_0^{\log 2} (-B - \log x) \ dx
< \infty$.
\end{proof}

\subsubsection{\bf Exploiting the relation between geometric and
Poisson}\label{sect geometric to Poisson}

The argument at (\ref{perturb}), for changing the coupling with
$A_p \sim $ Poisson($1/p$) copies of $p$ to one with $Z_p \sim$
Geometric, is not pretty, but more importantly, it is a
non-explicit procedure and makes it hard to control the size
biased
permutation of the $\log Q_i$ and their partial products.
Carrying out the following explicit coupling turned out to be the key
to being able to analyze the coupling for primes described in
Lecture 1.  A simple observation, that the geometric distribution
is compound Poisson, makes everything work!  Every baby {\em
should know} the following standard lemma, which we will apply with $a=1/p$
and corresponding  random variables $Z_p$ and $\Ak_p \equiv A_{p^k}$.
The added entropy involved in the integer partition occurring in
(\ref{compound Poisson general}) is discussed later, at
(\ref{entropy increment}).

\def\Ak{A^{(k)}}
\begin{lemma}
Let $Z$ be geometric with parameter $a \in (0,1)$,
i.e.~$\p(Z \geq k) =a^k$.   Let $A \equiv A^{(1)}$
be Poisson with $\e A=a$, and more generally,
for $k=1,2,3,\ldots$, let $A^{(k)}$ be Poisson with
$\e \Ak = a^k/k$, with the $\Ak$ independent.
Then
\begin{equation}\label{compound Poisson general}
 Z \indist \sum_{k\geq 1} k \Ak \ = A + \sum_{k\geq 2} k \Ak \ .
\end{equation}
\end{lemma}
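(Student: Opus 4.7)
The plan is to verify the identity in distribution by computing and comparing probability generating functions (pgfs) on both sides.

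First I would write down the pgf of the geometric random variable $Z$. From $\p(Z \geq k) = a^k$ we get $\p(Z=k) = a^k(1-a)$ for $k = 0,1,2,\ldots$, so
\[
\e s^Z \;=\; (1-a)\sum_{k\geq 0}(as)^k \;=\; \frac{1-a}{1-as},
\]
valid for $|s|<1/a$.

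Next I would compute the pgf of $W := \sum_{k\geq 1} k A^{(k)}$. By independence of the $A^{(k)}$, the pgf factors:
\[
\e s^W \;=\; \prod_{k \geq 1} \e s^{k A^{(k)}} \;=\; \prod_{k \geq 1} \exp\!\left(\tfrac{a^k}{k}(s^k-1)\right),
\]
using the standard Poisson pgf $\e t^{A^{(k)}} = \exp(\e A^{(k)}(t-1))$ with $t = s^k$. Taking logarithms,
\[
\log \e s^W \;=\; \sum_{k \geq 1}\frac{(as)^k}{k} \;-\; \sum_{k \geq 1}\frac{a^k}{k} \;=\; -\log(1-as) + \log(1-a),
\]
where the key identity is the Taylor expansion $-\log(1-x) = \sum_{k\geq 1} x^k/k$ for $|x|<1$. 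Exponentiating gives $\e s^W = (1-a)/(1-as)$, matching $\e s^Z$. Since pgfs on a neighborhood of $s=0$ determine the distribution on $\BN_0$, this proves \eqref{compound Poisson general}.

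The only real obstacle is the convergence/justification step: one should check that the series $\sum_{k\geq 1} \e[k A^{(k)}] = \sum_k a^k = a/(1-a) < \infty$, so $W$ is almost surely finite, and the infinite product of pgfs converges absolutely for $|s|<1/a$, legitimizing the term-by-term manipulation above. Everything else is just the elegant match between the logarithmic series $\sum a^k/k$ appearing in the compound-Poisson structure and the geometric pgf, which is precisely the ``baby fact'' the lemma is packaging for later use with $a = 1/p$ and $A^{(k)} = A_{p^k}$.
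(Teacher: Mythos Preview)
Your proof is correct and follows essentially the same approach as the paper: compute the pgf of $Z$ and of $\sum_{k\geq 1} k A^{(k)}$, and match them via the logarithmic series $-\log(1-x)=\sum_{k\geq 1} x^k/k$. Your version is slightly more careful about convergence of the infinite product and finiteness of $W$, but the argument is otherwise identical.
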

\begin{proof}
Recall that the probability generating
functions of geometric and Poisson are given by
$$
\e s^Z=\sum_{j\geq 0} s^j (1-a) a^j = \frac{1-a}{1-as},
\ \ \e s^A=\sum_{j \geq 0} s^j e^{-a} a^j/j! = e^{a(s-1)},
$$
with $|s|<1$,
so that $\e s^{kA}=e^{a(s^k-1)}$ and $\e s^{k \Ak} = e^{((as)^k-a^k)/k}.$
Writing
$$
\log \frac{1-a}{1-as} = \sum_{k \geq 1} \frac{(as)^k -a^k}{k}
$$
proves (\ref{compound Poisson general}).
\end{proof}

Notice also that the last expression in
(\ref{compound Poisson general})
demonstrates stochastic domination $Z_p \geq_d
A_p$, so that in the expressions $|Z_p - A_p|$ in the argument
before (\ref{perturb}), the absolute value signs weren't needed!

For us,
$$
Z_p = \sum_{k \geq 1} k A_{p^k}
$$
with $p$ prime, $k\geq 1$, and
$$
q=p^k, \ \ A_q \sim \mbox{ Poisson}, \ \ \e A_q = \frac{1}{kq}.
$$
The coupling for primes described in Lecture 1 had a multiset with
$Z_p$ copies of $p$, taken in order of a size biased permutation.
A far more tractable coupling has a multiset of primes and
prime powers, with $A_q$ copies of the prime power $q=p^k$, and
the objects $Q_i$  (re-using the same notation $Q_i$ as before)
to be taken in size biased permutation are these prime
powers.  In the remainder of this, Section
\ref{sect geometric to Poisson}, we will study the coupling of
this
multiset of prime powers to the scale invariant Poisson.  Then in
Section \ref{coupling 2} we will take a size biased permutation,
to give our second coupling for growing a random integer.  We will
analyze this second coupling in detail; the first coupling for growing
a random integer, from Lecture 1,  can be
analyzed as a perturbation of the second coupling --- but we
won't present the details of the comparison, which are similar in
spirit to proofs of lemmas \ref{task 1} and \ref{task 2}.

The modified version of (\ref{1/p rate}) is
\begin{equation}\label{1/pk rate}
  \sum_{q = p^k \leq x} \frac{1}{kq} = \gamma + \log \log x \ +
  O \left(\exp(-c  \sqrt{\log x} \, )\right)
\end{equation}
and our modified version of (\ref{1/p f}) is
\begin{equation}\label{1/q f}
  f(x) := -\gamma + \sum_{q=p^k \leq e^x} \frac{1}{qk} \ \ \
   =\log x + O \left(\exp(-c  \sqrt{ x} \, )\right).
\end{equation}
Likewise, we modify $g$ to be essentially the  inverse of this $f$,
with $g(t)=0$ if $t \in (-\infty,-\gamma],$ and  if $q>1$ is a
prime power, $l= \log q$ and $\log x \in (f(l-),f(l)]$ then
$g(x)=l$.  We define $h = g \circ \log$, so that
our modified version of the map $h$, which can be applied to
the
points of the scale invariant Poisson process, has
$$
(0,e^{-\gamma}] \mapsto 0, \ \ (e^{-\gamma},e^{-\gamma+1/2}] \mapsto \log 2,
\ \ (e^{-\gamma+1/2},e^{-\gamma+1/2+1/3}] \mapsto \log 3,
$$
\begin{equation}\label{modified h}
(\exp(-\gamma+1/2+1/3),\exp(-\gamma+1/2+1/3+1/8)] \mapsto \log 4,
\end{equation}
and so on.  Notice that in the last endpoint given above we have
 $1/8 = 1/(kq)$
for $q=p^k=4$.  To summarize formally, $h$ is given by the recipe:  $h(x)=0$
if $0<x\leq e^{-\gamma}$, and if $q>1$ is a
prime power, $l= \log q$ and $\log x \in (f(l-),f(l)]$, then
$h(x)=l$, with $f$ as in (\ref{1/q f}).

\begin{theorem}
\label{Poisson 1/q thm}
Let $\X$ be the scale invariant Poisson process on $(0,\infty)$,
with intensity $(1/x) \ dx$, and points $X_i, i \in \BZ$.  Define
points $Q_i$ by $\log Q_i = h(X_i)$, for the function $h$ described at
(\ref{modified h}).
Then every $Q_i$ is either
one or a prime power, and for every  $q=p^k$, with $p$ prime
and $k \geq 1$,  $A_q := \sum_i 1(Q_i=q)$
is Poisson distributed, with $\e A_q = 1/(kq)$.  The $A_q$ are
mutually independent, and with $f$ defined by (\ref{1/q f}),
\begin{equation}\label{Poisson 1/q distance}
  \e \sum_{i \in \BZ} | X_i - \log Q_i |  \
  = \int_0^\infty |f(x) - \log x | \ dx  \ < \infty.
\end{equation}
\end{theorem}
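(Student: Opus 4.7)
The plan is to mirror the proof of Theorem \ref{Poisson 1/p thm} almost verbatim, with prime powers $q = p^k$ playing the role that primes $p$ played there, and the constant $\gamma$ replacing $B$. Concretely, I would construct the scale invariant Poisson $\X$ from the translation invariant Poisson $\L$ on $\BR$ (unit intensity) through the exponential map $X_i = e^{L_i}$, so that $\log Q_i = h(X_i) = g(L_i)$. By construction (\ref{modified h}), the step function $g \colon \BR \to \{0\} \cup \{\log q : q = p^k,\, k \geq 1\}$ partitions $\BR$ into one bin $(-\infty, -\gamma]$ assigned value $0$, together with one bin of length $1/(kq)$ assigned value $\log q$ for each prime power $q = p^k$; the right endpoints of successive bins are the values of $f$ from (\ref{1/q f}), and (\ref{1/pk rate}) guarantees that the cumulative length is finite at every threshold and eventually exhausts all of $\BR$.

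Given this bin structure, the first three claims drop out of the definition of a Poisson process: restrictions of $\L$ to disjoint Borel sets are independent Poisson processes whose counts are Poisson with means equal to Lebesgue measure. Hence each $A_q := \#\{i : L_i \in g^{-1}(\log q)\}$ is Poisson with mean $1/(kq)$, the $A_q$ are mutually independent across all prime powers, and $\log Q_i = g(L_i)$ forces $Q_i$ into $\{1\} \cup \{p^k : p \text{ prime}, \, k \geq 1\}$. For the distance bound, Campbell's formula (Fubini for Poisson point processes), applied exactly as in (\ref{area bound}), gives
\begin{equation*}
\e \sum_{i \in \BZ} |X_i - \log Q_i| \;=\; \e \sum_i |e^{L_i} - g(L_i)| \;=\; \int_{-\infty}^\infty |e^l - g(l)| \, dl,
\end{equation*}
which, interpreted as the area between the graph of $\exp$ and the step graph of $g$, coincides after reflection across the diagonal with $\int_0^\infty |f(x) - \log x| \, dx$ (inserting vertical risers across the jumps of $f$ and $g$, as in the earlier argument).

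Finiteness of this last integral splits into the two usual pieces: at infinity, (\ref{1/q f}) gives an integrand of order $\exp(-c\sqrt{x})$, hence integrable; near zero, $f \equiv -\gamma$ on $(0, \log 2)$ since no prime power is below $2$, so the contribution near $0$ is $\int_0^{\log 2} (-\gamma - \log x) \, dx < \infty$. The only real obstacle is a bookkeeping check: one must confirm that listing the prime powers in order of magnitude ($2,3,4,5,7,8,9,11,13,16,\ldots$) and accumulating $1/(kq)$ bin lengths starting from $-\gamma$ produces precisely the function $f$ of (\ref{1/q f}). Once this is in place, the proof transposes line for line from Theorem \ref{Poisson 1/p thm}; the substantive novelty relative to that theorem lies not in the proof itself but in its use, since the compound-Poisson identity (\ref{compound Poisson general}) then converts the resulting multiplicities $A_{p^k}$ into the geometric multiplicities $Z_p = \sum_k k\, A_{p^k}$ needed to recover the coupling of Lecture 1.
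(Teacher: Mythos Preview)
Your proposal is correct and follows essentially the same approach as the paper: construct $\X$ from the translation invariant Poisson $\L$ via the exponential map, read off the independence and Poisson($1/(kq)$) distribution of the $A_q$ from the disjoint-bin structure of $g$, and then use Fubini/Campbell to express the expected sum as $\int_{-\infty}^\infty |g(l)-e^l|\,dl = \int_0^\infty |f(x)-\log x|\,dx$, with finiteness checked near $0$ via $f\equiv -\gamma$ on $(0,\log 2)$ and at infinity via the bound in (\ref{1/q f}). Your added remarks about the bin bookkeeping and the downstream use of (\ref{compound Poisson general}) are accurate commentary but not part of the proof proper.
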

\begin{proof}
Constructing $\X$ from $\L$ as in the discussion above, so that
$X_i = \exp(L_i)$, we have $\log Q_i = h(X_i) = g(L_i)$.
This construction makes it obvious that the $A_q$ are independent, Poisson, with
$\e A_{p^k} = 1/(kq)$.  To prove (\ref{Poisson 1/q distance}), we argue that
\begin{equation}\label{area bound q}
\e \sum_{i \in \BZ} |\log Q_i -X_i |
= \e \sum |g(L_i)-\exp(L_i)| = \int_{-\infty}^\infty |g(l)-e^l| \ dl =: b_0,
\end{equation}
where we have used Fubini to express an expectation of a sum over
points of $\L$ in terms of the intensity of $\L$. Now
\begin{equation}\label{b0}
b_0 = \int_{-\infty}^\infty |g(l)-e^l| \ dl = \int_0^\infty |f(x)-\log x| \ dx <
\infty,
\end{equation}
where the equality holds because $b_0$
may be interpreted as the area between the
graphs of $g$ and the exponential function, and equals
the area between the graphs of $f$ and the logarithm.
The convergence of this last integral at infinity follows from the bound
(\ref{1/q f}), and the convergence of the integral
at 0  follows
simply as $\int_0^{\log 2} |f(x)-\log x| \ dx $
$= \int_0^{\log 2} (-\gamma - \log x) \ dx
< \infty$.
\end{proof}

\subsection{The size biased permutation of the multiset having
$\log p^k$ with multiplicity
$A_{p^k} \sim$ Poisson$(1/(kp^k))$}
\label{coupling 2}

Just as the multiset with $Z_p$ copies of each prime $p$, for
independent $Z_p \sim$
geometric($1/p)$, is the ``natural random infinite
multiset of primes,'' the multiset with $A_q$ copies of each prime
power $q=p^k>1$, for independent $A_q \sim $ Poisson($1/(kq))$,
is the \emph{ natural random infinite multiset of prime powers}.
The latter multiset may be viewed as the former, with auxiliary
randomization, picking a partition of the integer $Z_p$,
independently for
each $p$.

[Incidentally, the amount of additional
information in our partitioning of the $Z_p$
is small; it is approximately .612433379 bits,
computed as follows.
Recall that for
a discrete random variable $X$ with $\p(X=X_i)=p_i>0$,
$\sum p_i =1$, the \emph{entropy} is $h(X) := -\sum p_i \log p_i$.
For the geometrically distributed $Z$ in
(\ref{compound Poisson general}),
$h(Z)=-\log(1-a)-a/(1-a)\, \log a$
$=\sum_{k \geq 1} (a^k/k)(1+\log(1/a^k))$,
and for $A \sim$ Poisson($x$) we have
$h(A)=x+x \log(1/x) + \sum_{j \geq 2} \p(A \geq j)\log j$,
which we apply with $x=a^k/k$ for $k=1,2,3,\ldots \, .$
Recall that the entropy of an independent process, such as
$(A^{(1)},A^{(2)},\ldots)$, is the sum of the entropies of the
coordinates.
Thus in (\ref{compound Poisson general}),
the additional information needed to partition $Z$ into
$\sum_{k \geq 1} k \Ak$ is $d(a):=\sum_{k \geq 1} h(\Ak)\ - h(Z)=$
\begin{equation}
\label{entropy increment}
 \sum_{k \geq 1} \left(
\frac{a^k}{k}
+ \frac{a^k}{k} \log\frac{k}{a^k}
+ \sum_{j \geq 2} \p(\Ak \geq j)\log j
\right)
-
 \sum_{k \geq 1} \frac{a^k}{k}(1 + \log \frac{1}{a^k})
\end{equation}
$=a^2 \log 2 +O(a^3 \log a)$ as $a \rightarrow 0+$.
Numerical evaluation, with logs taken base 2,  gives
$d(1/2) \doteq $.375076
as the additional information needed to partition
$Z_2$, approximately .13879 for $Z_3$, and approximately
.612433379 for the
sum over all primes.]

In Lecture 1 we described a procedure for ``growing'' a random
integer, based on a size biased permutation of the multiset having
each prime $p$, taken as an object of weight $\log p$,  with
multiplicity $Z_p \sim $ Geometric ($1/p)$.  While relatively simple
to
describe, this coupling is rather hard to analyze directly.  The
reason that it is hard to analyze directly is that the size biased
permutation involves using  exponentially distributed
labels $W_i$, and the resulting two-dimensional process, with points of
the form $(W_i,\log Q_i),$ does not have a simple structure.  By
changing the objects to be prime powers $q=p^k$, with weight
$\log q$
and multiplicity $A_q \sim $ Poisson $1/(kq)$ the total
number of factors of $p$ is still distributed as $Z_p \sim $
Geometric $(1/p)$, because $\sum_{k \geq 1} k A_{p^k} \indist
Z_p$,
but now the size biased permutation is tractable.

The size biased permutation is tractable  because
attaching conditionally independent labels $W_i$
to the Poisson process with points $\log Q_i$ yields a
two-dimensional Poisson process with points $(W_i,\log Q_i)$  --
this is an instance of the ``labeling'' theorem; see for example
\cite{kingman}.

This process $\{ (W_i,\log Q_i): \ i \in \BN \}$
is very close to the Poisson process of Section
\ref{sect exp(-wy)} on $(0,\infty)^2$ with intensity
$e^{-wy} \ dw \ dy $, which was used to give a size biased
permutation of the scale invariant Poisson process.  However,
our two-dimensional process now is neither discrete nor
continuous.  It is   supported on a
one dimensional subset of the positive quadrant, formed
by the half lines
$w>0, y=\log q$ for some prime power $q=p^k>1$.  Its intensity
on the line $y =\log q$
is the product of the discrete intensity, $f_Y(y) = 1 / (kq)$ for the weight
$y$, times the continuous density for exponentially distributed
label $w$, $f_{W|Y}(w|y) \ dw = y e^{-wy} \ dw$.  That is,
for $y = \log q, \, q = p^k,$
\begin{equation}\label{2d intensity for primes}
f_{W,Y}(w,y) \ dw =f_Y(y)\  f_{W|Y}(w|y) \ dw
= \frac{\log q}{kq} \  e^{-wy} \ dw.
\end{equation}

The next lemma gives an exact expression for the density of
$J(n)$, and will let us see in Lemma \ref{lemma j bound}
that the distribution of
$J(n)$ is close to the harmonic distribution (\ref{harmonic})
on $1,2,\ldots,n$:
in (\ref{simpler 1}),
$d(\log \log n^\beta)$ is close to $d\beta / \beta$,
and the zeta function has a simple pole at one,
so the expression in (\ref{simpler 1}) is close
to
\begin{equation}\label{compare harmonic}
 \frac{1}{i} \ \int_{\beta > 1-\alpha} \frac{d \beta}{\beta} \
\int_{c>0} \beta e^{-\beta c}  e^{-\alpha c} \frac{c}{\log n} \ dc
\ = \ \ \frac{1}{i \log n}.
\end{equation}

\begin{lemma}\label{density of J lemma}
\label{density of J}
For the $Q_i$ taken in order of decreasing labels $W_i$, let
$J(n)$ be the largest partial product
with $J(n) \leq n$.  Then for $i=n^\alpha =1$ to $n$,
with $q=p^k=n^\beta$ ranging over prime powers,
\begin{equation}\label{decompose}
\p(J(n)=i) = \sum_{q: \ \beta > 1-\alpha} \frac{1}{kq} \
  \int_{c>0} \beta  \ e^{-\beta c  } \  \
\frac{i^{-1-c/\log n}}{\zeta(1+c/\log n)} \ dc
\end{equation}
\begin{equation}\label{simpler 1}
= \frac{1}{i} \  \sum_{q: \ \beta > 1-\alpha} \frac{1}{kq} \
  \int_{c>0} \beta  \ e^{-\beta c  } \  \
\frac{e^{-\alpha c}}{\zeta(1+c/\log n)} \ dc.
\end{equation}
\end{lemma}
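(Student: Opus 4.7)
The strategy is to realise $J(n)$ in terms of the 2D Poisson point process $\Pi$ on $\{(w,\log q):w>0,\,q=p^k\text{ a prime power}\}$ with intensity $\frac{\log q}{kq}e^{-w\log q}\,dw$, and to identify the event $\{J(n)=i\}$ as a sum of indicators over atoms of $\Pi$. Order the atoms by decreasing $W$-coordinate, so that $Q_1,Q_2,\ldots$ is the size-biased sequence and $J(n)=Q_1\cdots Q_L$ with $L=\max\{k:Q_1\cdots Q_k\le n\}$. Define the ``prefix product'' $M_w:=\prod_{j:\,W_j>w}Q_j$, a right-continuous step function of $w$. Then $\{J(n)=i\}$ coincides with the existence of a (necessarily unique) atom $(w,\log q)\in\Pi$ satisfying $M_w=i$ and $q>n/i$, namely the atom whose inclusion first pushes the partial product above $n$. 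Consequently
\[
\p(J(n)=i)\;=\;\e\sum_{(w,\log q)\in\Pi}\mathbf{1}\bigl\{M_w=i,\ q>n/i\bigr\}.
\]

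Applying the Campbell/Mecke formula together with Slivnyak's theorem (conditional on an atom of $\Pi$ at $(w,\log q)$, the remainder of $\Pi$ is an independent copy of $\Pi$; and because $M_w$ uses only points with $W_j>w$ strictly, the added atom does not contribute to it), the right side equals
\[
\p(J(n)=i)\;=\;\int_0^\infty\sum_{q=p^k>n/i}\frac{\log q}{kq}\,e^{-w\log q}\,\p(M_w=i)\,dw .
\]
To evaluate $\p(M_w=i)$, note that the number of atoms of $\Pi$ on the line $y=\log p^k$ with label exceeding $w$ is Poisson with mean $\int_w^\infty\frac{\log p^k}{kp^k}e^{-w'\log p^k}\,dw'=\frac{1}{k\,p^{k(1+w)}}$, and these counts are independent over $(p,k)$. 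The compound-Poisson = geometric identity of the paper's lemma, applied with $a=p^{-(1+w)}$, then shows that the total exponent of $p$ in $M_w$ is geometric with parameter $p^{-(1+w)}$, independently over $p$. Writing $i=\prod_p p^{c_p}$ and using the Euler product $\prod_p(1-p^{-s})=1/\zeta(s)$ (valid for $s>1$) yields
\[
\p(M_w=i)\;=\;\prod_p\bigl(1-p^{-(1+w)}\bigr)\,p^{-c_p(1+w)}\;=\;\frac{i^{-(1+w)}}{\zeta(1+w)} .
\]

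Finally I would change variables $w=c/\log n$, so that $e^{-w\log q}=e^{-\beta c}$, $i^{-(1+w)}=i^{-1-c/\log n}$, and $\frac{\log q}{kq}\,dw=\frac{\beta}{kq}\,dc$; the threshold $q>n/i$ becomes $\beta>1-\alpha$. Substituting produces (\ref{decompose}) directly, and factoring $i^{-1-c/\log n}=(1/i)\,e^{-\alpha c}$ yields (\ref{simpler 1}). The main obstacle is the middle step: recognising the product form of $\p(M_w=i)$ and identifying it as $i^{-(1+w)}/\zeta(1+w)$ via the compound-Poisson identity together with the Euler product. Once that factorisation is in hand, both the Campbell/Mecke/Slivnyak manipulation and the change of variables are routine.
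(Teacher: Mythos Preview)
Your proof is correct and follows essentially the same route as the paper. The paper defines $I_t=\prod_p p^{Z_p(t)}$ (your $M_w$), derives $\p(I_t=i)=i^{-1-t}/\zeta(1+t)$ via the same compound-Poisson/Euler-product computation, and then considers the joint distribution of the ``overshoot'' triple $(Q^*(n),T(n),J(n))$ --- the new factor, its label, and the product just before --- and integrates out; your Campbell/Mecke/Slivnyak step is exactly this marginalization, phrased in point-process language, and the change of variables $w=c/\log n$ is identical.
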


\begin{proof}
Write the labels $W_i$ as $W_i=S_i/\log Q_i$ as in
Section \ref{size bias section}, with $S_1,S_2,\ldots$
being  iid standard exponentials, independent of $Q_1,Q_2,\ldots \ .$
For any prime power $q > 1$ and
constant $t \geq 0$, the number $
A_q(t) := \sum_i \bone (Q_i=q,S_i/\log q >t)$
of occurrences of $q$ with label greater than $t$ is
Poisson distributed with $\e A_q(t) = \e A_q \ \p(S_i/\log q >t)$
$= 1/(kq) \ q^{-t}$. The  $A_q(t)$
jointly for all $q$ are
independent.  For each prime $p$ let
\begin{equation}\label{def Zpt}
Z_p(t) := \sum_{k \geq 1} k A_{p^k}(t).
\end{equation}
Note that by (\ref{compound Poisson general})
the distribution of $Z_p(t)$ is geometric
with $\p (Z_p(t) \geq~j)$   $ = (1/p^{1+t})^j$ and
\begin{equation}\label{dist Zpt}
\p (Z_p(t)=j) = (1 - p^{-1-t}) \ \left( \frac{1}{p^{1+t}} \right)^j.
\end{equation}

For any $t>0$, consider the product
$I_t$ of all primes and prime powers
having label strictly greater than $t$, i.e.
$$
I_t :=  \prod_{q=p^k} q^{A_q(t)} = \prod p^{Z_p(t)}.
$$
Using (\ref{dist Zpt}) and
the independence of the $Z_p(t)$ over all primes $p$,
 for any $i \geq 1$ we have
\begin{equation}\label{dist It}
\p (I_t = i) = \prod (1-p^{-1-t}) \ \ i^{-1-t} \ \ = \frac{i^{-1-t}}{\zeta(1+t)}.
\end{equation}

With probability one, all labels $W_i$ are distinct.
For any $t>0$ there are, with probability one, only
finitely many labels greater than $t$  --- this follows from
(\ref{dist It}), which summed over $i=1,2,\ldots$ yields 1.
There are infinitely many labels, with probability one,
because the total intensity of our Poisson process is
infinite --- it is $\sum_{q=p^k} \e A_q = \sum_{q=p^k} 1/(kq)$
$ > \sum_p (1/p) = \infty$.   For these three reason combined,
with probability one, as $t$ decreases from infinity to zero, the partial
products $I_t$ increase from 1 to infinity, and each increase corresponds
to factoring in one new factor $Q_i$, taken in decreasing order of
their labels $L_i= S_i/ \log Q_i$.

 Let $Q^*(n)$  be the new factor that first
takes the partial product beyond $n$, and let  $T(n)$ be its label.
Our product $J(n)$ will equal $I_t$ for $t=T(n)$.
We consider the joint distribution of $(Q^*(n),T(n),J(n))$.
Write $i = n^\alpha$ for the test value for $J(n)$,
$q=p^k=n^\beta$ for the test value for  $Q^*(n)$, and
$c$ for  $ T(n)  \log n$, which, as $\log n$ times the
label for $q$,  is
exponentially distributed with rate $\log q/ \log n = \beta$.
Summing over $q$ and integrating over $c$ yields
(\ref{decompose}) as the marginal distribution of $J(n)$,
and simplifying yields (\ref{simpler 1}).
\end{proof}

\begin{lemma}\label{lemma j bound}
Let $H(n)$ have the harmonic distribution
(\ref{harmonic}) on 1 to $n$, so that for $i \leq n$,
$\p(H(n)=i) = 1/(i h_n)$.
For the $J(n)$ in Lemma \ref{density of J lemma}, based on a size
biased permutation of the natural infinite Poisson multiset of
prime powers,
\begin{equation}\label{dtv to H}
d_{TV}(J(n),H(n)\,) := \sum_{i \leq n} |\p(J(n)=i)-\p(H(n)=i)|
 = O \left( \frac{1}{\log n}\right).
\end{equation}
Furthermore, the relative error in approximating
the density of $J(n)$ by the harmonic density is $O(1/\log n)$,
uniformly:
\begin{equation}\label{point for j}
\max_{1 \leq i \leq n} \left| \frac{ \p(J(n)=i)}{\p(H(n)=i)} - 1 \right|
= O \left( \frac{1}{\log n}\right).
\end{equation}
\end{lemma}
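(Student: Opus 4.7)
The plan is to derive both statements from the exact density formula \eqref{simpler 1} of Lemma \ref{density of J lemma}. Writing $\alpha = \log i / \log n$, the ratio $\p(J(n)=i)/\p(H(n)=i)$ equals $h_n$ times
$$I(\alpha, n) := \sum_{q:\ \beta > 1 - \alpha} \frac{1}{kq} \int_0^\infty \beta\, e^{-\beta c}\, \frac{e^{-\alpha c}}{\zeta(1 + c/\log n)}\, dc,$$
with $\beta = \log q / \log n$. Since $h_n = \log n + \gamma + O(1/n)$, it is enough to prove $I(\alpha, n) = 1/\log n + O(1/(\log n)^2)$ uniformly in $\alpha \in [0, 1]$; the pointwise bound \eqref{point for j} then follows, and summing $|\p(J(n)=i)-\p(H(n)=i)| \leq \p(H(n)=i)\cdot O(1/\log n)$ over $i$ produces the total variation bound \eqref{dtv to H}.

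First I would swap the order of summation and integration (absolute convergence) and study the inner sum
$$T(\alpha, c, n) := \sum_{q:\ \log q > (1-\alpha)\log n} \frac{\log q}{kq \log n}\, e^{-c \log q / \log n}.$$
By \eqref{1/q f}, the step function $F(x) := \sum_{q \leq e^x} 1/(kq)$ satisfies $F(x) = \gamma + \log x + r(x)$ with $r(x) = O(\exp(-c\sqrt{x}))$. Writing $T(\alpha, c, n) = \int (x/\log n)\, e^{-cx/\log n}\, dF(x)$, substituting $\beta = x/\log n$, and separating the $d\log x = dx/x$ part from $dr$ yields
$$T(\alpha, c, n) \;=\; \int_{1-\alpha}^\infty e^{-\beta c}\, d\beta \; + \; R_1(\alpha, c, n) \;=\; \frac{e^{-(1-\alpha)c}}{c} \; + \; R_1(\alpha, c, n),$$
where $R_1$ is the error induced by $r$. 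Partial summation combined with the superpolynomial smallness of $r$ bounds $R_1$ by $O((\log n)^{-N})$ for every $N$ on compact $c$-sets, while the $\beta$-prefactor in the test function controls the small-$c$ behavior.

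Next, inserting this approximation and noting the collapse $e^{-\alpha c}\cdot e^{-(1-\alpha)c} = e^{-c}$ gives, independently of $\alpha$,
$$I(\alpha, n) \;=\; \int_0^\infty \frac{e^{-c}}{c\,\zeta(1 + c/\log n)}\, dc \; + \; R_2(\alpha, n),$$
with $R_2$ controlled by $R_1$. The Laurent expansion $\zeta(1+s) = 1/s + \gamma + O(s)$ yields
$$\frac{1}{c\,\zeta(1 + c/\log n)} \;=\; \frac{1}{\log n} \; - \; \frac{\gamma c}{(\log n)^2} \; + \; O\!\left(\frac{c^2}{(\log n)^3}\right),$$
and term-by-term integration against $e^{-c}\, dc$ (using $\int_0^\infty c^k e^{-c}\, dc = k!$) evaluates the main integral as $1/\log n - \gamma/(\log n)^2 + O(1/(\log n)^3)$. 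Multiplication by $h_n = \log n + \gamma + O(1/n)$ then gives $h_n\, I(\alpha, n) = 1 + O(1/\log n)$, uniformly in $\alpha$, as required. Crucially, the $1/c$ singularity from the inner $\beta$-integral is exactly canceled by the vanishing of $1/\zeta(1 + c/\log n)$ at $c=0$, which is what makes the main integral finite and of order $1/\log n$.

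The main obstacle is the uniform control of $R_1$ in step one, particularly near the endpoints: for $\alpha$ near $0$ (i.e.\ $i = 1$) the sum runs only over prime powers exceeding $n$, so the sum is thin and the PNT error bound must be leveraged carefully via partial summation; for small $c$, the $1/c$ factor in the main term demands that the sum-versus-integral discrepancy vanish at the right rate. Both difficulties are handled by combining the $e^{-\beta c}$ decay of the test function with the $O(\exp(-c\sqrt{x}))$ strength of \eqref{1/q f}, which is much better than what the argument ultimately needs.
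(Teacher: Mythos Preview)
Your approach is correct in spirit and reaches the same conclusion, but it is organized differently from the paper's proof and contains one overstated claim.

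The paper first linearizes $1/\zeta(1+\delta)$ as $\delta$ (using only the global bound $|1/\zeta(1+\delta)-\delta|\le C_1\delta^2$), then performs the $c$-integral for each fixed $q$ to obtain the explicit weight $(\alpha+\beta)^{-2}$, and finally handles the resulting sum $\frac{1}{\log n}\sum_{q>n^{1-\alpha}}\frac{\log q}{kq}(\alpha+\beta)^{-2}$ by Abel summation. For that last step only Chebyshev's elementary bound $\sum_{q\le x}\frac{\log q}{kq}=\log x+O(1)$ is needed. You instead swap the order, summing over $q$ first (against a $c$-dependent exponential weight) and then integrating in $c$. This yields the pleasant observation that the main term $\int_0^\infty e^{-c}/(c\,\zeta(1+c/\log n))\,dc$ is independent of $\alpha$, making uniformity in $i$ transparent at that stage.

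However, your claim that $R_1=O((\log n)^{-N})$ is not correct uniformly in $\alpha$: it holds when $(1-\alpha)\log n$ is large, but when $\alpha$ is near $1$ the lower summation limit sits near $\log 2$, where $r$ is only $O(1)$, and partial summation with your test function $\phi(x)=\frac{x}{\log n}e^{-cx/\log n}$ yields at best $R_1=O(1/\log n)$. This is still enough, because the factor $1/\zeta(1+c/\log n)\le c/\log n$ in the outer integral donates the second power of $1/\log n$; but the argument should be written accordingly. A cleaner fix is to represent $T$ as $\int \psi\,dG$ with $G(x)=\sum_{q\le e^x}\frac{\log q}{kq}$ and $\psi(x)=\frac{1}{\log n}e^{-cx/\log n}$: since $\psi$ is monotone, Abel summation with only Chebyshev's $G(x)=x+O(1)$ gives $|R_1|=O(\psi(a))=O(e^{-(1-\alpha)c}/\log n)$ uniformly, and hence $|R_2|=O(1/(\log n)^2)$ directly. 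With this adjustment your route works and needs no more than Chebyshev, matching the paper's input strength; the PNT-strength estimate \eqref{1/q f} you invoke is then unnecessary.
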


\begin{proof}
Define $ d_n(i) := (i \log n) \p(J(n)=i)  $ so that  (\ref{simpler 1})
can be rewritten as
\begin{equation}\label{simpler 2}
d_n(i)  =  \  \sum_{q: \ \beta > 1-\alpha} \frac{1}{kq} \
  \int_{c>0} \beta  \ e^{-\beta c  } \  \
\frac{e^{-\alpha c} \log n }{\zeta(1+c/\log n)} \ dc.
\end{equation}
Since $\p(H(n)=i) = 1/(i h_n) = 1/(i \log n) \ (1+O(1/\log n))$,
showing
(\ref{point for j}) is equivalent to showing
$ \max_{1 \leq i \leq n} |d_n(i)-1|  =O(1/\log n)$.

In order to simplify (\ref{simpler 2}), we apply the
following
with $\delta = c/\log n$.
 From the well known $\zeta(1+\delta)=(1/\delta) + \gamma + O(\delta)$
as $\delta \rightarrow 0+$, we get $1/\zeta(1+\delta) =
\delta(1-\gamma \delta + O(\delta^2))= \delta - O(\delta^2)$
as $\delta \rightarrow 0+$.
It follows that $\exists C_1, |1/\zeta(1+\delta) - \delta|
\leq C_1 \delta^2$ for all $\delta > 0$.
This motivates us to consider a
first order approximation to the right side of
(\ref{simpler 2}), defined by
$$
e_n(i) :=   \sum_{q: \ \beta > 1-\alpha} \frac{1}{kq} \
  \int_{c>0} \beta  \ e^{-\beta c  } \  \
c \ e^{-\alpha c} \ dc.
$$
Our goal  is to show
that uniformly in $1 \leq i \leq n$, $e_n(i)=  1+ O(1/\log n)$; having
this, the
error estimate for $d_n(i)$ versus $e_n(i)$ will be  virtually the same
computation.

Note that since $\int_{c>0}c \, e^{-(\alpha+\beta)c} \ dc$
$=(\alpha+\beta)^{-2}$, and $ \beta = \log q/\log n$, we have
$$
e_n(i)= \frac{1}{\log n} \sum_{q>n^{1-\alpha}} \frac{\log q}{kq}
(\alpha+\beta)^{-2}.
$$
Instead of (\ref{1/q f}) we only need a crude bound, due to
Chebyshev, that
$$
R(x) := \sum_{q=p^k \leq x} \frac{\log q}{kq} \ -\log x =O(1).
$$
Fix $n$ and $i=n^\alpha$, $1 \leq i \leq n$, and define
$$
S_t := \sum_{n^{1-\alpha}<q \leq n^t} \frac{\log q}{kq} =
(t-(1-\alpha)) \log n - R(n/i) + R(n^t),
$$
so that by Abel summation
$$
e_n(t) = \frac{1}{\log n} \int_{t \in (1-\alpha,\infty)}  dS_t \ (\alpha+t)^{-2}
= \frac{1}{\log n}  \int_{(1-\alpha,\infty)}  S_t \  2
(\alpha+t)^{-3} \ dt.
$$
The contribution at infinity to the Abel summation is zero
since $S_t \sim t \log n$ as $t \rightarrow \infty$. From
$\sup_{x > 0} R(x) < \infty$, and
$\int_{(1-\alpha,\infty)} 2 (t+\alpha-1) $
$(\alpha+t)^{-3} \ dt
=1$ it follows that
$\max_{1\leq i\leq n} |e_n(i)-1| = O(1/\log n)$.

Finally, using $|1/\zeta(1+\delta) - \delta|
\leq C_1 \delta^2$, we have
$$
|d_n(i)-e_n(i)| \leq
\frac{C_1^2}{\log n}
  \sum_{q: \ \beta > 1-\alpha} \frac{1}{kq} \
  \int_{c>0} \beta  \ e^{-\beta c  } \  \
c^2  \ e^{-\alpha c} \ dc,
$$
$$
=\frac{C_1^2}{(\log n)^2} \int_{(1-\alpha,\infty)} \ dS_t \  2
(\alpha+t)^{-3} \ dt = O\left(\frac{1}{\log n} \right).
$$
\end{proof}

\subsection{Filling in the extra prime factor}\label{sect extra}

As in Lecture 1,
we take $P_0(n)$ to be one or prime (and not a prime power!),
such that $J(n) P_0(n) \leq n$, picking uniformly over the
$1+\pi(n/J(n))$ possibilities.

\ignore{For the sake of keeping track of the source of error, we also
consider
 $P_0^H(n)$ to be one or prime,
such that $H(n) P_0(n) \leq n$, picking uniformly over the
$1+\pi(n/H(n))$ possibilities.
WE WILL OMIT P_0^H, and use the density ratio bound for J versus H directly}

With the notation $f_n(i) := \p (J(n) P_0(n) = i)$, the total
variation distance in the next lemma is $d_{TV}( J \, P_0, \ N \ )=$
\begin{equation}\label{3 forms of dtv}
 \sum_{1 \leq i \leq n} \left(f_n(i)-\frac{1}{n}\right)^+
= \sum_{1 \leq i \leq n} \left(f_n(i)-\frac{1}{n}\right)^-
= \frac{1}{2} \sum_{1 \leq i \leq n} \left|f_n(i)-\frac{1}{n}\right|.
\end{equation}

\begin{lemma}\label{lemma jp bound}\label{lemma dtv 2}
The total variation
distance
between the distribution of $J(n) P_0(n)$ and the uniform
distribution satisfies
\begin{equation}\label{dist to N goal}
d_{TV} ( N(n),  \ J(n) \, P_0(n) \ ) =
O \left( \frac{\ll n}{\log n} \right).
\end{equation}
\end{lemma}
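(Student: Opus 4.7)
The plan is to reduce to a harmonic approximation of $J(n)$ using Lemma \ref{lemma j bound}, carry out a first-order prime number theorem (PNT) expansion of $1/(1+\pi(\cdot))$, and reduce the remaining bound to two elementary sums involving $\omega(i)$ and the radical $\mathrm{rad}(i) := \prod_{p \mid i} p$.

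\textbf{Reduction to the harmonic model.} By the pointwise ratio bound (\ref{point for j}), $\p(J(n)=j) = \p(H(n)=j)(1 + O(1/\log n))$ uniformly in $j \in [n]$. Defining $\tilde f_n(i) := \p(H(n) P_0^H(n) = i)$, where $P_0^H$ is chosen from $H$ exactly as $P_0$ is chosen from $J$, we get $f_n(i) = \tilde f_n(i)(1 + O(1/\log n))$, and hence $d_{TV}(JP_0, HP_0^H) = O(1/\log n)$. This reduces the problem to showing
\[
\sum_{i \leq n} |\tilde f_n(i) - 1/n| = O(\log \log n / \log n).
\]

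\textbf{PNT expansion.} With $D(i) := \{1\} \cup \{p \text{ prime} : p \mid i\}$, we have
\[
\tilde f_n(i) = \frac{1}{i\,h_n} \sum_{p \in D(i)} \frac{p}{1+\pi(np/i)}.
\]
The PNT gives $|1/(1+\pi(x)) - (\log x)/x| = O(1/x)$ uniformly for $x \geq 1$, so $|p/(1+\pi(np/i)) - (i/n)\log(np/i)| = O(i/n)$ for each $p \in D(i)$. Therefore
\[
\tilde f_n(i) = \frac{T(i)}{n\,h_n} + O\!\left(\frac{1+\omega(i)}{n\,h_n}\right), \qquad T(i) := (1+\omega(i))\log(n/i) + \log \mathrm{rad}(i).
\]
Summing the error term over $i \leq n$ using the classical $\sum_{i \leq n}\omega(i) = O(n \log \log n)$ contributes $O(\log \log n/\log n)$ to the target.

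\textbf{Bounding the main term.} Using $h_n = \log n + O(1)$, algebraic rearrangement gives
\[
T(i) - h_n = \omega(i)\log(n/i) - \log(i/\mathrm{rad}(i)) + O(1),
\]
so by the triangle inequality $|T(i) - h_n| \leq \omega(i)\log(n/i) + \log(i/\mathrm{rad}(i)) + O(1)$. Three elementary estimates finish the job: first, $\sum_i \omega(i)\log(n/i) = \sum_{p \leq n}(\lfloor n/p \rfloor \log(n/p) - \log \lfloor n/p \rfloor !) = \sum_p \lfloor n/p \rfloor + O(n) = O(n \log \log n)$ by Stirling and Mertens; second, $\sum_i \log(i/\mathrm{rad}(i)) = \sum_p \log p \sum_{k \geq 2}\lfloor n/p^k \rfloor = O(n \sum_p (\log p)/p^2) = O(n)$; and trivially $\sum_i O(1) = O(n)$. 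Dividing the resulting $O(n \log \log n)$ by $n\,h_n \sim n \log n$ yields the desired bound.

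\textbf{Main obstacle.} The delicate point is the PNT approximation at the boundary, specifically for the $p=1$ term when $i$ is close to $n$, where $x = n/i$ is $O(1)$ and the PNT asymptotic degenerates. The uniform bound $|1/(1+\pi(x)) - (\log x)/x| = O(1/x)$ still holds for all $x \geq 1$, since both quantities are bounded by absolute constants in this regime; hence the error $O(i/n)$ remains valid uniformly, and the aggregate contribution is absorbed into the overall $O(\log \log n / \log n)$ estimate.
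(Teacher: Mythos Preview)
Your proof is correct and follows essentially the same route as the paper's: reduce to the harmonic model via the pointwise ratio bound (\ref{point for j}), replace $x/(1+\pi(x))$ by a PNT approximation, and bound the resulting sum $\sum_m |h_n(m)-1/n|$ by controlling the contributions from $\omega(m)\log(n/m)$ and $\log(m/\mathrm{rad}(m))$. The only cosmetic differences are that the paper uses the sharper approximation $x/(1+\pi(x))\approx \log x -1$ and handles the $\omega(m)\log(n/m)$ term by Cauchy--Schwarz rather than your direct summation, but neither change affects the argument or the final bound.
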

\begin{proof}

Since $P_0$ is one or prime, for $1 \leq m \leq n$,
\begin{equation}
\label{ph exact}
f_n(m):= \p(J(n) \, P_0(n)=m) = {\sum_{p|m}}^* \p\left(J(n)= \frac{m}{p}\right) \
\frac{1}{1+\pi(np/m)},
\end{equation}
where $\sum^*$ indicates that the index $p$  ranges over prime divisors of $m$
\emph{also allowing } $p=1$.  Using $1/(i \log n)$ as an approximation for
$\p(J(n)=i)$, we consider the simpler expression
$$
g_n(m):= \frac{1}{n} {\sum_{p|m}}^* \frac{1}{\log n} \, \frac{np}{m} \,
\frac{1}{1+\pi(np/m)}.
$$
We have
\begin{equation}
\label{ph simple}
\sup_{1\leq m \leq n} |1-f_n(m)/g_m(n)| = O(1/\log n)
\end{equation}
thanks to (\ref{point for j}),
and hence also
$\sum_{m \leq n} | f_n(m)-g_n(m)|$ $ = O(1/ \log n)$.
\ignore{ vestige of g^* versus g, from omitting the term  indexed by p=1
Omitting the contribution from $p=1$, we consider
\begin{equation}
\label{g}
g_n(m):= \frac{1}{n} \sum_{p|m} \frac{1}{\log n} \, \frac{np/m}{1+\pi(np/m)}.
\end{equation}
We have
$\sum_{m \leq n} | g_n^*(m)-g_n(m) |$
$= \frac{1}{n \log n} \sum_{m \leq n} \frac{n/m}{1+\pi(n/m)}$
$=O(1/\log n)$.
Hence
$\sum_{m \leq n} |f_n(m)- g_n(m)| = O(1/ \log n)$, and}
Thus
(\ref{dist to N goal}) is equivalent to
$\sum_{m \leq n} | g_n(m)-\frac{1}{n} |$ $ = O(\ll n/ \log n)$.

\def\li{{\rm li}}
  From the well known error bound for the prime number theorem,
$\pi(x)= \li(x)+O(xe^{-c\sqrt{\log x}}),$ together with the
approximation $\li(x) = (x/\log x)[1+1/\log x +O((\log x)^{-2})]$
we have
$$
r(x) := \left| \frac{x}{1+\pi(x)} - (\log x -1) \right| = O(1/\log x).
$$
Thus a good approximation to $g_n(m)$ will be given by
$$
 h_n(m) :=
 \frac{1}{n \, \log n}
{\sum_{p|m}}^* \left( \log \left( \frac{np}{m} \right) -1 \right).
$$
Writing $s(i)$ for the largest squarefree divisor of $i$,
and $\omega(i)$ for the number of distinct prime divisors of $i$,
we have
\begin{equation}
\label{def hfun}
h_n(m) =
 \frac{   \log(s(m))  +
 (1+\omega(m))  \left(  \log \left( n/m \right) -1 \right) }{n \, \log n}.
\end{equation}

With $N \equiv N(n)$ to represent the uniform distribution on
1 to $n$, the total variation distance in (\ref{dist to N goal}) is
approximately
\begin{equation}
\label{dist heuristic}
\frac{1}{2} \sum_{m \leq n} \left| h_n(m)-\frac{1}{n} \right|
= \frac{1}{2} \, \e \left|
\frac{\log(s(N))-\log n}{\log n} + \frac{ 1+\omega(N)}{\log n}
 \left(  \log \left( n/N \right) -1 \right) \right|.
\end{equation}

Before completing the proof of (\ref{dist to N goal}), we
outline the analysis to focus on the source of
the $\ll n$ factor in (\ref{dist to N goal}), and the reason that it cannot
be decreased.
The net contribution from the first term inside the expectation in
(\ref{dist heuristic}) is $O(1/\log n)$, and for
the second term, the two factors are approximately uncorrelated,
with $\e (1+\omega(N))/\log n \sim \ll n / \log n$ for the first factor.
The second
factor has $\e | \log \left( n/N \right) -1  | \rightarrow
\e  |S-1| = 2/e$, where $S$ has the standard exponential distribution,
with  $\p(S>t)=e^{-t}$.  Thus it \emph{should} be
possible to show that
\begin{equation}
\label{log log asymptotics}
d_{TV} ( N(n),  \ J(n) \, P_0(n) \ ) \sim \frac{1}{e}
 \left( \frac{\ll n}{\log n} \right).
\end{equation}

The estimates for the simpler task (\ref{dist to N goal})
in place of (\ref{log log asymptotics})
are as follows.
We need only $K := \sup_{x \geq 1} r(x) < \infty$ to conclude that
$|g_n(m) - h_m(n)|$ $\leq 1/(n \log n) \sum^*_{p|m} r(np/m)$
$\leq K (1+ \omega(m))/(n \log n)$, and hence
$\sum_{1\leq m \leq n} |g_n(m) - h_n (m)| $
$\leq K \e (1+\omega(N(n)))/\log n$
$=O(\ll n/ \log n)$.
In (\ref{dist heuristic}), for the first term
with $\e \log(s(N))-\log n$
we have $\e \log N(n) -\log n \rightarrow 1$
by Stirling's formula, and $\e \log (N(n)/s(N(n)))$
$\leq \sum_{p \leq n}$ $ \log p \, \e (C_p(n)-1)^+$
$\leq \sum_p \log p \sum_{k \geq 2} p^{-k} < \infty$.
For the second term, we apply Cauchy-Schwarz, with
$\e (1+\omega(N(n)))^2 \sim (\ll n)^2$ and
$\e (\log(n/N(n))^2 \rightarrow 1$.
These bounds combine to show that
$\sum_{m \leq n} | h_n(m)-\frac{1}{n}|$
$=O(\ll n / \log n)$.  Together with our previous
bounds comparing $f,g,$ and $h$, we have proved
(\ref{dist to N goal}).
\end{proof}

\ignore{It is  trivial that $d_{TV}(J(n) \, P_0(n), H(n) \, P_0^H (n)) \leq
d_{TV}(J(n),H(n))$. Combining (\ref{dtv to H}) with
(\ref{dist to N goal}) and using the triangle inequality
establishes
\begin{lemma}\label{lemma dtv 2}
\begin{equation}\label{dist to N goal}
d_{TV}( (J(n) \, P_0(n), \ N(n) \ )
=
O \left( \frac{\ll n}{\log n} \right).
\end{equation}
\end{lemma}
}

\subsection{Keeping score: \ 1 insertion and on average
$1+O((\ll n)^2/\log n)$ deletions
suffice for primes}

We need  to control the
expected number of deletions used to convert $M(n)$ to $N(n)$,
which correspond to the number of primes not exceeding $n$
but occurring in the size
biased permutation {\em after} the partial product $J(n)$.
A priori it seems reasonable to believe that one
would have to calculate something along the lines of
(\ref{feller bound}) for permutations,  crossed with
(\ref{decompose}) for primes.  Happily, the idea of
``matching intensity''
can be used to finesse the calculation.

 From (\ref{dist to N goal}) in Lemma \ref{lemma dtv 2}
it follows
(see section \ref{details} iff you want to know the details)
that on a single probability space we can
construct independent $Z_p$, together with $J(n)$ and
$P_0(n)$, and an exactly uniform $N(n)$,
so that always $J(n) | M(n)$, and the good event
\begin{equation}\label{good event N}
  E_n = \{ J(n) P_0(n) = N(n)\, \}
\end{equation}
has
\begin{equation}\label{dtv on E}
\p(E_n^c)=d_{TV}( (J(n) \, P_0(n), \ N(n) \ )
=
O \left( \frac{\ll n}{\log n} \right).
\end{equation}

On the uncoupled event, $E_n^c$, how many prime factors,
that might contribute to $d_W(n)$, can we
expect to see? Recall our
notation from section \ref{indel section}.  Lemma 6 in
\cite{primedw} states that for events $E$
of small, but not too small probability, the expected number of
prime factors is $O((\log \log n) \p(E))$.  The precise statement
is:
 uniformly in $\delta \in [0,1]$,
\begin{equation}\label{quote lemma 6}
  \sup_{E:\, \p(E) \leq \delta} \e(1(E)\ \Omega(N(n))\,)
 =  O \left( \max\left( \ \delta \ll n, {1 \over \log n} \right)
 \right).
\end{equation}
With $E = E_n^c$ and $\delta = \p(E_n^c)$, the combination of
(\ref{dtv on E}) with (\ref{quote lemma 6}) shows that
\begin{equation}\label{N on top1}
  \e \left(1(E_n^c) \Omega\left(  \frac{N}{(N,M)} \right)\right) \leq
 \e \left(1(E_n^c) \Omega\left(  N \right)\right) =
  O \left( \frac{(\ll n)^2}{\log n} \right).
\end{equation}
Now $J(n) | M(n)$ always, and on $E_n$ we have $N(n)=J(n)P_0(n)$
so that $N/(N,M) \ | P_0$, so that
\begin{equation}\label{N on top2}
   \e \left(1(E_n) \Omega\left(  \frac{N}{(N,M)} \right)\right) \leq 1.
\end{equation}
Adding gives
\begin{equation}\label{N on top}
  \e \Omega\left( \frac{N(n)}{(N(n),M(n))} \right) \leq
 1 + O \left( \frac{(\ll n)^2}{\log n} \right).
\end{equation}

Now {\em any coupling} has
$$
  \e  \Omega\left( \frac{N(n)}{(N(n),M(n))} \right)
- \e \Omega\left( \frac{M(n)}{(N(n),M(n))} \right)
= O(1/\log n),
$$
because (see e.g. \cite{tenenbaum} p. 41)
\begin{equation}\label{intensity match}
\e \Omega(N(n)) - \e \Omega(M(n)) = O(1/\log n).
\end{equation}
Combining this with (\ref{N on top})
yields
\begin{equation}\label{M on top}
  \e \Omega\left( \frac{M(n)}{(N(n),M(n))} \right) \leq
   1 + O \left( \frac{(\ll n)^2}{\log n} \right).
\end{equation}
Adding (\ref{N on top}) and (\ref{M on top}) proves
\begin{theorem}
\label{dw upper theorem}
The coupling of section \ref{coupling 2}, based on a size biased
permutation of the natural Poisson multiset of prime powers, and
extended to include a uniform random integer $N(n)$,
has
\begin{equation}\label{dw upper claim}
  \e \sum_{p \leq n} |C_p(n)-Z_p| \ \leq 2 + O\left( \frac{(\log
\log n)^2}{\log n} \right),
\end{equation}
and hence $d_W(n) \leq  2 + O\left( (\log
\log n)^2/\log n \right) $.
\end{theorem}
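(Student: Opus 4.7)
The plan is to exploit the decomposition
\[
\sum_{p\le n}|C_p(n)-Z_p|=\Omega\!\left(\frac{N(n)}{(N(n),M(n))}\right)+\Omega\!\left(\frac{M(n)}{(N(n),M(n))}\right),
\]
bound the ``insertions'' side (the first term) by $1+O((\log\log n)^2/\log n)$ directly from the coupling of Section~\ref{coupling 2} together with the total variation estimate of Lemma~\ref{lemma jp bound}, and then deduce the same bound for the ``deletions'' side from an intensity-matching identity.

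First, I would extend the construction of Section~\ref{coupling 2} so that on one probability space one has the independent geometrics $Z_p$ (and therefore $M(n)=\prod_{p\le n}p^{Z_p}$), the size-biased permutation of the natural Poisson multiset of prime powers producing $J(n)$ and $P_0(n)$, and an \emph{exactly} uniform $N(n)\in[1,n]$. The construction must satisfy $J(n)\mid M(n)$ always, and the good event $E_n=\{J(n)P_0(n)=N(n)\}$ must achieve the total variation bound of Lemma~\ref{lemma jp bound}, so that $\p(E_n^c)=O(\log\log n/\log n)$. Such a construction is produced by a maximal coupling between $J(n)P_0(n)$ and the uniform distribution, modifying the output only on the bad event (so that the marginals of the $Z_p$ and of $J(n)P_0(n)$ are undisturbed).

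Next, I would estimate the insertions. On $E_n$, using $J\mid M$ and $N=J\,P_0$, we get $N/(N,M)\mid P_0$, and since $P_0$ is either $1$ or prime, $\Omega(N/(N,M))\le 1$. On $E_n^c$ I would invoke the quoted bound (\ref{quote lemma 6}) with $E=E_n^c$ and $\delta=\p(E_n^c)=O(\log\log n/\log n)$; the maximum inside the $O(\cdot)$ is $\delta\log\log n$, giving
\[
\e\bigl(1(E_n^c)\,\Omega(N(n))\bigr)=O\!\left(\frac{(\log\log n)^2}{\log n}\right),
\]
and hence $\e\,\Omega(N/(N,M))\le 1+O((\log\log n)^2/\log n)$. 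The deletions side is then controlled for free via the identity $\e\,\Omega(N(n))-\e\,\Omega(M(n))=O(1/\log n)$ (from $\sum_{p\le n}\frac{1}{p-1}-\sum_{p\le n}\frac{1}{p}=O(1)$ and standard estimates), which forces $\e\,\Omega(M/(N,M))$ to lie within $O(1/\log n)$ of $\e\,\Omega(N/(N,M))$. Summing the two contributions yields (\ref{dw upper claim}).

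The main obstacle is the extension to a uniform $N(n)$ in the first step: one must be sure that redistributing mass on $E_n^c$ does not destroy the joint distribution of $(Z_p)_p$ and $J(n)P_0(n)$, so that the divisibility $J\mid M$ and the Poisson structure survive. All subsequent estimates are essentially a careful bookkeeping on $E_n$ versus $E_n^c$; the only analytic ingredient beyond the excerpt is the Lemma~6 of \cite{primedw} that we have quoted, and standard prime-number-theorem-level estimates, so the heart of the argument is really the coupling construction of Section~\ref{coupling 2} combined with Lemma~\ref{lemma jp bound}.
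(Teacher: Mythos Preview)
Your proposal is correct and follows essentially the same route as the paper: extend the coupling to an exactly uniform $N(n)$ via a maximal coupling so that $E_n=\{J(n)P_0(n)=N(n)\}$ has $\p(E_n^c)=O(\log\log n/\log n)$, bound the insertions by $1$ on $E_n$ (from $J\mid M$ and $P_0$ prime or $1$) and by $O((\log\log n)^2/\log n)$ on $E_n^c$ via the quoted Lemma~6 of \cite{primedw}, and then transfer the bound to the deletions side using the intensity match \eqref{intensity match}. The one point to be careful about is exactly the one you flag---that the extension preserves the joint law of $(Z_p)_p$ and $J(n)P_0(n)$---and the paper handles this in Section~\ref{details} by the same maximal-coupling construction you describe.
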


A separate argument (see \cite{primedw}) shows that, for {\em any
coupling},  with probability approaching one,
at least one insertion is necessary to convert $M(n)$ to $N(n)$,
i.e.
$1=\lim \p(  \sum_{p \leq n} (C_p(n)-Z_p)^+  \ \geq 1$.  In particular,
$1 \leq \liminf \e  \sum_{p \leq n} (C_p(n)-Z_p)^+ $.  Using
(\ref{intensity match}),
the average number of deletions
is within $O(1/\log n)$ of the number of insertions.  Hence
$1 \leq \liminf \e  \sum_{p \leq n} (Z_p-C_p(n))^+ $, so that
$2 \leq \liminf  \e \sum_{p \leq n} |C_p(n) - Z_p|$.  This shows that
that $\lim d_W(n)=2$, and that our
coupling, from the point of view of  the insertion-deletion metric in
section \ref{indel section},
is asymptotically optimal.

\subsection{Extending the coupling
to $N(n)$, constructively}\label{details}
(The reader is invited to skip past this section, which
defends the claim at (\ref{good event N}).)
Our coupling as described so far is fairly natural and
explicit.  It starts with independent Poisson $A_q$ for
$q=p^k$.
These determine the prime powers $Q_i$ and the $Z_p$
such that
 $M(n) := \prod_{p\leq n} p ^{Z_p}$ $=\prod_{i: Q_i=p^k, p\leq n} Q_i$,
 where the $Z_p$ for primes $p$ are
independent geometric.
Use independent exponentially distributed $S_1,S_2,\ldots$
to give a size biased permutation of these prime powers $q$;
this determines $J(n)$, a divisor of $M(n)$.  A single uniformly
distributed random variable $U$, independent of
the $Q_i$ and $S_i$ can then be used to determine
$P_0$, via the recipe:  with $K(n) := 1+\pi(n/J(n))$,
let $P_0 = 1$ if $K(n) U \leq 1$, and let
\begin{equation}
\label{use U}
P_0 = p_i \mbox{ if } \ K(n)U \in (i,i+1],
\end{equation}
where $p_i$ denotes the $i^{th}$ smallest prime.
   Equation (\ref{dist to N goal}) gives an
upper bound on the total variation distance
between the distributions of $J(n) P_0(n)$ and of $N(n)$,
and (\ref{3 forms of dtv}) emphasizes that this is just
about the {\em distribution} of $J(n) P_0(n)$.

We have
constructed $J(n) P_0(n)$ so far, and we have  {\em not yet }
constructed  $N(n)$.
It is a standard
coupling argument that  there exist couplings in which
the good event
$E_n = \{ J(n)P_0(n) = N(n) \}$
has the maximum possible probability,
with
$\p(E_n^c) = d_{TV}( J P_0, \ N \ )$.
It is also true, but less obvious,
that there exist such couplings which
extend our already given construction of  $((Q_i,S_i)_{i \geq 1},P_0)$.
We note further that joint distribution of $((Q_i,S_i)_{i \geq 1},P_0,N)$
for such an extension is {\em not uniquely determined}.
The next paragraph offers a constructive choice of
joint distribution.
\ignore{, together with some deterministic machinery
that can be used later when the Poisson process with intensity
$e^{-wy}\ dw \ dy$ is used as the starting point for a
more elaborate coupling}

We present a recipe for constructing $N(n)$ as
a function of the random variables used above, together with some
auxiliary randomization.
  Take two additional uniform random variables $U_{1},U_{2}$
with $U,U_{1},U_{2},Q_1,Q_2,\ldots,S_1,S_2,\ldots$
independent.  We define a deterministic
function
$r_n(u,u_{1},u_{2},q_1,q_2,\ldots,s_1,s_2,\ldots)$
such that for all $\omega \in \Omega$,
$N(n) := r_n(U,U_{1},U_{2},Q_1,Q_2,\ldots,S_1,S_2,\ldots)$
and $\p(N(n) \neq J(n)P_0(n)) = d_{TV}(J P_0,N)$.
The recipe $r_n$ is determined by two requirements.  First,
let $b_n(i) :=  \min(f_n(i),\frac{1}{n})/f_n(i)$, and let
$E_n$ be the event
\begin{equation}\label{def En}
E_n := \{ U_{1} \leq  b_n(J(n)P_0(n)) \}.
\end{equation}
Note that $\p (E_n) = \sum_{1\leq i \leq n} f_n(i) b_n(i) $
$=\sum \min (f_n(i),\frac{1}{n}) = 1 - d_{TV}( J P_0, \ N \ )$.
On the event  $E_n$, we {\em define} $N(n)$ by
$N(n)=J(n)P_0(n)$.
Second, let $G_n(j) := \sum_{i \leq j} (f_n(i)-\frac{1}{n})^- \
 / d_{TV}(J P_0,N)$,
so that by  (\ref{3 forms of dtv}),  $G_n(n)=1$.
On the event $E_n^c$, we {\em define} $N(n)$, to have one
of the values $i$ for which $f_n(i)< 1/n$, by setting:
\begin{equation}\label{construct N}
\mbox{ on } E_n^c, \ \ \ N(n)=j \ \mbox{ if and only if }
G_n(j-1)<U_{2} \leq G_n(j).
\end{equation}
It follows that for $i=1$ to $n$, $\p(N(n)=i) =1/n$, and that $E_n$
satisfies (\ref{good event N}) and \eqref{dtv on E}.

The above construction yields $N(n),$ uniformly distributed
from 1 to $n$, together with random integers $J(n)$ and $P_0(n)$
that evolve smoothly with $n$ growing, such that the event
$E_n = \{N(n)=J(n)P_0(n)\}$ has the maximal possible probability,
namely $1-d_{TV}( (J(n) \, P_0(n), \ N(n) \ )$.  Is it the case
that with probability one, for all sufficiently large $n$, we have
$N(n)=J(n) P_0(n)$?  This is not a trivial question, as
the events $E_n$ are not nested, and the sum of their probabilities
is infinite.

\begin{theorem}
\label{almost sure}
For the above construction,
$$
1=\p(E_n \mbox{ eventually }).
$$
\end{theorem}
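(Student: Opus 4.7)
My plan is to reduce the claim to the almost-sure inequality $\limsup_n n f_n(J(n)P_0(n)) \leq 1$ and then establish this via quantitative control over the coupling. Set $m_n := J(n)P_0(n)$ and $V_n := f_n(m_n)$, so $V_n$ is a function of $(U,(Q_i),(S_i))$ and is independent of $U_1$. Since $b_n(m) = \min(1, 1/(n f_n(m)))$, the event $\{E_n \text{ eventually}\}$ equals $\{U_1 \leq \liminf_n b_n\}$, and uniformity of $U_1$ on $(0,1]$ gives
\[
\p(E_n \text{ eventually}) = \e\bigl[\min(1,\liminf_n b_n)\bigr] = \e\bigl[\min(1, 1/\limsup_n nV_n)\bigr].
\]
This equals $1$ if and only if $\limsup_n nV_n \leq 1$ almost surely, so it suffices to establish this bound.

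By \eqref{ph simple}, $f_n(m) = h_n(m)(1 + O(1/\log n))$ uniformly in $m \leq n$, and the explicit formula \eqref{def hfun} for $h_n$, combined with $\log m + \log(n/m) = \log n$, rewrites as
\[
nh_n(m_n) \;=\; 1 + \frac{\omega(m_n)\log(n/m_n) - \log(m_n/s(m_n)) - (1+\omega(m_n))}{\log n}.
\]
Since $\log(m_n/s(m_n)) \geq 0$ and $(1+\omega(m_n))/\log n \to 0$ a.s., everything reduces to showing $\omega(m_n)\log(n/m_n)/\log n \to 0$ almost surely.

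The decisive step is a uniform-in-$n$ bound on $\log(n/m_n)$ for each fixed realization of $U$. From \eqref{use U} there are two regimes. If $P_0(n) = 1$, then $K(n)U \leq 1$, whence $\pi(n/J(n)) < 1/U$ and so $n/J(n) < p_{\lceil 1/U\rceil}$; hence $\log(n/m_n) = \log(n/J(n))$ is bounded in terms of $U$ alone. If $P_0(n) = p_r$ with $r = \lfloor K(n)U\rfloor \geq 1$, then the prime number theorem (via $p_r \sim r\log r$ and $K(n) \sim (n/J(n))/\log(n/J(n))$) gives $p_r \sim U\cdot(n/J(n))$, so $n/m_n = (n/J(n))/p_r \to 1/U$ as $K(n) \to \infty$. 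Combining the two regimes produces a finite $U$-measurable constant $C(U)$ with $\log(n/m_n) \leq C(U)$ for all $n$ sufficiently large. Meanwhile $\omega(m_n) \leq L(n) + 1$, where $L(n)$ is the number of prime-power factors in $J(n)$, and Theorem \ref{Poisson 1/q thm} identifies the partial products $\log T_k$, up to a perturbation of finite expected total size, with points of the scale invariant Poisson process on $(0,\infty)$; passing to log-scale, $\log\log T_k$ becomes a rate-one Poisson on $\BR$, and the strong law gives $\log\log T_k/k \to 1$ almost surely, equivalently $L(n) \sim \log\log n$ almost surely. Putting it all together,
\[
\frac{\omega(m_n)\log(n/m_n)}{\log n} \;\leq\; \frac{(L(n)+1)\,C(U)}{\log n} \;=\; O\!\left(\frac{\log\log n}{\log n}\right) \;\to\; 0 \quad\text{a.s.,}
\]
which gives $\limsup nV_n \leq 1$ a.s.\ and completes the proof.

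The hard part will be the uniform-in-$n$ bound $\log(n/m_n) \leq C(U)$ in the prime regime $P_0(n) = p_r \geq 2$: the PNT-based estimate $p_r/(U\cdot n/J(n)) \to 1$ must be applied uniformly as $K(n)$ varies---in particular handling the transition through small integer values of $K(n)$---and $C(U)$ blows up as $U \to 0$, though this is harmless because $U$ has a continuous distribution independent of $U_1$. The other main ingredient, the strong law $L(n) \sim \log\log n$ a.s., is conceptually clean but requires verifying that the perturbation from Theorem \ref{Poisson 1/q thm} is absorbed into the almost-sure asymptotics for $L(n)$, rather than merely into an expected-value statement.
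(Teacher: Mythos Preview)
Your reduction to $\limsup_n n f_n(m_n) \leq 1$ a.s.\ and the subsequent analysis via the explicit formula for $h_n$ mirror the paper's proof closely: both arguments come down to showing that $\omega(m_n)\log(n/m_n)/\log n$ cannot stay bounded away from zero, and both bound $\log(n/m_n)$ by a $U$-dependent constant via the recipe \eqref{use U} for $P_0$. The paper packages the same idea as an inclusion $\{U_1<1-\delta,\ U>\delta,\ J(n)\to\infty\}\subset\{E_n\text{ eventually}\}$ rather than via your $\liminf b_n$ formulation, but these are equivalent.

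The one genuine difference is how $\omega(m_n)$ is controlled. The paper uses the purely deterministic estimate $\sup_{1\le i\le n}\omega(i)=o(\log n)$, which is enough: if $(1+\omega(m_n))\log(n/m_n)\gtrsim\log n$ then $\log(n/m_n)\to\infty$, contradicting the $U$-based bound. You instead aim for the sharper probabilistic bound $\omega(m_n)=O(\log\log n)$ a.s.\ via $L(n)\sim\log\log n$. The target is correct, but your justification through Theorem~\ref{Poisson 1/q thm} is misplaced: that theorem couples the \emph{multiset} $\{\log Q_i\}$ to the points of $\X$, not the \emph{size-biased partial sums} $\log T_k$ to points of $\X$. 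To get the latter you would need the scale-invariant spacing lemma together with control over the discrepancy between the two size-biased orderings (this is precisely what Lemma~\ref{task 2} addresses, and it is not free). A much cleaner route to the bound you actually need is $L(n)\le\Omega(J(n))\le\Omega(M(n))=\sum_{p\le n}Z_p$, followed by Kolmogorov's SLLN for this sum of independent geometrics. But the paper's deterministic $o(\log n)$ bound sidesteps all of this and is the simpler choice.
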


\begin{proof}
First note that as events, $\{ J(n) \rightarrow \infty \} = \{ \sum Z_p =
\infty\}$,
and hence $1=\p(J(n) \rightarrow \infty)$.
Recall our use of two fixed uniform random variables,
$U$ in \eqref{use U} and $U_1$  in \eqref{def En}.
For $\delta >0$ we will show that
\begin{equation}
\label{eventual task}
 \{U_1 < 1-\delta,\  U  > \delta , \mbox{ and } J(n) \rightarrow \infty\}
\subset  \{ E_n \mbox{ eventually } \}
\end{equation}
and hence $\p(E_n$ eventually)
$ \geq (1-\delta)^2$
.

Assume we are given an outcome in the event on the left side of
\eqref{eventual task}.  Since the $J(n)$ are partial products,
 $J(n) \rightarrow \infty$
ensures that $\omega(J(n)) \rightarrow \infty$ as $n \rightarrow
\infty$.
To have $E_n$ fail, we must have
$ b_n(J(n)P_0(n)) < 1-\delta$, and hence $n
f_n(J(n)P_0(n))>1/(1-\delta)>1+\delta$.  For $n$ and $\omega(m)$ both large,
arguing as in
the proof of Theorem \ref{lemma jp bound},
$f_n(m)/h_n(m) \rightarrow 1$ so that $f_n(m)>1+\delta$ implies
that $n h_n(m) > 1+\delta/2$. [The hypothesis that $\omega(m)$ is large
is needed to ensure that terms of $g_n(m)$ having $x=np/m$ small,
where we cannot guarantee that
$x/(1+\pi(x))$  is close to
$ (\log x -1)$, make a relatively negligible contribution.]
  Using only the bound
$\log s(m) \leq \log n$ in \eqref{def hfun}, this implies
that for sufficiently large $n$ and $\omega(m)$,
$(1+\omega(m)) \log (n/m) > (\delta/2) \log n$.
Pick $x_0 > 1/(2\delta)$ and large enough that
$x>x_0$ implies $\pi(2 \delta x)
>(\delta) (1+\pi(x))$.
Since $\sup_{1\leq i \leq n} \omega(i) = o(\log n)$,
$(1+\omega(m)) \log (n/m) > (\delta/2) \log n$ implies
that for sufficiently large $n$, $n/m >x_0$.
Thus for sufficiently large $n$, if $E_n$ fails then
$x = n/J(n)>x_0$,
But $U > \delta$ now implies $P_0(n) \geq 2 \delta n/J(n)$, which
would contradict $n/m > x_0$ with $m=J(n)P_0(n)$.
This shows that for the given
outcome, there is an $N_0$, (depending on the outcome
through the values of $J(1),J(2),\ldots$ and $U,U_1$,)
such that for all $n>N_0$, $E_n$ occurs.
\end{proof}

\section{Lecture 4: The distance to the  Poisson-Dirichlet}
\label{distance}

For an integer $N(n)$ distributed uniformly from 1 to $n$,
write
$$
N(n) = P_1(n) P_2(n) \cdots P_{K_n}(n) \ = P_1(n) P_2(n)
\cdots, \ \ P_1 \geq P_2 \geq \cdots,
$$
where $K_n = \Omega(N(n))$ is the number of prime factors of $N$,
and every $P_i(n)$ is either one or prime.
Billingsley  \cite{billing large} in 1972 proved that the
Poisson-Dirichlet process gives the limit in distribution for
the sizes of the large prime factors,
\begin{equation}
\label{Billingsley pd}
\left({\log P_1(n) \over \log n},{ \log P_2(n) \over \log n} ,\ldots \right)
\Rightarrow (V_1,V_2,\ldots),
\end{equation}
where $(V_1,V_2,\ldots)$
has the Poisson-Dirichlet distribution with parameter 1.
The marginal distribution of the largest component is given by
Dickman's \cite{dickman} function $\rho$, in the form
$\p(V_1 \leq 1/u) = \rho(u)$; see \cite{tenenbaum} Chapter III.5.
The characterization of
the limit which is useful for us is
\begin{equation}\label{PD as spacings}
(V_1,V_2,\ldots) \indist \mbox{RANK}(1-X_1,X_1-X_2,X_2-X_3,\ldots)
\end{equation}
where RANK is the function which sorts the coordinates in
nonincreasing order, and $X_1,X_2,\ldots$ are those points of the
scale invariant Poisson process which fall in $(0,1)$, indexed
with $1>X_1>X_2>\cdots >0$.  For other characterizations of
the Poisson-Dirichlet, see for example \cite{T1,pityor97}.
Donnelly and Grimmett \cite{DG}
gave a very nice proof of (\ref{Billingsley pd}) by showing that
a size biased permutation of the left side of (\ref{Billingsley pd})
converges in distribution to $(1-X_1,X_1-X_2,X_2-X_3,\ldots)$, and
then using the continuity of RANK on the simplex.

We asked:
how close are the right and left sides of  (\ref{Billingsley pd})?
One notion of approximation, from Knuth and Trabb Pardo
\cite{knuth}, is that for fixed $i$ and $t \in (0,1)$,
as $n \rightarrow \infty$,
$$
\p\left(\frac{\log P_i(n)}{\log n} \leq t\right) = \p(V_i \leq t) +
O\left( \frac{1}{\log n} \right).
$$
They also give a version with a $O(1/\log n) $ correction term, of the
form:  for fixed $i \geq i$, and fixed $t \in (0,1)$,
$\p(\log P_i(n)/(\log n)  \leq t)$ $ = \p(V_i \leq t)$ $ + r_i(t)/\log n +
O(1/(\log n)^2)$.
That a similar result holds for the \emph{joint} finite
dimensional distributions, together with an expansion in
negative powers of the logarithm, has recently been shown by
Tenenbaum \cite{rate bill}. To state this, for $k \geq 1$
write $F_n(\alpha_1,\ldots,\alpha_k) = \p( \log P_i(n)/(\log n) \geq \alpha_i$
for $i=1$ to $k$), and
$\phi_0(\alpha_1,\ldots,\alpha_k) =  \p(V_1\geq \alpha_1,\ldots,V_k \geq
\alpha_k)$,
 so that Billingsley's result (\ref{Billingsley pd}) is equivalent to:
for all $k \geq 1$ and $\alpha_1,\ldots,\alpha_k \in (0,1)$,
 $F_n(\alpha_1,\ldots,\alpha_k)=$
$\phi_0(\alpha_1,\ldots,\alpha_k) + o(1)$.  Tenenbaum's result is the following:
for every $k \geq 1$  there exist functions
$\phi_1,\phi_2,\ldots$,
continuous except  on finitely many hyperplanes, such that,
\begin{equation}
\label{joint tail}
F_n(\alpha_1,\ldots,\alpha_k)= \sum_{0 \leq h \leq H}
\frac{\phi_h(\alpha_1,\ldots,\alpha_k)}{(\log n)^h}
+ O\left( \frac{1}{(\alpha_k\log n)^{H+1}} \right)
\end{equation}
holds for all fixed $(\alpha_1,\ldots,\alpha_k)$, and
uniformly outside an exceptional set of  $(\alpha_1,\ldots,\alpha_k)$
with measure $O(\log \log n/ \log n)$,
specified in \cite{rate bill}

A very natural and useful choice of metric is the $l_1$ distance,
since this controls the approximation of the set of logarithms
of all divisors, see \cite{divisors,fractale},
by its limit, see \cite{dimacs}, section 22.
Approximations in this metric are not comparable to results such
as \eqref{joint tail}; an analogous situation is that, from
knowledge that the difference between the two sides of
\eqref{eq1.2} is at most $1/n$, Kubilius' fundamental lemma
\emph{does not} follow as a consequence.

A very natural and useful choice of metric is the $l_1$ distance,
since this controls the approximation of the set of logarithms
of all divisors, see \cite{divisors,fractale},
by its limit, see \cite{dimacs}, section 22.
For the metrized approximation question,
it is not necessary to divide by $\log n$.
For a proof or disproof of the following conjecture
about the $l_1$ distance,
 from \cite{AMS1}, I now offer a one hundred dollar prize.
\begin{conjecture} {\bf (\$100 prize offered) }
\label{100 conjecture}
For all $n \geq 1$, it is possible to construct
$N(n)$ uniformly distributed from 1 to $n$,
and the Poisson-Dirichlet process $(V_1,V_2,\ldots)$,
on one probability space, so that
\begin{equation}
\label{pd distance 1}
\e \sum |\log P_i(n) - (\log n) V_i | = O(1).
\end{equation}
\end{conjecture}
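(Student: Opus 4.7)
The plan is to couple $N(n)$ and the Poisson--Dirichlet $(V_i)$ to a single realization of the scale invariant Poisson process $\X$ on $(0,\infty)$. Build $N(n)$ via the Section~\ref{coupling 2} procedure: apply the map $h$ to the points $X_i$ of $\X$ to obtain prime powers with $\log Q_i = h(X_i)$, order by exponential labels $W_i$, form $J(n)$ as the largest partial product below $n$, fill in $P_0(n)$, and extend to an exactly uniform $N(n)$ as in Section~\ref{details}. For the Poisson--Dirichlet side, use scale invariance: $\{X_i/\log n : X_i \leq \log n\}$ is itself a scale invariant Poisson on $(0,1]$, and \eqref{PD as spacings} \emph{defines} $(V_i \log n)$ as the decreasing rearrangement of the spacings of $\X \cap (0,\log n]$, including the top gap $\log n - \max_i X_i$. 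Both sides of the target sum now live on a common $\X$.

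The target sum splits on the event $E_n$ of \eqref{good event N}. On $E_n$, the prime factorization of $N(n)$ is determined by the $Q_i$'s used in the partial product $J(n)$ together with the single extra factor $P_0(n)$. Theorem~\ref{Poisson 1/q thm} shows $\e\sum_i |X_i - \log Q_i| < \infty$, so the $h$-discretization costs $O(1)$; unpacking prime powers $p^k$ with $k \geq 2$ into $k$ copies of $p$ adds an $O(1)$ expected error since $\sum_{p,\, k \geq 2}(k-1)(\log p)/(k p^k) < \infty$. The Donnelly--Grimmett \cite{DG} identification of the size-biased prime factors of $N(n)$ with the successive spacings of the rescaled scale invariant Poisson becomes pathwise in our coupling because the same exponential labels $W_i$ drive both size-biased orderings. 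Since RANK is $1$-Lipschitz in $\ell^1$, the ranked comparison $\sum|\log P_i - V_i \log n|$ is bounded by the size-biased comparison, and the latter is $O(1)$ in expectation on $E_n$.

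The main obstacle is the event $E_n^c$. By \eqref{dtv on E}, $\p(E_n^c) = O(\log \log n/\log n)$, but on $E_n^c$ the correction in Section~\ref{details} completely re-samples $N(n)$ via $U_1, U_2$, destroying the match with $\X$. Since $|\log P_i - V_i \log n| \leq \log n$ and $\e \Omega(N(n)) = O(\log \log n)$, naive bounds yield only an $O(\log \log n)$ contribution---which is precisely the source of the $O(\log \log n)$ appearing in \eqref{summary b}. To reach $O(1)$ I would replace the global re-sampling with a \emph{local} correction: on $E_n^c$, modify $J(n)P_0(n)$ only in its small prime factors (below some threshold $b = n^{o(1)}$), keeping the large prime factors---and hence their match to the top spacings of $\X \cap (0,\log n]$---intact. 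Such a correction would cost $O(\log b)$ per occurrence and $O(\log b \cdot \log \log n/\log n) = o(1)$ on average. Demonstrating that this is possible---i.e., that the conditional distribution of $N(n)$'s small primes given its large primes can be coupled to the analogous conditional law for $J(n)P_0(n)$ given the same large primes---is the principal technical challenge, and would likely require a joint refinement of Kubilius' fundamental lemma beyond what underlies \eqref{dist to N goal}, perhaps via Tenenbaum's expansion \eqref{joint tail}.
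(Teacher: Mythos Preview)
This statement is an \emph{open conjecture} in the paper, not a theorem; the paper explicitly offers a prize for its resolution and proves only the weaker Theorem~\ref{thm for PD} with $O(\log\log n)$. Your proposal does not prove the conjecture either: you correctly isolate $E_n^c$ as the obstruction, observe that naive bounds there give $O(\log\log n)$, and then sketch a ``local correction'' that would modify only small prime factors on $E_n^c$. But you stop at ``the principal technical challenge, \ldots\ would likely require a joint refinement of Kubilius' fundamental lemma.'' That unfinished step \emph{is} the conjecture; nothing in the paper or in your outline closes it.

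There is also a gap in your treatment of the good event $E_n$. You build the primes via the second coupling of Section~\ref{coupling 2}, setting $\log Q_i = h(X_i)$ for the \emph{points} $X_i$ of $\X$, and then realize the Poisson--Dirichlet as the ranked \emph{spacings} of $\X\cap(0,\log n]$. But then the prime factors (approximately the $X_i$) and the $(\log n)V_i$ (the spacings $X_j-X_{j+1}$) are matched only in distribution, by the spacing lemma, not pathwise. Your assertion that ``the same exponential labels $W_i$ drive both size-biased orderings'' is where the argument breaks: in the second coupling the labels for ordering the $Q_i$ have rate $\log Q_i = h(X_i)$, whereas the ordering that makes the partial sums an exact scale invariant Poisson (and hence gives the correct PD spacings) requires labels with rate $X_i$. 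These are two different permutations. The paper handles this with a \emph{third} coupling in Section~\ref{sect PD primes}: there $\log Q_i = h(Y_i)$ with $Y_i$ the spacings of a SIP $\X$, and $J^*(n)=\prod_{i\ge 1}Q_i$ is indexed by the position of $X_i$ in $(0,\log n)$, not by size-biased labels on the $Q_i$. Lemma~\ref{task 1} then gives the pathwise $O(1)$ bound, and Lemma~\ref{task 2} (comparing the two orderings) shows $J^*(n)P_0^*(n)$ is still close to uniform. Filling in your sketch this way recovers exactly Theorem~\ref{thm for PD}, not the conjecture.
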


Note that the liminf of the left side of (\ref{pd distance 1}) is
at least 1, because $\e \sum \log P_i(n)$ $=\e \log N(n)$
$= \log n -1 + o(1)$, from Stirling's
$ n! \sim (n/e)^n \sqrt{2 \pi n}$,
while $\e \sum (\log n) V_i =\log n$.
We finished our workshop lectures by describing a coupling that
achieves $O(\ll n)$ in place of $O(1)$ in (\ref{pd distance 1});
here in the writeup we also present the proof that this coupling
works as claimed.

\begin{theorem}
\label{thm for PD}
The coupling of the Poisson-Dirichlet with a random integer
$N(n)$ uniformly distributed from 1 to $n$, described in section
\ref{sect PD primes}, achieves
\begin{equation}
\label{pd distance 2}
\e \sum |\log P_i(n) - (\log n) V_i | = O(\log \log n).
\end{equation}
\end{theorem}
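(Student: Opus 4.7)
The plan is to use the coupling of section \ref{coupling 2}, extended to produce $N(n)$ exactly uniform via section \ref{details}, together with the natural identification of the Poisson--Dirichlet: by scale invariance, $(\log n)V_i$ is the $i$-th largest spacing of $\X$ restricted to the window $(0,\log n]$, where the rightmost ``spacing'' is the gap $\log n-\max(\X\cap(0,\log n])$ to the right endpoint. With this joint construction in hand, I split the expectation by conditioning on the good event $E_n=\{N(n)=J(n)P_0(n)\}$ and its complement.

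On $E_n^c$, I use the pathwise bound $\sum_i|\log P_i(n)-(\log n)V_i|\le\sum_i\log P_i(n)+\sum_i(\log n)V_i=\log N(n)+\log n\le 2\log n$, combined with $\p(E_n^c)=O(\ll n/\log n)$ from Lemma \ref{lemma dtv 2}; this contributes $O(\ll n)$ to the expectation. This is precisely where the $\ll n$ in the statement arises, via the asymptotic \eqref{log log asymptotics}.

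On the good event $E_n$, I aim to show an $O(1)$ contribution via coupling-based matching. The key is that the coupling is built from the two-dimensional Poisson process with intensity $e^{-wy}\,dw\,dy$, which (as explained in section \ref{sect scale invar}) realizes both the size biased ordering of the points of $\X$ \emph{and} the dual scale invariant Poisson of its spacings simultaneously. Through the $h$-map of Theorem \ref{Poisson 1/q thm}, the SB-ordered $\log Q_i$'s are coupled to spacings of $\X$ with $\e\sum_i|X_i-\log Q_i|<\infty$. The role of $P_0(n)$ mirrors the right-end gap in the PD construction: both fill in to approximately $\log n$. Expanding each prime power $Q_i=p^k$ into $k$ copies of $\log p$ adds an $O(1)$ cost, since $\sum_{p,\,k\ge 2}(k\log p)\cdot(1/(kp^k))<\infty$. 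Finally, ranking is an $l_1$-contraction (by the rearrangement inequality), so the ranked $l_1$ discrepancy between $\{\log P_i(n)\}$ and $\{(\log n)V_i\}$ is bounded by the corresponding SB-ordered matching discrepancy, which is $O(1)$ in expectation.

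The main obstacle will be the careful accounting at the boundary $\log n$. The $\log Q_i$'s contributing to $J(n)$ are those SB-chosen before the partial product passes $n$, whereas the spacings of $\X$ in $(0,\log n]$ are indexed by points of $\X$ lying below $\log n$; these two cutoffs do not coincide exactly, so one must verify that the boundary mismatch, together with the $P_0(n)$ correction from Lemma \ref{lemma jp bound}, costs only $O(1)$ in expected $l_1$. Once this is in place, the bad-event contribution dominates and yields $\e\sum_i|\log P_i(n)-(\log n)V_i|=O(\ll n)$.
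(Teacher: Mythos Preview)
Your overall architecture—split on the good event $E_n$, use the trivial $2\log n$ bound on $E_n^c$, and seek an $O(1)$ bound on $E_n$ via an $l_1$ matching followed by the RANK contraction—is exactly the paper's strategy. But there is a genuine gap in your $O(1)$ step on $E_n$, and it is not merely a ``boundary mismatch.''

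The coupling of section~\ref{coupling 2} takes the $Q_i$ in size-biased order according to labels $\hat W_i=S_i/\log Q_i$, whereas the Poisson--Dirichlet spacings $(\log n)V_i$ arise from the $e^{-wy}$ process with the $Y_i$ in size-biased order according to labels $W_i=S_i/Y_i$. Since $\log Q_i=h(Y_i)\neq Y_i$, these are two \emph{different} permutations of the same multiset. Your matching pairs each $\log Q_i=h(Y_i)$ with its own $Y_i$; that pairing gives $\e\sum|h(Y_i)-Y_i|<\infty$ (this is the spacings analogue of Theorem~\ref{Poisson 1/q thm}, via the scale invariant spacing lemma—note your citation literally refers to \emph{points} $X_i$, not spacings). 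But for this pairing to be a legitimate bijection between the factors of $J(n)P_0(n)$ and the PD spacings, you need the set $\{i:\ Q_i\ \text{is used in}\ J(n)\}$ (determined by the $\hat W$-order cutoff) to coincide with the set $\{i:\ i\ge 1\}$ in the section~\ref{sect PD primes} indexing (determined by the $W$-order cutoff at $\log n$). When the two orderings disagree beyond the point where the partial sum first exceeds $\log n$, these index sets differ by elements of potentially large size, and the mismatch is not a boundary effect and is not $O(1)$ pathwise.

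The paper closes exactly this gap: it defines a \emph{third} coupling in section~\ref{sect PD primes}, with $J^*(n)=\prod_{i\ge 1}Q_i$ built directly from the $W$-ordering, so that Lemma~\ref{task 1} gives the $O(1)$ matching to the PD for $J^*(n)P_0^*(n)$ essentially by your argument (splitting prime powers, handling $P_0^*$ versus the right-end gap, using $\e\sum|h(Y_i)-Y_i|<\infty$). The real work is then Lemma~\ref{task 2}, whose proof shows $\p(J^*(n)\neq J(n))=O(\ll n/\log n)$ by bounding the probability that the $\hat W$- and $W$-orderings disagree on any pair whose partial sum already exceeds $(\log n)/2$. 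That comparison of the two size-biased orderings is the missing ingredient in your proposal; without it, the ``boundary mismatch'' you flag cannot be controlled at $O(1)$, only at $O(\ll n)$ after passing through the bad-event bound.
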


Historical notes: \emph{all} logarithmic combinatorial structures
have a Poisson-Dirichlet limit for the fractions of system
size in the largest, second largest, third largest, $\ldots,$
components.
This was shown in 1977 for permutations,  by Kingman \cite{kingman77},
and independently by
Vershik and Schmidt \cite{vershik1,vershik2}.  It was shown for random
mappings --- where the Poisson-Dirichlet limit has parameter $1/2$
--- by
Aldous \cite{aldous mapping} in 1983.  It was shown
for a wide class of
combinatorial structures
by Hansen  \cite{hansen} in 1994,
and with local limit bounds, for a very
general scheme, in  \cite{abtlocal}.

The analog of Conjecture \ref{500 conjecture} is {\em false} for
permutations, for the simple reason that $n V_i$ has a distribution with
a continuous density, while the size $L_i(n)$ of
the $i^{th}$ largest cycle has integer support, so that under {\em
any conceivable} coupling, for every $i$,
$\liminf_n \e |L_i(n) - n V_i| \geq 1/4$.
For $i >(1+\epsilon) \log n$, one can match
$n V_i$ with $L_i(n)=0$, and the net result is that
$\liminf_n (1/\log n)
\e \sum_i |L_i(n) - n V_i| \geq 1/4$.  It is shown in \cite{four couplings}
that the coupling for
permutations which is analogous to our coupling in section
\ref{sect PD primes} achieves this lower bound, with
\begin{equation}
\label{perm PD distance}
\e \sum |\log L_i(n) - n V_i | \sim \frac{1}{4} \log n.
\end{equation}

\subsection{Growing a random integer from the Poisson-Dirichlet}
\label{sect PD primes}

This section gives a third coupling for growing a random integer
$J(n) P_0(n)$ which is close in distribution to
the uniform random integer $N(n)$.  Our first coupling, in Lecture 1,
determined $J(n)$ from a size biased permutation of the multiset with $Z_p$
spacings of size $\log p$, with $Z_p$ geometrically distributed.
Our second coupling, in Lecture 3, used instead a size biased
permutation of the Poisson multiset with $A_{p^k}$ spacings  of
size $\log p^k$, with $A_{p^k} \sim$ Poisson($1/(kp^k))$.
This Poisson multiset can be constructed by applying the
deterministic
function $h$ at (\ref{modified h})  to the points
of the scale invariant Poisson process, so the second coupling
may be viewed as constructing $J(n)$ from a deterministic function
of the scale invariant Poisson, together with the auxiliary
randomization of a size biased permutation.

For our third coupling, this Poisson multiset,
{\em and its permutation},
will be given by a deterministic function applied to
the
{\em spacings}
 of the Poisson-Dirichlet process.  These spacings are the points of
the scale invariant Poisson process, in order of a size biased
permutation.
The sizes  $\log p^k$ are slightly different
 from the sizes of the Poisson-Dirichlet spacings, so
the ordering of spacings in our third
coupling is a perturbation of that in our second coupling.

Start with the Poisson process on $(0,\infty)^2$, with
intensity $e^{-wy} \ dw \ dy$, from section
\ref{sect exp(-wy)}, with points $\{ (W_i,Y_i), i \in \BZ \}$,
but reverse the direction of the indexing, so that
$W_i < W_{i+1}$ for all $i \in \BZ$.  Defining $X_i :=
\sum_{j \geq i} Y_j$ gives the scale invariant Poisson
process $\X = \{ X_i: i \in \BZ\}$, indexed in decreasing order,
with $X_{i+1}<X_i$.  Now {\em shift} the indexing of
 $\{ (W_i,Y_i), i \in \BZ \}$, so that $X_1$
appears as the first point to the left of $\log n$.  To summarize,
we have the points of the scale invariant Poisson process $\X$,
indexed
so that
\begin{equation}\label{first index}
0< \cdots < X_3 < X_2 < X_1 < \log n \leq X_0 < X_{-1} < \cdots \ .
\end{equation}

The Poisson-Dirichlet from (\ref{PD as spacings}),
scaled up by $\log n$, is realized as
\begin{equation}\label{log n times PD}
  ((\log n)V_1, (\log n) V_2,\ldots) = \mbox{RANK}(\log n -
  X_1,X_1-X_2,\ldots).
\end{equation}

The points $Y_i = X_i-X_{i+1}$
for $i \in \BZ$ are the points of the scale invariant Poisson
process, and using $h$ from (\ref{modified h}), we
construct
random $Q_i$ by
\begin{equation}\label{Q from X}
 \ \ \ \ \log Q_i = h(Y_i), \ \ \ \mbox{with } Y_i=X_i-X_{i+1}, \ \ i \in \BZ.
\end{equation}

\begin{figure}[h]
\begin{picture}(400,100)(-20,0)
\thinlines \put(-20,0){\framebox(350,100)}
\put(-5,60){\shortstack{An outcome where $e^{-\gamma} \geq Y_i$
for $i=2,6,7,\ldots,$ so that \\ $Q_1,Q_3,Q_4,Q_5$ are prime
powers; but $1=Q_2=Q_6=Q_7=\cdots .$
 \\ $\log Q_i=h(Y_i)$ is close to $Y_i$, and $\log P_0(n)$ is
close to $\log n - X_1$.}}
\thicklines
\put(0,25){\line(1,0){320}}
\thinlines
\put(0,35){\vector(0,-1){10}}
\put(18,35){\vector(0,-1){10}}
\put(14,25){\circle*{3}}
\put(58,25){\circle*{3}}
\put(80,25){\circle*{3}}
\put(156.48,25){\circle*{3}}
\put(167.27,25){\circle*{3}}
\put(225.65,25){\circle*{3}}
\put(302.6,35){\vector(0,-1){10}}
\put(9,15){\makebox(0,0){\tiny$\cdots$}}
\put(310,15){\makebox(0,0){\tiny$\cdots$}}
\put(0,45){\makebox(0,0){0}}
\put(20,45){\makebox(0,0){$e^{-\gamma}$}}
\put(19,15){\makebox(0,0){$X_6$}}
\put(58,15){\makebox(0,0){$X_5$}}
\put(80,15){\makebox(0,0){$X_4$}}
\put(156,15){\makebox(0,0){$X_3$}}
\put(170,15){\makebox(0,0){$X_2$}}
\put(225.65,15){\makebox(0,0){$X_1$}}

\put(37,35){\makebox(0,0){$Y_5$}}
\put(69,35){\makebox(0,0){$Y_4$}}
\put(118,35){\makebox(0,0){$Y_3$}}
\put(162,35){\makebox(0,0){$Y_2$}}
\put(197,35){\makebox(0,0){$Y_1$}}
\put(302.6,45){\makebox(0,0){$\log n$}}
\end{picture}
\end{figure}

[The following information is motivational, and not part of
our proof.
Like the $Q_1,Q_2,\ldots$ in section \ref{coupling 2}, for every $q=p^k$
we have that $A_q := \sum_{i \in \BZ} 1(Q_i=k)$ is
Poisson($1/(kq)$),
with the $A_q$ mutually independent, but there are
several differences in the indexing scheme:  here the
$Q_i$ are indexed from right to left, there may be
instances of $Q_i=1$  in between occurrence of proper
prime powers, and most significantly, the sizes
used for the size biased permutation are the $X_i-X_{i+1}$ and not
the $\log Q_i$.  We will need to show that since
$h$ is close to the identity
function, the second and third couplings are close, in that
with high probability, they produce the same $J(n)$.]

Define $J^*(n)$ by
\begin{equation}\label{J 3}
J^*(n) = \prod_{i \geq 1} Q_i.
\end{equation}
It is conceivable that $J^*(n)>n$ if $X_1$ is close enough to $\log n$
and $h$ gets applied in places with $h(y)>y$; in this case we will
prescribe $P_0^*(n)=1$.   When $J^*(n)\leq n$, take $P_0^*(n)$
to be one or prime,
such that $J^*(n) P_0^*(n) \leq n$, picking uniformly over the
$1+\pi(n/J^*(n))$ possibilities.

We have two tasks.  The first, carried out in
Lemma \ref{task 1}, is to show
that the prime factors $P_1^*(n),P_2^*(n),\ldots$
of this random
integer $J^*(n) P_0^*(n)$,
listed in nonincreasing order, give a vector of logarithms
close to the Poisson-Dirichlet.
The second task,
carried out in Lemma \ref{task 2}, is to show,
like Lemma \ref{lemma dtv 2}, that the random integer
we have constructed is close to uniform.  From
these two lemma, Theorem \ref{thm for PD} follows easily.
We will state the lemmas, then give the proof of Theorem
\ref{thm for PD} to finish this section.  The next section
provides
the proofs of the two lemmas.

\begin{lemma}
\label{task 1}
\begin{equation}\label{dist to PD 1}
\e \sum_{i \geq 1} |\log P_i^*(n) - (\log n) V_i | = O(1).
\end{equation}
\end{lemma}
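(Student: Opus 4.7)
The plan is to bound the sorted $\ell_1$ distance between the nonincreasing sequences $(\log P_i^*(n))_{i \geq 1}$ and $((\log n) V_i)_{i \geq 1}$ by exhibiting an explicit (not necessarily monotone) pairing of the underlying multisets and estimating its expected cost. By the classical fact that sorting in the same order minimizes the sum of $|\cdot|$ of paired differences, the sorted $\ell_1$ distance is at most the $\ell_1$ cost of any bijective pairing between the two multisets, with unmatched elements on either side paired to a padding zero.

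The natural pairing, read directly off the construction of Section \ref{sect PD primes}, is as follows: pair $\log P_0^*(n)$ with $\log n - X_1$; for each $i \geq 1$ with $Q_i = p^k$ ($k \geq 1$), pair one copy of $\log p$ (drawn from the prime factorization of $J^*(n)$) with $Y_i$; pair the remaining $k-1$ copies of $\log p$ (when $k \geq 2$) and each $Y_i$ with $Q_i = 1$ (equivalently $Y_i \leq e^{-\gamma}$) with other elements of the opposite side whose value is similarly small, bounding $|a-b|\leq a+b$ for any residual mismatches. Using $|\log p - Y_i| \leq (k-1)\log p + |h(Y_i) - Y_i|$, the total cost of this pairing is bounded above by
\[
|\log P_0^*(n) - (\log n - X_1)| \;+\; \sum_{i \geq 1} |h(Y_i) - Y_i| \;+\; 2 \!\!\sum_{i:\, Q_i = p^k,\, k \geq 2}\!\! (k-1)\log p \;+\!\! \sum_{i:\, Y_i \leq e^{-\gamma}}\!\! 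Y_i.
\]
The second, third, and fourth terms have $O(1)$ expectation: by the scale invariant spacing lemma of Section \ref{sect scale invar}, $\{Y_i : i \in \BZ\}$ is itself distributed as a scale invariant Poisson process, so by Campbell's formula (as in the proof of Theorem \ref{Poisson 1/q thm}), $\e \sum_{i \in \BZ} |h(Y_i) - Y_i| = \int_0^\infty |f(x) - \log x|\,dx < \infty$, and similarly $\e \sum_{i: Y_i \leq e^{-\gamma}} Y_i \leq \int_0^{e^{-\gamma}} y \cdot (1/y)\,dy = e^{-\gamma}$. The splitting term has expectation $2 \sum_{p,\, k \geq 2}(k-1)(\log p)/(kp^k) = O(1)$, using $\e A_{p^k} = 1/(kp^k)$.

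The main obstacle is the first term, which I would attack by the triangle inequality
\[
|\log P_0^*(n) - (\log n - X_1)| \leq |\log P_0^*(n) - (\log n - \log J^*(n))^+| + |(\log n - \log J^*(n))^+ - (\log n - X_1)|.
\]
The second summand, in either case $J^*(n) \leq n$ or $J^*(n) > n$, is bounded by $|\log J^*(n) - X_1| = |\sum_{i \geq 1}(h(Y_i) - Y_i)| \leq \sum |h(Y_i) - Y_i|$, which is again $O(1)$ in expectation. The first summand is the ``prime gap'' $\log(n/J^*(n)) - \log P_0^*(n) \geq 0$ when $J^*(n) \leq n$ and is $0$ otherwise. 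Conditioning on $x := n/J^*(n) \geq 1$ and using that $P_0^*(n)$ is uniform on $\{1\} \cup \{p \text{ prime}: p \leq x\}$, one computes
\[
\e[\log x - \log P_0^*(n) \mid x] = \log x - \frac{\theta(x)}{1 + \pi(x)},
\]
which by the refined prime number theorem $\theta(x)/(1+\pi(x)) = \log x - 1 + o(1)$ is uniformly $O(1)$ for $x \geq 1$ (bounded by $\log x$ when $x$ is bounded and tending to $1$ as $x \to \infty$). Summing the four bounds gives the claim. The genuinely new input is the PNT-flavored gap estimate; the rest is bookkeeping of known integrals and the rearrangement inequality.
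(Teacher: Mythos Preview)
Your proof is correct and follows essentially the same route as the paper: both use the $\ell_1$ contraction of RANK (your ``rearrangement inequality''), then bound three contributions --- $|h(Y_i)-Y_i|$ via the scale-invariant spacing lemma and the integral $b_0$, the prime-power splitting via $\sum_{k\ge2}(k-1)(\log p)/(kp^k)<\infty$, and $|\log P_0^*(n)-(\log n-X_1)|$ via the PNT-based estimate $\sup_c \e_c(c-\log P_0)<\infty$ plus another $b_0$ for $|\log J^*(n)-X_1|$. Your fourth term $\sum_{Y_i\le e^{-\gamma}}Y_i$ is actually already contained in your second term (since $h(Y_i)=0$ there), so it is redundant but harmless as an upper bound.
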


\begin{lemma}
\label{task 2}
\begin{equation}\label{dist to N 3}
d_{TV}( (J^*(n) \, P_0^*(n), \ N(n) \ )
=
O \left( \frac{\ll n}{\log n} \right).
\end{equation}
\end{lemma}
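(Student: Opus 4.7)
The proof should mirror that of Lemma \ref{lemma jp bound}, adapted to the third coupling. The only substantive difference from the second coupling is that the labels governing the size-biased permutation are exponentials with rate $Y_i$ (the true Poisson spacing) rather than rate $\log Q_i = h(Y_i)$ (the discretized size); since $h$ is close to the identity map, this is a mild perturbation.

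\textbf{Step 1: density of $J^*(n)$.} I would first establish the analog of Lemma \ref{lemma j bound}, namely
\begin{equation*}
\max_{1 \leq i \leq n}\left|\frac{\p(J^*(n)=i)}{1/(i\log n)}-1\right| = O\!\left(\frac{1}{\log n}\right), \qquad \p(J^*(n)>n) = O\!\left(\frac{1}{\log n}\right).
\end{equation*}
The structural reason is that the rightmost point $X_1$ of the scale invariant Poisson below $\log n$ has density \emph{exactly} $1/\log n$ on $(0,\log n)$: indeed, $\p(X_1 \leq t) = 1 - \exp\bigl(-\!\int_t^{\log n}(1/x)\,dx\bigr) = t/\log n$. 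Moreover $\log J^*(n) = X_1 + E$ with $E := \sum_{i\geq 1}(h(Y_i)-Y_i)$ satisfying $\e|E|=O(1)$ by Theorem \ref{Poisson 1/q thm}, so $\log J^*(n)$ is close to uniform on $(0,\log n)$. For the quantitative estimate, I would mimic the derivation of (\ref{decompose})--(\ref{simpler 1}): introduce a label cutoff $T(n)=c/\log n$, use the compound Poisson identity (\ref{compound Poisson general}) and the product formula (\ref{dist It}) giving $\p(I_t = i) = i^{-1-t}/\zeta(1+t)$, then integrate against the joint density of the ``exit spacing'' $Y$ and its label $W$, obtained from the 2D intensity $e^{-wy}\,dw\,dy$. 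The expansion $1/\zeta(1+\delta) = \delta + O(\delta^2)$ controls the leading term, exactly as in the proof of Lemma \ref{lemma j bound}.

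\textbf{Step 2: total variation.} Once the density estimate is in hand, the proof of Lemma \ref{lemma jp bound} runs word for word with $J^*, P_0^*$ in place of $J, P_0$: write
\[
\p(J^*(n)P_0^*(n)=m) = {\sum_{p|m}}^* \p(J^*(n)=m/p)\,\big/(1+\pi(np/m)),
\]
approximate by $h_n(m)$ from (\ref{def hfun}), and bound the total variation by $O(\ll n/\log n)$ using Stirling (to control $\e|\log(n/N(n))-1|$) and Hardy--Ramanujan ($\e(1+\omega(N(n))) \sim \ll n$). The overflow event $\{J^*(n)>n\}$ contributes at most $\p(J^*(n)>n) = O(1/\log n)$ to the total variation distance, which is absorbed into the bound.

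\textbf{Main obstacle.} The delicate part is Step 1. Although $\e|E|=O(1)$, a pointwise bound $|E|=O(1/\log n)$ is false, so one must show that integrating over $X_1$ smooths the perturbation and contributes only a multiplicative $1+O(1/\log n)$ correction to the density rather than a pointwise error of order $\e|E|/\log n = O(1/\log n)$ that happens to match. Equivalently, one must verify that the ``continuous label'' structure of the third coupling --- rates $Y_i$ rather than $\log Q_i$ --- preserves the clean factorization $\p(I_t = i) = i^{-1-t}/\zeta(1+t)$ up to a relative error $O(1/\log n)$, with the boundary regimes $t \to 0$ and $t \to \log n$ requiring particular care.
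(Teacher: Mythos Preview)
Your approach is genuinely different from the paper's, and the obstacle you flag in your final paragraph is exactly where your argument stalls.

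The paper does \emph{not} redo the density calculation for $J^*(n)$. Instead it couples $J^*(n)$ to the $J(n)$ of the second coupling (Section~\ref{coupling 2}) on the \emph{same} probability space and shows
\[
\p\bigl(J^*(n)\neq J(n)\bigr)=O\!\left(\frac{\log\log n}{\log n}\right),
\]
after which Lemma~\ref{lemma dtv 2} and the triangle inequality finish the job. Concretely: re-index the $Y_i$ by size, so the proper prime powers are $Q_1,Q_2,\ldots$; then $J(n)$ is the largest partial product $\le n$ under the ordering by $\hat W_i:=W_iY_i/h(Y_i)$, while $J^*(n)$ is a partial product under the ordering by $W_i$, with stopping rule $\sum Y_i<\log n$ rather than $\sum h(Y_i)\le\log n$. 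The discrepancy is controlled by (i) a ``good'' event $G$ forbidding $D=\sum|h(Y_i)-Y_i|\ge\log\log n$ and forbidding points of $\X$ within $\log\log n$ of $\log n$, with $\p(G^c)=O(\log\log n/\log n)$; and (ii) a ``witness'' count $N_B$ for index pairs $(i,j)$ on which the $W$- and $\hat W$-orderings disagree beyond the point where partial sums exceed $(\log n)/2$, with $\e N_B=O(1/\log n)$ via a direct integral over the intensity $e^{-wy}\,dw\,dy$.

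By contrast, your Step~1 tries to invoke the product formula $\p(I_t=i)=i^{-1-t}/\zeta(1+t)$ directly. That formula, however, rests on the fact that in the \emph{second} coupling the label on a copy of $q=p^k$ is exponential with rate exactly $\log q$, which makes $A_q(t)$ Poisson with mean $q^{-t}/(kq)$ and hence $Z_p(t)$ geometric. In the third coupling the label has rate $Y_i$, not $h(Y_i)=\log q$; the count of points with $h(Y_i)=\log q$ and $W_i>t$ is Poisson with mean $\int_{h^{-1}(\{\log q\})} e^{-ty}y^{-1}\,dy$, which is \emph{not} $q^{-t}/(kq)$, so the $Z_p(t)$ are no longer geometric and the $\zeta$-factorization breaks down. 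There is also a second mismatch you did not isolate: the stopping rule for $J^*(n)$ is $X_1=\sum_{i\ge1}Y_i<\log n$, not $\sum_{i\ge1}h(Y_i)\le\log n$, so even with a valid label-cutoff density you would still need to transfer across this discrepancy. You correctly identify that one would need to show the factorization survives ``up to relative error $O(1/\log n)$'', but you give no mechanism for doing so; establishing this directly appears to require work comparable to, or harder than, the paper's coupling comparison --- and the paper's route has the advantage of reusing Lemmas~\ref{lemma j bound} and~\ref{lemma jp bound} wholesale rather than perturbing their proofs.

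Your Step~2 is fine \emph{conditional on} Step~1, and your treatment of the overflow event $\{J^*(n)>n\}$ is correct in spirit.
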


\noindent {\bf Proof of Theorem \ref{thm for PD}}
As in section \ref{details}, the coupling of $J^*(n)P_0(n)$ with
the Poisson-Dirichlet process can be extended to include $N(n)$,
distributed uniformly on 1 to $n,$
in such a way that
$N(n)=J^*(n)P_0^*(n)$ except on a
``bad'' event of probability equal to the total variation distance
$d_{TV}( (J^*(n) \, P_0^*(n), \ N(n) \ )$.
On the bad event we have no control over the $l_1$
distance, apart from the trivial bound: it is at most $ 2 \log n$.  Multiplying
by an upper bound on the probability of the bad event, from
Lemma \ref{task 2},
gives us the main contribution to the error, of size
$O(\log \log n)$.  On the complement of the bad event, the
contribution is $O(1)$ by Lemma \ref{task 1}.  Adding these errors
gives (\ref{pd distance 2}). \qed

\subsection{Proofs of the lemmas for Theorem \ref{thm for PD}}
~\\~\\
\noindent {\bf Proof of Lemma \ref{task 1}}
There are three contributions to the $l_1$ distance:
first $Y_i$ versus $\log Q_i$,
second a  contribution from $Q_i$
which are prime powers but not prime,
and third $\log P_0^*(n)$ versus $(\log n - X_1)$.
Observe that for any vectors ${\bf x}=(x_1,x_2,\ldots)$
and ${\bf y}=(y_1,y_2,\ldots)$ in $[0,\infty)^\BN \cap l_1$,
the $l_1$ distance is not increased if the coordinates
of both vectors are sorted:  $|| $RANK(${\bf x}) -
$RANK(${\bf y})||_1
\leq || {\bf x} - {\bf y}||_1$.

Consider the random variable
\begin{equation}\label{def D}
D =  \sum_{i \in \BZ} |h(Y_i) - Y_i|.
\end{equation}
It has
\begin{equation}\label{E D}
  \e D
=\e  \sum_{i \in \BZ} |h(Y_i) - Y_i|
 = b_0 < \infty,
\end{equation}
using (\ref{area bound q}) and the scale invariant spacing lemma.

The first contribution to our expected $l_1$ distance is handled by:  for all $n$,
\begin{equation}\label{h is easy}
  \e \sum_{i \geq 1} |\log Q_i - Y_i|
=\e  \sum_{i \geq 1} |h(Y_i) - Y_i|
\leq \e D = b_0 < \infty.
\end{equation}

To handle the second contribution,
we ``split up'' any prime
powers $p^k$ with $k>1$ which may occur among the $Q_i$, defining
$Q_1^*,Q_2^*,\ldots$ to be one or  prime,
so that $J(n)=\prod_{i \geq 1} Q_i$ $=\prod_{i \geq 1} Q_i^*$.
To do this, start with  $Q_j^*=1$ whenever $Q_j=1$; some of these will be
changed. Always take $Q_i^*= p$
when $Q_i=p^k$.  For any $Q_i=p^k$ with $k>1$, take
$k-1$ indices $j$ for which $Q_j^*=1$ and change these to
$Q_j^*=p$.  With ${\bf x} = (\log Q_1,\log Q_2,\ldots)$
and ${\bf y}= (\log Q_1^*,\log Q_2^*,\ldots)$ we have
$ || {\bf x} - {\bf y}||$
$= \sum_{p,k,i} 2(k-1)(\log p)1(Q_i=p^k)$
$\leq \sum_{q=p^k,k>1}2 (\log q) A_q$.
Note that $\e  \sum_{q=p^k,k>1} (\log q) A_q$
$= \sum_{q=p^k,k>1} (\log q) /(kq) < \infty$.

To handle the third contribution:
using $c = (\log n - \sum h(Y_i))^+$, our recipe for
$P_0^*(n)$ is to choose uniformly over the $1+\pi(e^c)$ numbers
which are one, or a prime at most $e^c$.  Writing $\p_c$ and
$\e_c$ for such
a choice of $P_0$, we have $\sup_{c \geq 0} \e_c(c-\log P_0) < \infty$,
as a simple consequence of
the prime number theorem, that
$\pi(e^x) \sim e^x/x$ as $x \rightarrow \infty$.
Using $X_1=\sum_{i \geq 1} Y_i$, we have
$|(\log n - X_1) - (\log n - \sum_{i \geq 1} h(Y_i)|$
$= | \sum_{i \geq 1}(Y_i~-~h(Y_i) |$,
with expectation bounded by $b_0$, using (\ref{E D}).
Combining yields $\sup_n \e | \log P_0^*(n)- (\log n -X_1)| <
\infty$.

Combining these three contributions, and using the $l_1$
contraction property of the  function RANK,
proves (\ref{dist to PD 1}).
\hfill \qed

\noindent {\bf Proof of Lemma \ref{task 2}}

In contrast to (\ref{first index}) and (\ref{Q from X}), we
now re-index the $\{ Y_i: i \in \BZ\}$
so that
$$
\cdots < Y_{-2} < Y_{-1}<Y_0 \leq e^{-\gamma} <Y_1<Y_2<\cdots \ .
$$
The choice of location of $e^{-\gamma}$
has the effect that $ Q_1,Q_2,\ldots $ are
the primes and prime powers, while $1=Q_i$ for $i \leq 0$.
The indexing of the $Y_i$ in the order of their own values has the
effect that, with
$$
S_i := W_i/Y_i,
$$
the $S_i$ for $i \in \BZ$
are, conditional on the values of
$Y_i, i \in \BZ$,  mutually independent, standard exponentials ---
and this would not
have been true under the previous indexing, where $W_i < W_{i+1}$.

We construct the second coupling, of section \ref{coupling 2},
 from this multiset $\{ Q_1,Q_2,\ldots \}$.
For the size biased
permutation of the $\log Q_i$, we use the exponentially
distributed labels
\begin{equation}\label{s order}
\hat{W_i} := S_i/\log Q_i = W_i \ \frac{Y_i}{h(Y_i)},
\end{equation}
so that $J(n)$ is the largest partial product of
the $Q_i$ not exceeding $n$, with the $Q_i$ taken in order of
decreasing labels  $\hat{W_i}$.
In contrast,  $J^*(n)$ is  a partial product of $Q_1,Q_2,\ldots$
taken in order of decreasing $W_i$.
Because $h$ is close to the identity, eventually the permutation
induced by the $W_i$ agrees with the permutation induced by the
$\hat{W_i}$.  We will show that
\begin{equation}\label{2 vs 3}
  \p ( J^*(n) \neq J(n)) \ = O\left( \frac{\log\log n}{\log n} \right),
\end{equation}
and thus $d_{TV}(J^*(n)P_0^*(n), J(n)P_0(n)) = O(\log \log n /\log n)$.
Combined with Lemma \ref{lemma dtv 2}, this yields
 $d_{TV}(J^*(n)P_0^*(n), N(n)) = O(\log \log /\log n)$.
As a remark, we believe that the quantity in (\ref{2 vs 3}) is
actually $O(1/\log n)$, but since this improved bound would not
improve the overall result, we settle for the looser bound.

First, we show that the effect in (\ref{J 3}) of stopping
at the largest $X_i < \log n$, rather than stopping with the
largest partial product not exceeding $n$, is negligible.
Consider the ``good'' event
\begin{equation}\label{good event}
  G = \{  D \geq \log \log n \mbox{ or }
\X \cap (\log n - \log \log n,
\log n + \log \log n) \neq \emptyset \
   \},
\end{equation}
with $D$ given by (\ref{def D}).
We will show that
\begin{equation}\label{upper for good}
  \p (G^c) = O\left( \frac{\log \log n}{\log n} \right),
\end{equation}

To show this, first observe that $\p (\X \cap (\log n - \log \log n,
\log n + \log \log n) \neq \emptyset)$ $=O(\log \log n / \log n)$,
since the intensity of $\X$ is $1/x \ dx$.
Second, observe that
$\p(D \geq \log \log n)$ $=O(1 / \log n)$,
which follows from showing $\e e^{\beta D} < \infty$
with some $\beta >1$.  In fact, $\e e^{\beta D} < \infty$
for all $\beta$; with $g$ as defined following
(\ref{1/q f}), and using (\ref{exponential map}),
 we have $D=\sum_{i \in \BZ} |g(L_i)-\exp( L_i)|$
so that
$$
\e e^{\beta D} = \exp\left(
\int_{-\infty}^\infty (e^{\beta (g(l) -e^l)}-1) dl
 \right).
$$
The contribution to the integral from the neighborhood of
$-\infty$ is finite, using $g(l)=0$ there, and
contribution to the integral from the neighborhood of
$\infty$ is finite, using $g(l)-e^l=O(e^l \exp(-c e^{l/2}))$
as $l \rightarrow \infty$, which follows from (\ref{1/q f}).

Next, we consider a bad event on which the two permutations,
one induced by the sizes $W_i$, and the other induced by the sizes
$\hat{W_i}:= W_i \ {Y_i}/{h(Y_i)}$ as in (\ref{s order}), might disagree
in a nontrivial way,  i.e. giving the opposite ordering to
$Q_i,Q_j>1$,
out beyond the place where the partial sum of the $Y_i$ exceeds
$(\log n)/2$.
Let
\begin{equation}\label{bad}
  B = \{
\exists i\neq j, \ Y_i,Y_j>e^{-\gamma}, W_i<W_j,  \hat{W_i} \geq
 \hat{W_j}, \ \sum Y_k 1(W_k \geq W_i) > (\log n)/2
  \}.
\end{equation}
For $n$ so large that $\log \log n < (\log n)/2$, we have
$$
\{ J^*(n) \neq J(n) \}\subset G^c \cup B,
$$
so that it only remains to show $\p(B) =O(\log \log n / \log n)$.

Let
$$
T_w = \sum_{i \in \BZ} Y_i 1(W_i > w).
$$
The distribution of $T_w$ is exponential, with $\e T_w=1/w$
--- see for example \cite{kingman}, under the ``Moran process'', or
\cite{primedw}, where this is used as an ingredient in the proof of the
scale invariant spacing lemma.
We say that $((w,y),(w',y'))$ is a ``potential witness''
to the bad event $B$ if
$y,y' > e^{-\gamma}$,$w'<w$,
the Poisson process
$\{ (W_i,Y_i) \}$
has points at $(w,y)$ and $(w',y')$, and no points
$(W_k,Y_k)$ with $w'<W_k<w$, and $T_w + y + y' > (\log n)/2$,
and
\begin{equation}\label{witness}
\log(\frac{w}{w'}) \leq |\log(y/h(y))| + |\log(y'/h(y')|.
\end{equation}
Let $N_B$ denote the number of potential witnesses, and observe
that $B \subset \{ N_B > 0\}$.  Thus, it only remains to calculate
that $\e N_B = O(\log \log n/\log n)$; and in fact we will show
that it is $O(1/\log n)$.

Now $\e N_B$ is merely a four-fold integral, so the reader is
invited to take our claim at face value; but for those
declining our invitation, here are the details.
\ignore{
First note
$$
\e N_B =  \int_{-\infty <w'<w<\infty} \int_{y,y'>e^{-\gamma}}
e^{-wy} \ dw \ dy \ e^{-w'y'} \ dw' \ dy'
\log(w'/w) \
\p(y+y'+T_w > \log n/2) \
1(\log(\frac{w}{w'}) \leq |\log(y/h(y))| + |\log(y'/h(y')|).
$$
a little too long for one line!
}
Conditional
on having points $(W_i,Y_i)$ and $(W_j,Y_j)$ with $W_i=w,W_j=w'$,
the joint distribution of $Y_i,Y_j,T_w$ is that of three
independent exponentials, with means $1/w,1/w'$, and $1/w$
respectively.  Let $c_0 := \exp(2\sup \{ |\log(y/h(y))|\! \!:
 y >e^{-\gamma}\ \} )$,
so that for a potential witness, $w/w' \leq c_0$, and $Y_j$ lies below
an exponential with mean $c_0/w$.
This gives us, for the conditional probability, that
 $\p_{w,w'} (Y_i+Y_j+T_w > \log n / 2)$
$\leq 3 \p((c_0/w)S_1 > \log n / 6)$
$=3 \exp(-w \log n/(6c_0))$.

We need some monotonicity for the next simplification.
 from (\ref{1/q f}) we have that
$$
h(x) = x + O(x e^{-c\sqrt{x}})  \ \ \ \mbox{ as } x \rightarrow
\infty,
$$
so that for some constants $c_1,c_2>0$, for all $y > e^{-\gamma}$,
$|\log(h(y)/y)| < c_1  e^{-c_2 \sqrt{y}}$.  Thus we can relax
the notion of ``potential witness,'' replacing (\ref{witness})
with the condition
\begin{equation}\label{witness'}
\log(\frac{w}{w'}) \leq  r(y) + r(y'), \ \ \mbox{ where } r(y) =
 c_1  e^{-c_2 \sqrt{y}} .
\end{equation}
Write $N_R$ for the number of potential witnesses in this relaxed
sense,
so that $N_B \leq N_R$.
Now the indicator of the inequality (\ref{witness'})
is a {\em decreasing} function of $(y,y')$, while
the indicator $1(y+y'+t>\log n/2)$ is an
{\em increasing} function, so that we have negative
correlations (with respect to $Y_i,Y_j,$ and $T_w$, which are
conditionally independent given $w,w'$):
$$
\p_{w,w'} \left(Y_i+Y_j+T_w > \frac{\log n}{ 2}, \log(\frac{w}{w'}) \leq
r(Y_i)+r(Y_j)\right) \hfill
$$
$$\hfill \leq
\p_{w,w'} \left(Y_i+Y_j+T_w > \frac{\log n}{ 2}\right)
\p_{w,w'} \left( \log(\frac{w}{w'}) \leq
r(Y_i)+r(Y_j)\right).
$$
We use the monotonicity of $r(\cdot)$ again,  to justify
$$
\p_{w,w'} \left( \log(\frac{w}{w'}) \leq r(Y_i)+r(Y_j)\right)
\leq \p \left( \log(\frac{w}{w'}) \leq
r(\frac{S_i}{w})+r(\frac{S_j}{w})\right),
$$
where $S_i,S_j$ represent independent standard exponentials.

Recalling that $\{ W_k: k \in \BZ \}$ form a copy of the scale
invariant Poisson process $\X$, and $\p(\X \cap (w',w)=\emptyset) =
w'/w$,
$$
 \e N_R
\leq \int \! \! \int_{w'<w} \frac{dw'}{w'}\ \frac{dw}{w}
\ \frac{w'}{w}   \ 3 \exp(-w \log n/(6c_0)) \
\p \left( \log(\frac{w}{w'}) \leq
r(\frac{S_i}{w})+r(\frac{S_j}{w})\right).
$$
Recall further that for two consecutive points $W_j<W_i$
of the scale invariant Poisson process, conditional on
$W_i=w$ the distribution of $W_j$
is that of $Uw$, where $U$ uniformly distributed in $(0,1)$.
Thus the right hand side above is equal to
$$
\int_{w>0}
\frac{dw}{w} \ 3 \exp(-w \log n/(6c_0)) \
\p \left( \log(\frac{1}{U}) \leq
r(\frac{S_i}{w})+r(\frac{S_j}{w})\right).
$$
Since $-\log U$ is exponentially distributed, with density bounded
above by one, we have
$$
\e N_R \leq
\int_{w>0}
\frac{dw}{w} \ 3 \exp(-w \log n/(6c_0)) \
\e 2r(\frac{S_i}{w}) \
$$
$$
= 6 c_1 \int_{w>0} \exp(-w \log n/(6c_0))\  dw
\int_{y>0}       e^{-wy} e^{-c_2 \sqrt{y}} \ dy
$$
$$
< 6 c_1 \int_{y>0} e^{-c_2 \sqrt{y}} \ dy
 \int_{w>0} \exp(-w \log n/(6c_0))\  dw = O\left( \frac{1}{\log n}
 \right).
$$
This completes the proof of (\ref{dist to N 3}).
\hfill \qed



\begin{thebibliography}{99}
\parindent 0pt
\baselineskip
 12pt
\parskip=0pt

\bibitem{aldous mapping} Aldous, D. J. (1983)  Exchangeability and
related topics.  Springer, Lecture Notes in Mathematics,
vol. 1117.

\bibitem{primedw} Arratia, R. (1996) Independence of prime factors:
 total variation and Wasserstein metrics, insertions and deletions,
and the Poisson-Dirichlet process.  Draft, 70 pages.

\bibitem{dimacs} Arratia, R. (1998)
On the central role of scale invariant Poisson processes
on $(0,\infty)$.    Microsurveys in Discrete Probability
(Princeton, NJ, 1997), 21--41, (edited by D. Aldous and J. Propp)
DIMACS Ser. Discrete Math. Theoret. Comut. Sci., 41 (1998), Amer.
Math. Soc., Providence, RI.

\bibitem{tv3} Arratia, R., Barbour, A. D., and Tavar\'e, S. (1992)
Poisson process approximation for the Ewens Sampling Formula.  Ann.
Appl. Probab. {\bf 2}, 519-535.

\bibitem{abt} Arratia, R., Barbour, A. D., and Tavar\'e, S. (2001)
Logarithmic combinatorial structures.  Monograph, in preparation.
Draft version available at
{\tt http://www-hto.usc.edu/books/tavare/ABT/index.html}

\bibitem{AMS1} Arratia, R., Barbour, A. D., and Tavar\'e, S. (1997)
Random combinatorial structures and prime factorizations.  AMS Notices
{\bf 44}, 903-910.


\bibitem{abtlocal} Arratia, R., Barbour, A. D., and Tavar\'e, S. (1999)
On Poisson-Dirichlet limits for random decomposable combinatorial
structures.    Combin., Probab., Comput. {\bf 8} 193-208


\bibitem{T1} Arratia, R., Barbour, A. D., and Tavar\'e, S. (1999)
The Poisson-Dirichlet distribution and the
scale invariant Poisson process.
Combin., Probab.,  Comput. {\bf 8}, 407-416.

\bibitem{four couplings}  Arratia, R., Barbour, A. D., and Tavar\'e, S. (1999)
Expected $l_1$ distance in
Poisson-Dirichlet approximations
for random permutations: a tale of four couplings.
Preprint


\bibitem{AS} Arratia, R., and Stark, D. (1999)
A total variation distance
invariance principle for primes, permutations and
Poisson-Dirichlet. Preprint.

\bibitem{tv1} Arratia, R., and Tavar\'e, S. (1992) The cycle structure
of random permutations. Ann. Probab. {\bf 20}, 1567-1591.

\ignore{
\bibitem{tv2} Arratia, R., and Tavar\'e, S. (1992)
Limit theorems for combinatorial structures via discrete process
approximations.  Random Structures and Algorithms {\bf 3}, 321-345.

\bibitem{tvg} Arratia, R., and Tavar\'e, S. (1994) Independent process
approximation for random combinatorial structures.  Advances in Math.
{\bf 104}, 90-154.
}

\bibitem{course} Arratia, R., and Tavar\'e, S. (1996) Random partitions,
permutations, and primes.  Course lecture notes for Math 533. University
of Southern California, Department of Mathematics.

\bibitem{bach} Bach, E. (1985) Analytic Methods in the Analysis
and Design of Number-theoretic Algorithms.  The MIT Press.

\bibitem{barban} Barban, M. B., and Vinogradov, A. I. (1964)
On the number theoretic basis of the probabilistic
theory of numbers.  Dokl. Akad. Nauk SSSR {\bf 154}, 495-496.


\bibitem{statsci2} Barbour, A. D. (1990)
Comment on ``Poisson approximation and the Chen-Stein method''.
Statistical Science
{\bf 5}, 425-427.

\bibitem{ulam}
Beyer, W., Stein, M., Smith, T., and Ulam, S. (1972)
Metrics in Biology, an Introduction. Los Alamos Scientific Laboratory
report LA-4973.
Reprinted in Analogies between Analogies; the mathematical reports of S.
M. Ulam
and his Los Alamos collaborators, 1990 University of California
Press.

\bibitem{billing large} Billingsley, P. (1972) On the distribution of
large prime factors.  Period. Math. Hungar. {\bf 2}, 283-289.

\bibitem{billingsley-monthly} Billingsley, P. (1973)
Prime numbers and Brownian motion.
Amer. Math. Monthly {\bf 80}, 1099-1115.

\bibitem{billingsley} Billingsley, P. (1974) The 1973 Wald Memorial
Lecture: The probability theory of additive arithmetic functions.
 Ann. Probab. {\bf 2}, 749-
791.


\ignore{\bibitem{buch} Buchstab, A. A. (1937) An asymptotic estimation of a
general number-theoretic function.  Mat. Sbornik (2) {\bf 44},
1239-1246.}

\bibitem{dp86} Diaconis, P., and Pitman, J.  (1986) Permutations, record
values and random measures. Unpublished lecture notes, Dept. Statistics,
U. C. Berkeley.

\bibitem{dickman} Dickman, K. (1930)  On the frequency of numbers
containing prime factors of a certain relative magnitude.  Ark. Math.
Astr. Fys. {\bf 22}, 1-14.

\bibitem{DG} Donnelly, P., and Grimmett, G. (1993) On the asymptotic
distribution of large prime factors. J. London Math. Soc. (2) {\bf 47},
395-404.

\bibitem{dudley} Dudley, R. M. (1989) Real Analysis and Probability.
Wadsworth and Brooks/Cole.

\bibitem{elliott} Elliott, P. D. T. A. (1979) Probabilistic Number Theory
I,   Springer Grundlehren der math. Wissenschaften 239.

\bibitem{galambos} De Koninck, J.-M., and Galambos, J. (1987) The
intermediate prime divisors of integers.  Proc. Amer. Math. Soc. {\bf
101}, 213-216.

\bibitem{feller45} Feller, W. (1945)  The fundamental limit
theorems in probability.  Bull. Amer. Math. Soc., {\bf 51},
800-832.

\bibitem{divisors} Hall, R.R., and Tenenbaum, G. (1988)
Divisors.  Cambridge Tracts in Mathematics {\bf 90},
Cambridge.

\bibitem{hansen} Hansen, J. C. (1994). Order
statistics for decomposable combinatorial structures.
Random Structures and  Algorithms {\bf 5}, 517-533.

\bibitem{ignatov81} Ignatov, Z. (1981)
Point processes generated by order statistics and their
applications. In Point processes and queuing problems (Colloq., Keszthely,
             1978),109--116, North-Holland, Amsterdam-New York

\ignore{\bibitem{ignatov} Ignatov, Z.
(1982) On a constant arising in the asymptotic theory
of symmetric groups, and on Poisson-Dirichlet measures. Theor. Prob.
Appl. {\bf 27}, 136-147.

\bibitem{kac} Kac, M. (1959) Statistical Independence in Probability, Analysis,
and Number Theory.  Carus math. monograph 12, MAA, Wiley.
}

\bibitem{kingman77} Kingman, J.F.C. (1977)
The population structure associated with the Ewens sampling
  formula.
Theor. Pop. Biol. {\bf 11}, 274--283.

\bibitem{kingman} Kingman, J.F.C. (1993) Poisson Processes.  Oxford
Science Publications

\bibitem{knuth} Knuth, D., and Trabb Pardo, L. (1976) Analysis of a
simple factorization algorithm. J. Theoret. Comput. Sci. {\bf 3},
321-348.

\ignore{\bibitem{knuth2e} Knuth, D. (1981) The Art of Computer Programming, v.
2, {\em second edition only}.  Addison Wesley.}

\bibitem{time warps} Kruskal, J., and Sankoff, D. (1983) Time warps,
string edits, and macromolecules:
the theory and practice of sequence comparison.  Addison-Wesley.

\bibitem{kub} Kubilius, J. (1962, translated 1964) Probabilistic Methods
in the Theory of Numbers.   AMS Translations of Mathematical Monographs,
{\bf 11}.

\bibitem{kurtz} Kurtz, T. (1978) Strong approximation theorems for
density dependent Markov chains.   Stochastic Procs. Appls. {\bf 6},
223-240.

\bibitem{levenstein} Levenstein, V. (1965) Binary codes capable of
correction deletions,
insertions, and reversals.  Cybernetics and Control Theory {\bf
10} 707-710 (1996); Russian Doklady Akademii Nauk SSR {\bf 163}
845-848.

\bibitem{fractale} Mend\`es France, M., and Tenenbaum, G. (1993)
Syst\`emes de points, diviseurs, et structure fractale.
Bull. Soc. Math. de France {\bf 121}, 197-225.

\bibitem{philipp} Philipp, W. (1973)
Arithmetic functions and Brownian  motion.
Proc. Sympos. Pure Math {\bf 24} 233-246.

\bibitem{pityor97} Pitman, J., and Yor, M. (1997)
The two-parameter Poisson-Dirichlet distribution
derived from a stable subordinator.  Ann. Probab. {\bf 25}, 855-900.

\bibitem{renyi turan} R\'enyi, A., and Tur\'an, P. (1957) On a
theorem of Erd\H{o}s-Kac.  Acta Arith. {\bf 4}, 71-84.

\bibitem{rio2} Rio, E. (1994) Local invariance principles and their
application to density estimation.
Probab. Th. Related Fields {\bf 98}, 21-45.

\bibitem{rs} Rosser, J. B., and Schoenfeld, L. (1962) Approximate
formulas for some functions of prime numbers.  Illinois J. Math. {\bf
6}, 64-94.

\bibitem{stark th} Stark, D. (1997)
Explicit limits of total variation distance in
 approximations of random logarithmic combinatorial
assemblies by related Poisson processes. Combin., Probab., Comput. {\bf 6}, 87-105.

\bibitem{tenenbaum} Tenenbaum, G. (1995) Introduction to analytic and
probabilistic number theory. Cambridge studies in advanced mathematics,
{\bf 46}.  Cambridge University Press.

\bibitem{crible}  Tenenbaum, G. (1999).
Crible d'\'Eratosth\`ene et mod\`ele de Kubilius,
Number theory in progress, Vol. 2 (Zakopane-Ko\'scielisko, 1997),
1099--1129, de Gruyter, Berlin, 1999.


\bibitem{rate bill} Tenenbaum, G. (2000).
A rate estimate in Billingsley's theorem for the
size distribution of large prime factors.
Quart. J. Math. {\bf 51}, 387-405.

\bibitem{vershik1} Vershik, A.M., and Shmidt, A.A. (1977) Limit measures
arising in the theory of groups, I, Theory Probab. Appl. {\bf 22}, 79-
85.

\bibitem{vershik2} Vershik, A.M., and Shmidt, A.A. (1978) Limit measures
arising in the theory of groups, II, Theory Probab. Appl. {\bf 23}, 36-
49.


\end{thebibliography}
\end{document}